\title{Simplicity criteria for \nagyrev{\'etale} groupoid $C^*$-algebras}
\author{Danny Crytser}
\address{Department of Mathematics, St. Lawrence University, 23 Romoda Drive,
Canton, NY 13617, USA}
\email{danny.crytser@gmail.com}
\author{Gabriel Nagy}
\address{Department of Mathematics, Kansas State University, 1228 N. 17th Street, Manhattan, KS 66506, USA}
\email{nagy@math.ksu.edu}
\subjclass[2010]{Primary: 46L35; Secondary: 47L65} 
\def\scol#1{\ifx{#1}{000}
a
\else
; #1
\fi}
\newcounter{myfactnr}
\newcounter{appresult}
\numberwithin{appresult}{section}
\theoremstyle{plain}
\newtheorem{theorem}{Theorem}[section]
\newtheorem{proposition}[theorem]{Proposition}
\newtheorem{lemma}[theorem]{Lemma}
\newtheorem{applemma}[appresult]{Lemma}
\newtheorem{apptheorem}[appresult]{Theorem}
\newtheorem{corollary}[theorem]{Corollary}
\newtheorem{myfact}[myfactnr]{\sc Fact}
\newtheorem*{claim}{\sc Claim}
\newtheorem*{convention}{Convention}
\theoremstyle{definition}
\newtheorem{definition}[theorem]{Definition}
\newtheorem{definitions}[theorem]{Definitions}
\newtheorem{example}[theorem]{Example}
\theoremstyle{remark}
\newtheorem{remark}[theorem]{\sc Remark}
\newtheorem*{note}{Note}
\newtheorem*{notation}{Notation}
\newtheorem*{notations}{Notations}
\newtheorem*{mycomment}{Comment}
\newcounter{oldsec}
\newcommand\nagyrev[1]{\textcolor{black}{#1}}
\newenvironment{nagyrevenv}{\color{black}}{\color{black}}
\newcommand\nagysecondrev[1]{\textcolor{black}{#1}}
\begin{document}

\begin{abstract}
We develop a framework suitable for obtaining simplicity criteria for 
 reduced $C^*$-algebras of Hausdorff \'etale groupoids. This is based on the study of certain
non-degenerate $C^*$-subalgebras (in the case of groupoids, the $C^*$-algebra of the interior isotropy bundle), 
for which one can control (non-unique) state extensions to the ambient $C^*$-algebra. As an application, we give simplicity criteria for reduced crossed products $C_0(Q)\rtimes_\text{red} G$ by discrete groups. 
\end{abstract}

\maketitle

\section*{Introduction}

The task of determining which $C^*$-algebras are simple (in the sense of having no proper, non-trivial norm-closed two-sided ideals) has interested operator algebraists for decades. We follow the approach that has focused on classes of $C^*$-algebras defined from topological, combinatorial, or dynamic information. \nagyrev{There} is a simplicity criterion for graph $C^*$-algebras (\cite[Cor. 3.11]{KPR})\nagyrev{; this criterion was generalized to $k$-graphs} (\cite[Prop. 4.8]{KP}); there is a simplicity criterion for full \'etale groupoid $C^*$-algebras (\cite[Thm 5.1]{BCFS}); and there is a well-known sufficient condition for a group crossed product $C^*$-algebra to be simple 
(\cite{AS}). 
In each of these situations, there is a certain $C^*$-subalgebra $B \subset A$, 
such that simplicity of $A$ is equivalent to the non-existence of non-trivial proper ideals of $B$ which are \emph{invariant} in some suitable fashion. 
In a recent series of papers (\cite{NR}, \cite{NR2}, \cite{BNR}, \cite{BNRSW}) there has been a concerted push to understand the {\em uniqueness\/} property for $C^*$-inclusions for graph, $k$-graph, and groupoid $C^*$-algebras. The perspective that has emerged from these investigations shows that the question of uniqueness is closely tied up with the existence of sufficiently many states on $C^*$-subalgebras that extend uniquely to the ambient $C^*$-algebra.
 In particular, in (\cite[Thm. 3.1]{BNRSW}) it is proved that, for an \'etale groupoid with mild additional requirements, a representation of $C^*_\text{red}(\mathcal{G})$ is faithful if and only if its 
restriction to $C^*_\text{red}(\operatorname{IntIso}(\mathcal{G}))$ is faithful.

In this paper we propose an approach that aims to unify all these results, by identifying such essential $C^*$-inclusions in a manner that circumvents the unique state extension issues that were previously employed.
In particular, in Section \ref{rel-dom-sec}, we extend the abstract uniqueness theorem of \cite{BNRSW} by developing the  notions of \emph{relative dominance} (for ideals) and  \emph{subordination} (for
families of states), the main result being the Relative Dominance theorem (\ref{rel-abs-uniqueness}), which explains how the relative dominance (with respect to a $C^*$-subalgebra $B\subset A$) for ideals $J \triangleleft A$ 
is equivalent to the existence of certain sets of states on $B$. 

The remainder of the paper (of which the first two Sections review background information) is organized as follows. Section \ref{central-sec}  contains several general results pertaining to central inclusions
$C_0(Q)\subset A$, which are needed in the main application to \'etale group bundles in the subsequent section. Our analysis pays particular attention to points $q\in Q$ of 
\nagysecondrev{\em continuous reduction}, relative to a conditional expectation $\mathbf{E}:A\to C_0(Q)$. 
\nagyrev{In Section \ref{bundles-sec} we apply this methodology to the central 
 inclusion $C_0(\mathcal{G}^{(0)}\subset C^*_\text{red}(\mathcal{G})$, for 
an \'etale group bundle $\mathcal{G}$.} 

In Section \ref{min-sec} we provide a conceptual framework for {\em minimality\/} of $C^*$-inclusions, 
which is used in Proposition \ref{reg-simple}, where a general simplicity criterion is given. 
All these results are \nagyrev{applied to} groupoid $C^*$-algebras in Section \ref{grp-sec},
where we characterize simplicity for reduced groupoid $C^*$-algebras in terms of the interior isotropy subgroupoid
(Proposition \ref{red-simple-prop}). The main results in this section are Theorem \ref{simplicity-for-red}, which gives
a sufficient condition for simplicity of the reduced $C^*$-algebra of an \'etale
groupoid, along with Corollaries  \ref{red-simple-converse-other} and  Corollary \ref{red-simple-converse-amenable}, which provide some necessary conditions. Along the way (Theorem \ref{full-simple-thm}), we also recover the characterization of simplicity for the full groupoid $C^*$-algebra that previously appeared in \cite[Thm. 5.1]{BCFS}.

The paper concludes with  Section \ref{cross-prod-sec}, where we specialize our preceding results to \emph{transformation groupoids}, those which correspond to actions of discrete groups on locally compact Hausdorff spaces. We review (a slightly modified, but equivalent) construction of the transformation groupoid $C^*$-algebra $C_0(Q) \rtimes_r G$. With the help of
Theorem \ref{t-g-cont-thm}, the simplicity criterion from Corollary \ref{simplicity-for-cpred} 
(which sharpens \cite[Thm. 14(1)]{Ozawa})
has a nicer formulation than its sibling Theorem \ref{simplicity-for-red}.

In the Appendix we give technical results for embedding the (full) $C^*$-algebra of an open subgroupoid into the full $C^*$-algebra of an \'etale groupoid. 

\section{Preliminaries on Essential Inclusions}

\begin{notations}
Given a $C^*$-algebra $A$, the notation $J\unlhd A$ signifies that $J$ is a closed two-sided ideal in $A$. The instance when $J\unlhd A$ and $J\neq A$ is indicated by the notation $J\lhd A$. 

Given $C^*$-algebras $A$, $B$, to any positive linear map $\psi: A \to B$, one associates the following ideals:
\begin{itemize}
\item $L_\psi=\{a\in A\,:\,\psi(a^*a)=0\}$, the largest closed {\em left\/}  ideal in $A$, on which $\psi$ vanishes;
\item $K_\psi = \{a \in A: \psi(xay) =0,  \forall x,y \in A \}$, the largest closed {\em two-sided\/} ideal in $A$,
on which $\psi$ vanishes.
\end{itemize}
(In the scalar case $B=\mathbb{C}$ and $\psi\neq 0$, $K_\psi$ is nothing else but the kernel of the GNS representation $\Gamma_\psi$.) 

More generally, if $\Psi = \{\psi_i:A\to B_i\}_{i \in I}$ is a collection of positive linear maps between $C^*$-algebras, we let $L_\Psi=\bigcap_i L_{\psi_i}$ and $K_\Psi=\bigcap_i K_{\psi_i}$.
\end{notations}

\begin{definition}\label{ess-faithful-def}
A collection $\Psi = \{\psi_i : A \to B_i\}_{i \in I}$, as above, is said to be {\em jointly essentially faithful}, if $K_\Psi=\{0\}$.
This is a weaker notion than {\em joint (honest) faithfulness}, which requires
 $L_\Psi=\{0\}$.
(In the case of single maps, the term ``joint'' is omitted.)
%
\end{definition}


\begin{notations}
For a $C^*$-algebra $A$, we denote its \emph{state space\/} by $S(A)$, and we denote 
its \emph{pure state space\/} by $P(A)$.
\end{notations}

\begin{definition}\label{def-non-deg}
A $C^*$-inclusion $B\subset A$ is said to be \emph{non-degenerate}, if $B$ contains an approximate unit for $A$
(and consequently, {\em every\/} approximate unit for $B$ is also an approximate unit for $A$). 
More generally, a  $*$-homomorphism $\Phi:B\to A$ is said to be non-degenerate, if $\Phi(B)\subset A$ is a non-degenerate $C^*$-inclusion.
\end{definition}
\begin{remark}\label{CH}
Using the Cohen-Hewitt Factorization Theorem, non-degeneracy for a $C^*$-inclusion $B\subset A$ is equivalent to the equality
$A=BAB$, i.e. the fact that any $a\in A$ can be written as a product $a=b_1a'b_2$, with $a'\in A$, $b_1,b_2\in B$.
\end{remark}

With an eye on some further developments in Section \ref{central-sec}, the Remark below collects several useful features concerning multipliers.

\begin{remark} \label{MBsubMA-rem}
A non-degenerate inclusion $B\subset A$ always gives rise to a unital (thus non-degenerate) $C^*$-inclusion 
\begin{equation}
M(B)\subset M(A),
\label{MBsubMA1}
\end{equation}
by associating to any 
$m\in M(B)$ (and using Remark \ref{CH}), the left and right multiplication operators
$L_m:A\ni b_1ab_2\longmapsto (mb_1)ab_2\in A$ and
$R_m:A\ni b_1ab_2\longmapsto b_1a(b_2m)\in A$. (The parentheses surround elements in $B$.)
Equivalently, if $(u_\lambda)_\lambda\subset B$ is some approximate unit for $A$, then $L_ma=\lim_{\lambda,\mu}(u_\lambda m u_\mu)a$ and
$R_ma=\lim_{\lambda,\mu} a(u_\lambda m u_\mu)$.

The same argument shows that any non-degenerate $*$-homomorphism  $\Phi:B\to A$ extends uniquely to to a
unital $*$-homomorphism \
$$M\Phi:M(B)\to M(A),$$ 
which is continuous in the strict topology.

With the help of the inclusion \eqref{MBsubMA1}, the non-degeneracy of $B\subset A$ yields the following useful identifications
\begin{align}
\label{MBsubMA2}
M(B)&=\{m\in M(A)\,:\,mB\subset B\}=\{m\in M(A)\,:\,Bm\subset B\};\\
\label{MBsubMA3}
B&=M(B)\cap A.
\end{align}
The inclusion of the second set in \eqref{MBsubMA2} in the third one (and by symmetry, their equality) can justified again using an approximate unit $(u_\lambda)_\lambda\subset B$ for $A$, as follows. If $m\in M(A)$ satisfies
$mB\subset B$, then for any
$b\in B$ we have $bm=\lim_\lambda (bm)u_\lambda=\lim_\lambda b(mu_\lambda)\in B$.
The equality \eqref{MBsubMA3}, now follows immediately from \eqref{MBsubMA2}, because if $a\in M(B)\cap A$, then
$a=\lim_\lambda au_\lambda\in B$.
\end{remark}

\begin{notation}
A non-degenerate $C^*$-inclusion $B\subset A$ yields a restriction map
$$\mathbf{r}_{A\downarrow B}: S(A)\ni\varphi \longmapsto \varphi|_B\in S(B),$$ 
which (using the Hahn-Banach Theorem) is surjective.
\end{notation}
\begin{definition}
Given a non-degenerate inclusion $B\subset A$, and some subset $\Phi\subset S(B)$, we call a subset
$\Sigma\subset S(A)$ an {\em $A$-lift of $\Phi$}, if
$\mathbf{r}_{A\downarrow B}\big|_{\Sigma}$ is a bijection of $\Sigma$ onto $\Phi$. 
Equivalently, an $A$-lift is the range of a cross-section
$\boldsymbol{\sigma}:\Phi\to S(A)$ of $\mathbf{r}_{A\downarrow B}$ (i.e. a right inverse of 
$\mathbf{r}_{A\downarrow B}$ over $\Phi$), which allows us to enumerate
$\Sigma=\{\boldsymbol{\sigma}(\varphi)\}_{\varphi\in\Phi}$.

In the case when $\Phi\subset P(B)$, we will can an $A$-lift $\Sigma \subset P(A)$ a
a \emph{pure $A$-lift}. 
\end{definition}

\begin{remark}\label{extension-remark}
A standard application of Krein-Milman Theorem shows that any set $\Phi \subset P(B)$ always
has pure lifts. Furthermore, if $\Phi\subset P(B)$ has a {\em unique pure\/} $A$-lift $\Sigma$, then
$\Sigma$ is in fact the only possible $A$-lift, meaning that every $\varphi\in \Phi$ has a {\em unique},
extension to a state $\check\varphi\in S(A)$. In this vein, the ``Abstract'' Uniqueness Theorem from \cite[Thm 3.2]{BNRSW} has the following formulation.
\end{remark}

\begin{theorem}\label{abs1bnrsw} (cf. \cite[Thm. 3.2]{BNRSW})
Assume $B\subset A$ is a non-degenerate $C^*$-inclusion, and $\Phi\subset S(B)$ is a collection which has a unique $A$-lift
$\Sigma=\{\boldsymbol{\sigma}(\varphi)\}_{\varphi\in\Phi}\subset S(A)$. 
If $\Sigma$ is jointly essentially faithful, 
then:
\begin{itemize}
\item[{\sc (e)}] the only ideal $L\lhd A$, that satisfies $B\cap L=\{0\}$, is the zero ideal $L=\{0\}$.
\end{itemize}
{\rm A non-degenerate inclusion $B\subset A$ satisfying condition {\sc (e)} is called {\em essential}.}
\end{theorem}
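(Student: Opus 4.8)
The plan is to prove the contrapositive in the strong form: take an arbitrary ideal $L\unlhd A$ with $B\cap L=\{0\}$ and show $L=\{0\}$. Since $\Sigma$ is jointly essentially faithful, we have $K_\Sigma=\bigcap_{\varphi\in\Phi}K_{\boldsymbol{\sigma}(\varphi)}=\{0\}$, so it suffices to establish the inclusion $L\subseteq K_{\boldsymbol{\sigma}(\varphi)}$ for every $\varphi\in\Phi$. Recalling that $K_{\boldsymbol{\sigma}(\varphi)}=\{a\in A:\boldsymbol{\sigma}(\varphi)(xay)=0,\ \forall x,y\in A\}$, and that $L$ is a two-sided ideal (so $xay\in L$ whenever $a\in L$), this inclusion will follow as soon as I prove the single scalar statement that $\boldsymbol{\sigma}(\varphi)$ vanishes on $L$.

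The heart of the argument is to produce, for each fixed $\varphi\in\Phi$, a state on $A$ that extends $\varphi$ and simultaneously annihilates $L$; uniqueness of the $A$-lift will then force that state to coincide with $\boldsymbol{\sigma}(\varphi)$. To build it, I pass to the quotient $\pi:A\to A/L$. Because $B\cap L=\{0\}$, the restriction $\pi|_B$ is an injective $*$-homomorphism, hence an isometric isomorphism of $B$ onto $\bar B:=\pi(B)$. If $(u_\lambda)_\lambda\subset B$ is an approximate unit for $A$ (which exists by non-degeneracy), then $(\pi(u_\lambda))_\lambda\subset\bar B$ is an approximate unit for $A/L$, so the inclusion $\bar B\subset A/L$ is again non-degenerate. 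Transporting $\varphi$ through $(\pi|_B)^{-1}$ yields a state $\varphi'\in S(\bar B)$, which extends, by surjectivity of the restriction map for the non-degenerate inclusion $\bar B\subset A/L$, to some $\bar\psi\in S(A/L)$. Setting $\tilde\psi:=\bar\psi\circ\pi$ produces a state on $A$: it is positive, and $\tilde\psi(u_\lambda)=\bar\psi(\pi(u_\lambda))\to 1$, so $\|\tilde\psi\|=1$. By construction $\tilde\psi|_B=\varphi$ and $\tilde\psi|_L=0$.

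Finally I invoke uniqueness. By definition, the fact that $\Phi$ has a \emph{unique} $A$-lift means precisely that every $\varphi\in\Phi$ has exactly one state extension to $A$, namely $\boldsymbol{\sigma}(\varphi)$ (cf. Remark \ref{extension-remark}). Since $\tilde\psi$ is such an extension, $\tilde\psi=\boldsymbol{\sigma}(\varphi)$, and therefore $\boldsymbol{\sigma}(\varphi)$ vanishes on $L$. As explained in the first paragraph this gives $L\subseteq K_{\boldsymbol{\sigma}(\varphi)}$ for each $\varphi\in\Phi$, whence $L\subseteq K_\Sigma=\{0\}$, which is the claim.

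I expect the only genuinely delicate point to be the verification that the constructed functional $\tilde\psi$ is an honest state of $A$ restricting to $\varphi$ on $B$; everything hinges on transferring an approximate unit of $A$ (lying in $B$) down to the quotient so that $\bar B\subset A/L$ inherits non-degeneracy, which is what legitimizes both the extension step and the norm-one conclusion. Once that is in place, the interplay of Hahn--Banach state extension with the uniqueness hypothesis closes the argument with no further computation.
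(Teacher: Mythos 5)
Your proof is correct and follows essentially the same route as the paper: it is the specialization to $J=\{0\}$ of the paper's argument for Theorem \ref{rel-abs-uniqueness} (the implication (ii)$\Rightarrow$(i)), namely passing to the quotient $\pi:A\to A/L$, using non-degeneracy of $\pi(B)\subset A/L$ to extend each $\varphi\in\Phi$ to a state of $A$ vanishing on $L$, and concluding $L\subset K_\Sigma=\{0\}$. The only cosmetic difference is that where the paper's general hypothesis quantifies over \emph{all} $A$-lifts, you use the uniqueness assumption (via Remark \ref{extension-remark}) to identify your constructed extension $\tilde\psi$ with $\boldsymbol{\sigma}(\varphi)$ --- which is precisely how Theorem \ref{abs1bnrsw} falls out of Corollary \ref{mother} in the paper.
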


In the spirit of the above theorem, we conclude this section with several technical results, which will be useful to us later, the first of which (stated only as a Remark) is very elementary.

\begin{remark}\label{embed-rem}
For a $*$-homomorphism between (non-zero) $C^*$-algebras $\pi:A\to B$, the following conditions are equivalent:
\begin{itemize}
\item[(i)] $\pi$ is injective;
\item[(ii)] there exists a non-empty set of positive linear maps $\Sigma=\{B\xrightarrow{\sigma_i} D_i\}_{i\in I}$, such that
the set of linear positive maps $\Sigma^\pi=\{A\xrightarrow{\sigma_i\circ\pi} D_i\}_{i\in I}$ is jointly essentially faithful.
\end{itemize}
Indeed, ``$(ii)\Rightarrow (i)$'' follows from the observation that
$\text{Ker}\,\pi\subset \nagyrev{K_{\Sigma^\pi}}$, while the
implication `` $(i)\Rightarrow (ii)$'' follows by letting $\Sigma=S(B)$ and using the fact that
 every state on the $C^*$-subalgebra $\pi(A)\subset B$ can be extended to a state on $B$, thus
(i) implies
$S(B)^\pi\supset S(A)$.
\end{remark}

\begin{lemma}\label{extend-lemma}
Assume $A$, $B$ are $C^*$-algebras, $A_0\subset A$ is a dense $*$-subalgebra, and $\pi_0:A_0\to B$ is a $*$-homomorphism. The following conditions are equivalent.
\begin{itemize}
\item[(i)] $\pi_0$ extends to a (necessarily unique) $*$-homomorphism $\pi:A\to B$;
\item[(ii)] there exists a jointly faithful set of positive linear maps
$\Sigma=\{B\xrightarrow{\sigma_i}D_i\}_{i\in I}$, such that all maps in the set
$\Sigma^{\pi_0}=\{A_0\xrightarrow{\sigma_i\circ\pi}D_i\}_{i\in I}$ are bounded (in the norm from $A$); equivalently, all maps in $\Sigma^{\pi_0}$ extend to linear positive maps on $A$.
\end{itemize}
\end{lemma}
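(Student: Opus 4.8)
The plan is to prove the substantive implication (ii)$\Rightarrow$(i) by showing that the hypotheses force $\pi_0$ to be \emph{contractive} for the norm inherited from $A$; once $\pi_0$ is bounded it extends uniquely by continuity to a map $\pi\colon A\to B$, and passing the identities $\pi_0(aa')=\pi_0(a)\pi_0(a')$ and $\pi_0(a^*)=\pi_0(a)^*$ to limits shows that $\pi$ is a $*$-homomorphism, while density of $A_0$ forces uniqueness. The converse (i)$\Rightarrow$(ii) is immediate: one takes $\Sigma=S(B)$, which is jointly faithful because states collectively detect positive elements (so $L_{S(B)}=\{0\}$), and then each $\sigma\circ\pi_0=(\sigma\circ\pi)|_{A_0}$ is the restriction of a positive linear functional on $A$, hence bounded and extendable.

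For the contractivity, I would first reduce to self-adjoint elements, since $\|\pi_0(a)\|_B^2=\|\pi_0(a^*a)\|_B$ and $\|a\|_A^2=\|a^*a\|_A$, so it suffices to prove $\|\pi_0(a)\|_B\le\|a\|_A$ for $a=a^*\in A_0$. Write $y=\pi_0(a)=y^*\in B$, put $\alpha=\|a\|_A$, $\beta=\|y\|_B$, and let $\tau_i\colon A\to D_i$ be the bounded positive extension of $\sigma_i\circ\pi_0$ supplied by (ii). Assume, for contradiction, that $\beta>\alpha$.

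The heart of the argument exploits the discrepancy between the spectra of $a$ in $A$ and of $y$ in $B$. One has $\operatorname{sp}_A(a)\subseteq[-\alpha,\alpha]$, whereas $\operatorname{sp}_B(y)$ contains a point $\lambda_0$ with $|\lambda_0|=\beta>\alpha$. I would then choose a continuous $f\ge 0$ with $f(\lambda_0)=1$ that vanishes on $[-\alpha,\alpha]$ (in particular $f(0)=0$, so that $f(a)$ and $f(y)$ are defined even in the non-unital case). Then $f(y)\ge 0$ with $\|f(y)\|_B\ge f(\lambda_0)=1$, while $f(a)=0$ because $f$ vanishes on $\operatorname{sp}_A(a)$. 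Approximating $f$ uniformly on $[-\beta,\beta]$ by polynomials $p_m$ without constant term, I would play the two functional calculi against each other: on the $B$-side $p_m(y)\to f(y)$, so $\sigma_i(p_m(y))\to\sigma_i(f(y))$; on the $A$-side $p_m(a)\to f(a)=0$ in $A$, and since $\sigma_i(p_m(y))=\sigma_i(\pi_0(p_m(a)))=\tau_i(p_m(a))$ with $\tau_i$ continuous, one also gets $\sigma_i(p_m(y))\to 0$. Hence $\sigma_i(f(y))=0$ for every $i$, and joint faithfulness, applied to the positive element $f(y)$, yields $f(y)=0$, contradicting $\|f(y)\|_B\ge 1$. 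This forces $\beta\le\alpha$ and completes the estimate.

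The main obstacle is exactly this bridge between the two norms: a $*$-homomorphism out of the incomplete algebra $A_0$ carries no a priori bound, and joint faithfulness is only a qualitative (zero-detecting) condition rather than a quantitative norm estimate. The decisive point is that boundedness of each $\sigma_i\circ\pi_0$ in the $A$-norm (equivalently, its extendability to $\tau_i$ on $A$) is precisely what transports the vanishing $p_m(a)\to 0$ in $A$ into $\sigma_i(f(y))=0$ in $D_i$; joint faithfulness then converts this whole family of vanishings into the single contradiction $f(y)=0$.
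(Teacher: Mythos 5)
Your proof is correct, but it follows a genuinely different route from the paper's. The paper proves (ii)$\Rightarrow$(i) by a chain of reductions: after rescaling so that $\sup_i\max\{\|\sigma_i\|,\|\sigma_i\circ\pi_0\|\}<\infty$, it bundles the family $\Sigma$ into a single faithful positive map $b\mapsto(\sigma_i(b))_{i\in I}\in\prod_i D_i$, then reduces to the case where $A$ is singly generated (hence separable), so that $D$ may be taken separable with a faithful state, and finally to a single faithful \emph{scalar} functional $\sigma$; the contractivity estimate then drops out of the classical formula $\|b\|=\lim_{n\to\infty}\sigma(b^n)^{1/n}$ for faithful positive functionals, applied to $b=\pi_0(a_0)$ with $a_0=x^*x$. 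You instead argue directly with the whole family: assuming $\|\pi_0(a)\|>\|a\|$ for some $a=a^*\in A_0$, you manufacture a bump function $f\geq 0$ vanishing on $[-\alpha,\alpha]\supseteq\operatorname{sp}_A(a)$ with $f(\lambda_0)=1$ at a spectral point of $y=\pi_0(a)$ beyond $\alpha$, approximate $f$ by constant-term-free polynomials $p_m$ (so $p_m(a)\in A_0$ and $\pi_0(p_m(a))=p_m(y)$), and transport $p_m(a)\to f(a)=0$ through the bounded extensions $\tau_i$ to get $\sigma_i(f(y))=0$ for all $i$; joint faithfulness, applied to $g=f(y)^{1/2}$ (so that $g^*g=f(y)$ and $g\in L_\Sigma=\{0\}$), kills $f(y)$, contradicting $\|f(y)\|\geq 1$ from the spectral mapping theorem. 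Both arguments ultimately detect the same phenomenon -- faithfulness sees the spectral excess of $\pi_0(a)$ over $a$; the paper's $\sigma(b^n)^{1/n}$ formula is a spectral-radius detector in disguise, while your $f$ isolates the excess spectrum explicitly. What your route buys is self-containedness: no rescaling, no passage to a product $C^*$-algebra, no separability or singly-generated reduction, and no faithful state on $D$ -- steps the paper treats rather briskly. What the paper's route buys is that, once the reductions are accepted, the estimate is a one-line consequence of a standard fact. Your treatment of (i)$\Rightarrow$(ii) (taking $\Sigma=S(B)$) and the extension of the contractive $\pi_0$ to a $*$-homomorphism by continuity and density coincide with the paper's.
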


\begin{proof}
The implication ``$(i)\Rightarrow (ii)$'' is obvious, by taking $\Sigma$ to be the whole state space $S(B)$ of $B$. For the implication ``$(ii)\Rightarrow (i)$,'' we first observe that, since joint faithfulness is preserved  by scaling the $\sigma_i$'s, we can assume that
$\sup_{i\in I}\max\left\{\|\sigma_i\|,\,\|\sigma_i\circ\pi_0\|\right\}<\infty$. \nagyrev{Under this additional assumption, the joint faithfulness of $\Sigma$ yields a single faithful positive linear
map $B\ni b\longmapsto (\sigma_i(b))_{i\in I}\in\prod_{i\in I}D_i$; in other words, we can assume that
$\Sigma$ is a singleton $\{B\xrightarrow{\sigma}D\}$}. Since the desired condition reads
$\|\pi_0(x^*x)\|\leq\|x^*x\|$, $\forall\,x\in A_0$, we can assume $A_0$ and $A$  are singly generated by a single
positive element $a_0$. In other words, in condition (ii) we can assume both $A$ and $B$ are separable (also abelian, if we want). This means that for our singleton set $\Sigma=\{B\xrightarrow{\sigma}D\}$ we can also assume $D$ is separable, so it has a faithful state $\psi$, which will then make $\psi\circ\sigma:B\to\mathbb{C}$ faithful as well, so in fact we can assume $D=\mathbb{C}$. With all these reductions in mind, our implication reduces to the following statement: {\em with $A_0$, $A$, $B$ and $\pi_0$ as above, if  $\sigma$ is a faithful state on $B$, such that the composition $\sigma\circ\pi_0:A_0\to\mathbb{C}$ is bounded (thus it extends to a positive linear functional $\phi:A\to\mathbb{C}$), then
$\|\pi_0(a_0)\|\leq\|a_0\|$, for any element $a_0\in A_0$, which is of the form $a_0=x^*x$ with $x\in A_0$}. 
(Thus $a_0$ is positive in $A$, and $\pi_0(a_0)$ is positive in $B$, as well). However, this implication follows immediately from the well known fact that, whenever $\sigma:B\to\mathbb{C}$ is a faithful positive linear functional, for any positive element $b\in B$, one has $\|b\|=\lim_{n\to\infty}\sigma(b^n)^{1/n}$. Applying this to $b=\pi_0(a)$ implies $\|\pi_0(a_0)\|=\lim_{n\to\infty}\phi(a_0^n)^{1/n}
\leq\limsup_{n\to\infty}\left(\|\phi\|\cdot\|a_0\|^n\right)^{1/n}=\|a_0\|$, and we are done.
\end{proof}

\begin{mycomment}
Condition (ii) above cannot be relaxed to essential faithfulness. For example, let $A=C([0,1])$, let 
$A_0\subset A$ be the $*$-subalgebra of polynomial functions, and let $B=M_2$ (the $2\times 2$ matrices). 
\nagyrev{On $A_0$, the norm from $A$ is $\|f\|_A=\sup_{t\in [0,1]}|f(t)|$. Since we can also view
$A_0$ as a $*$-subalgebra of $C([0,2])$, we can consider the
$*$-homomorphism $\pi_0:A_0\ni f\longmapsto \left(\begin{array}{cc}f(1) & 0 \\ 0 & f(2)\end{array}\right)\in M_2$, which is clearly not bounded in the norm $\|\,.\,\|_A$. However, the state
$\sigma:M_2\ni\left(\begin{array}{cc}a&b\\ c& d\end{array}\right)\longmapsto a\in\mathbb{C}$ is essentially faithful on $M_2$, and the composition $\sigma\circ \pi:f\longmapsto f(1)$ is bounded in the norm $\|\,.\,\|_A$}. 
\end{mycomment}

As a combination of the preceding two results, we now have the following embedding criterion.

\begin{proposition}\label{embed-crit}
Assume $A$, $B$ are $C^*$-algebras, $A_0\subset A$ is a dense $*$-subalgebra, and $\pi_0:A_0\to B$ is a $*$-homomorphism. Assume also we have a double $I$-tuple of positive linear maps
$\{B\xrightarrow{\sigma_i}D_i\xleftarrow{\psi_i}A\}_{i\in I}$, with the following properties.
\begin{itemize}
\item[$(a)$] the collection $\Sigma=\{B\xrightarrow{\sigma_i}D_i\}_{i\in I}$ is jointly faithful (on $B$);
\item[$(b)$] the collection $\Psi=\{A\xrightarrow{\psi_i}D_i\}_{i\in I}$ is jointly essentially faithful (on $A$);
\item[$(c)$] $\sigma_i\circ\pi_0=\psi_i\big|_{A_0}$, $\forall\,i\in I$.
\end{itemize}
Then $\pi_0$ extends (uniquely) to an injective $*$-homomorphism $\pi:A\to B$. In particular, $\Psi$ is jointly faithful.
\end{proposition}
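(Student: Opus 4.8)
The plan is to assemble the conclusion from the two preceding results: Lemma \ref{extend-lemma} produces the extension $\pi$, and Remark \ref{embed-rem} delivers its injectivity. The bridge between them is the observation that, once $\pi$ exists, hypothesis $(c)$ forces $\sigma_i\circ\pi=\psi_i$ on \emph{all} of $A$, so that the family $\Sigma^\pi$ occurring in Remark \ref{embed-rem} is exactly $\Psi$.

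First I would construct $\pi$ by invoking the implication ``$(ii)\Rightarrow(i)$'' of Lemma \ref{extend-lemma}, using the jointly faithful family $\Sigma$ supplied by $(a)$. What must be checked is that each $\sigma_i\circ\pi_0:A_0\to D_i$ is bounded in the norm inherited from $A$. But $(c)$ identifies $\sigma_i\circ\pi_0$ with the restriction $\psi_i\big|_{A_0}$ of the positive linear map $\psi_i:A\to D_i$, and positive linear maps between $C^*$-algebras are automatically bounded; hence $\sigma_i\circ\pi_0$ is bounded, and Lemma \ref{extend-lemma} yields a unique $*$-homomorphism $\pi:A\to B$ extending $\pi_0$.

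Next I would establish injectivity. The two maps $\sigma_i\circ\pi$ and $\psi_i$ are positive, hence continuous, linear maps $A\to D_i$ that agree on the dense $*$-subalgebra $A_0$ (by $(c)$ together with $\pi\big|_{A_0}=\pi_0$), so they agree on all of $A$; that is, $\Sigma^\pi=\Psi$. Hypothesis $(b)$ states precisely that $\Psi=\Sigma^\pi$ is jointly essentially faithful, so the implication ``$(ii)\Rightarrow(i)$'' of Remark \ref{embed-rem} gives injectivity of $\pi$.

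Finally, for the ``in particular'' assertion I would compute $L_\Psi$ directly. Since $\psi_i=\sigma_i\circ\pi$, an element $a\in A$ lies in $L_{\psi_i}$ iff $\sigma_i\bigl(\pi(a)^*\pi(a)\bigr)=0$, i.e.\ iff $\pi(a)\in L_{\sigma_i}$; intersecting over $i$ gives $L_\Psi=\pi^{-1}(L_\Sigma)$. Joint faithfulness of $\Sigma$ from $(a)$ means $L_\Sigma=\{0\}$, so $L_\Psi=\pi^{-1}(\{0\})=\ker\pi=\{0\}$ by injectivity, proving $\Psi$ jointly faithful. I do not anticipate a serious obstacle here, as each step is short; the only points demanding care are the passage from agreement of $\sigma_i\circ\pi$ and $\psi_i$ on $A_0$ to agreement on $A$ (which rests on continuity of the positive linear maps) and the clean bookkeeping of the ideals $L_{(\cdot)}$ in the closing computation.
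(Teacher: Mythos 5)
Your proof is correct and takes essentially the same route as the paper's: the extension $\pi$ comes from Lemma \ref{extend-lemma} via hypotheses $(a)$ and $(c)$, and injectivity comes from Remark \ref{embed-rem} via $(b)$, after the (implicit in the paper, explicit in your write-up) observation that continuity of the positive maps upgrades $(c)$ to $\sigma_i\circ\pi=\psi_i$ on all of $A$, i.e.\ $\Sigma^\pi=\Psi$. Your closing computation $L_\Psi=\pi^{-1}(L_\Sigma)=\ker\pi=\{0\}$ simply spells out the ``in particular'' clause that the paper leaves to the reader.
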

\begin{proof}
By Lemma \ref{extend-lemma}, conditions $(a)$ and $(c)$ imply the fact that $\pi_0$ indeed extends to a (necessarily unique) $*$-homomorphism $\pi:A\to B$. \nagyrev{By Remark \ref{embed-rem}, condition $(b)$ ensures that $\pi$ is indeed injective.}
\end{proof}

\section{Preliminaries on Groupoid \texorpdfstring{$C^*$}{C*}-algebras}\label{grp-back}

\begin{convention}
All groupoids in this paper are assumed to be
second countable, locally compact and Hausdorff.
\end{convention}

\begin{notations}
If $\mathcal{G}$ is such a groupoid, then as usual, we denote its unit space by $\mathcal{G}^{(0)}$,
and we let $r,s:\mathcal{G}\to\mathcal{G}^{(0)}$ denote the range and source maps; and for each integer $n\geq 2$,
the set of composable $n$-tuples is 
$$\mathcal{G}^{(n)}=\big\{(\gamma_1,\gamma_2,\dots,\gamma_n)\in\textstyle{\prod_1^n\mathcal{G}}\,:\,r(\gamma_j)=s(\gamma_{j-1}),\,\,2\leq j\leq n\big\}.$$
Given some non-empty subset $\mathcal{X}\subset\mathcal{G}$, we denote the set
$\{\gamma^{-1}\,:\,\gamma\in\mathcal{X}\}$ simply by $\mathcal{X}^{-1}$.
Given non-empty subsets
$\mathcal{X}_1,\mathcal{X}_2,\dots,\mathcal{X}_n\subset\mathcal{G}$, we define
$$\mathcal{X}_1\mathcal{X}_2\dots\mathcal{X}_n=\big\{\gamma_1\gamma_2\cdots\gamma_n\,\,\big|\,\,
(\gamma_1,\dots,\gamma_n)\in\textstyle{\mathcal{G}^{(n)}\cap \prod_{j=1}^n\mathcal{X}_j}\big\}.
$$
When one of these sets is a singleton, say $\{\gamma\}$, we omit the braces; in particular, if $u\in\mathcal{G}^{(0)}$, the sets $u\mathcal{G}$ and $\mathcal{G}u$ are precisely the range and source fibers
$r^{-1}(u)$ and $s^{-1}(u)$, respectively, while $u\mathcal{G}u=u\mathcal{G}\cap \mathcal{G}u$ is the {\em isotropy group at $u$}.
\end{notations}

\begin{definition}
A groupoid $\mathcal{G}$ is 
{\em \'etale}, if $r$ and $s$ are local homeomorphsims; equivalently, there exists a basis for the topology on $\mathcal{G}$ consisting of {\em open bisections}, i.e. open subsets $\mathcal{B}\subset\mathcal{G}$,
for which
$r:\mathcal{B}\to r(\mathcal{B})$ and
$s:\mathcal{B}\to s(\mathcal{B})$ are homeomorphisms onto open sets in $\mathcal{G}$ (in particular,
$\mathcal{G}^{(0)}$ is clopen in $\mathcal{G}$). 
As a consequence, for each unit $u\in\mathcal{G}^{(0)}$ the sets $u\mathcal{G}$ and
$\mathcal{G}u$ are discrete in the relative topology; hence 
compact subsets of $\mathcal{G}$ intersect any one of these sets finitely many times.
\end{definition}

We now recall the construction (cf. \cite{Renault}) of the $C^*$-algebras associated to
an \'etale groupoid $\mathcal{G}$, as above.  One starts off by endowing $C_c(\mathcal{G})$
with the following $*$-algebra structure:
\begin{align*}
(f\times g)(\gamma) &= \sum_{(\alpha,\beta) \in \mathcal{G}^{(2)}: \alpha \beta = \gamma} f(\alpha) g(\beta);\\
f^*(\gamma)&=\overline{f(\gamma^{-1})}.
\end{align*}
\begin{nagyrevenv}
For future reference, we recall that these operations obey the following support rules:
\begin{align}
\text{supp}\,(f\times g)&\subset(\text{supp}\,f)(\text{supp}\,g);\label{sup-prod}\\
\text{supp}\,f^*&=(\text{supp}\,f)^{-1}.\label{sup-inv}
\end{align}
\end{nagyrevenv}
With these definitions, the {\em full\/} $C^*$-norm on $C_c(\mathcal{G})$ is given as
\begin{equation}
\|f\|_{\text{full}}=\sup\left\{\big\|\pi(f)\big\|\,:\,\pi\text{ $*$-representation of $C_c(\mathcal{G})$}\right\},
\label{full-norm}
\end{equation}
By a deep result -- Renault's Disintegration Theorem (\cite[Thm. II.1.21, Corollary II.1.22]{Renault}), the quantity $\|f\|_{\text{full}}$ is finite, for each $f\in C_c(\mathcal{G})$, and furthermore, satisfies
 $\|f\|_{\text{full}}\leq\|f\|_I$, where
$$\|f\|_I=\sup_{u\in\mathcal{G}^{(0)}}\max\big\{\textstyle{\sum_{\gamma\in u\mathcal{G}}|f(\gamma)|,\,
\sum_{\gamma\in \mathcal{G}u}|f(\gamma)}|\big\},\,\,\,f\in C_c(\mathcal{G}).$$

As $\mathcal{G}^{(0)}$ is clopen in $\mathcal{G}$, we have an inclusion $C_c(\mathcal{G}^{(0)})\subset C_c(\mathcal{G})$, which turns $C_c(\mathcal{G}^{(0)})$ into a 
$*$-subalgebra; however, the $*$-algebra operations on $C_c(\mathcal{G}^{(0)})$ inherited from
$C_c(\mathcal{G})$ coincide with the usual (pointwise) operations:
$h^*=\bar{h}$ and $h\times k=hk$, $\forall\,h,k\in C_c(\mathcal{G}^{(0)})$. In fact, something similar can be said concerning 
the left and right
$C_c(\mathcal{G}^{(0)})$-module structure of $C_c(\mathcal{G})$: for all $f\in C_c(\mathcal{G})$, 
$h\in C_c(\mathcal{G}^{(0)})$ we have
\begin{align}
(f\times h)(\gamma)&=f(\gamma)h\big(s(\gamma)\big);\\
(h\times f)(\gamma)&=h\big(r(\gamma)\big)f(\gamma).
\end{align} 
When restricted to $C_c(\mathcal{G}^{(0)})$, the full $C^*$-norm agrees with the usual sup-norm $\|\cdot\|_\infty$, so by completion,
the embedding $C_c(\mathcal{G}^{(0)})\subset C_c(\mathcal{G})$ gives rise to a non-degenerate inclusion
$C_0(\mathcal{G}^{(0)})\subset C^*(\mathcal{G})$. 

The restriction map
$C_c(\mathcal{G})\ni f\longmapsto f|_{\mathcal{G}^{(0)}}\in C_c(\mathcal{G}^{(0)})$ 
(which is contractive in the full norm) gives rise by completion to 
 a {\em conditional expectation of $C^*(\mathcal{G})$ onto $C_0(\mathcal{G}^{(0)})$}, i.e. a norm one linear map
 $\mathbb{E}:C^*(\mathcal{G})\to C_0(\nagyrev{\mathcal{G}}^{(0)})$,
such that 
\begin{itemize}
\item $\mathbb{E}(f)=f$, $\forall\,f\in C_0(\mathcal{G}^{(0)})$;
\item $\mathbb{E}(f_1af_2)=f_1\mathbb{E}(a)f_2$, $\forall\,a\in C^*(\mathcal{G}),\,f_1,f_2\in C_0(\mathcal{G}^{(0)})$;
\end{itemize}

The KSGNS representation (\cite{Lance}) associated with $\mathbb{E}$ is a $*$-homomorphism 
$\boldsymbol{\Lambda}_{\mathbb{E}}:
C^*(\mathcal{G})\to\mathcal{L}\left(L^2\left(C^*(\mathcal{G}),\mathbb{E}\right)\right)$, where 
$L^2\left(C^*(\mathcal{G}),\mathbb{E}\right)$ is the Hilbert $C_0(\mathcal{G}^{(0)})$-module obtained by completing
$C^*(\mathcal{G})$ in the norm given by the inner product
$\langle a|b\rangle_{\mathbb{E}}=\mathbb{E}(a^*b)$. The kernel of this representation is precisely the ideal $K_{\mathbb{E}}$ defined in Section 1. The quotient
$C^*(\mathcal{G})/K_\mathbb{E}$ is the so-called {\em reduced\/} groupoid $C^*$-algebra, denoted by $C^*_{\text{red}}(\mathcal{G})$. The ideal $K_{\mathbb{E}}$ \nagyrev{can} be described alternatively with the help of the usual
GNS representations $\Gamma_{\text{ev}_u\circ\mathbb{E}}:C^*(\mathcal{G})\to
\mathscr{B}(L^2(C^*(\mathcal{G}),\text{ev}_u\circ\mathbb{E}))$, $u\in\mathcal{G}^{(0)}$.
(The Hilbert space $L^2(C^*(\mathcal{G}),\text{ev}_u\circ\mathbb{E})$ is the completion of
$C_c(\mathcal{G})$ in the norm given by the inner product $\langle f|g\rangle_u=
(f^*\times g)(u)$.)
With these (honest) representations in mind, we have
$K_{\mathbb{E}}=\bigcap_{u\in\mathcal{G}^{(0)}}K_{\text{ev}_u\circ\mathbb{E}}$.

As was the case with the full groupoid $C^*$-algebra, after composing with the quotient map
$\pi_{\text{red}}:C^*(\mathcal{G})\to C^*_{\text{red}}(\mathcal{G})$, we still have an embedding 
$C_c(\mathcal{G})\subset C^*_{\text{red}}(\mathcal{G})$, so we can also view $C^*_{\text{red}}(\mathcal{G})$ as the completion of the convolution $*$-algebra $C_c(G)$ with respect to a (smaller) $C^*$-norm, denoted $\|\cdot\|_{\text{red}}$.
Again, when restricted to $C_c(\mathcal{G}^{(0)})$, the norm $\|\,\cdot\,\|_{\text{red}}$ agrees with
$\|\,\cdot\,\|_\infty$, so $C_0(\mathcal{G}^{(0)})$ still embeds in $C^*_{\text{red}}(\mathcal{G})$, and furthermore, since the natural expectation 
$\mathbb{E}$ vanishes on $K_{\mathbb{E}}$, we will have a reduced version of natural expectation, denoted by
 $\mathbb{E}_{\text{red}}:C^*_{\text{red}}(\mathcal{G})\to C_0(\mathcal{G}^{(0)})$, which satisfies 
$\mathbb{E}_{\text{red}}\circ\pi_{\text{red}}=\mathbb{E}$. Not only we know that $\mathbb{E}_{\text{red}}$ is essentially faithful (because $K_{\mathbb{E}}=\text{ker}\,\pi_{\text{red}}$), but in fact it is
(honestly) faithful on $C^*_{\text{red}}(\mathcal{G})$.

\section{Relative Dominance}\label{rel-dom-sec}

As it turns out, proving that an inclusion $B\subset A$ is essential is always tied up with some
analysis of
 $A$-lifts (i.e. {\em state extensions\/}) for states on $B$, as 
Corollary \ref{mother} below (which also contains some type of converse of Theorem \ref{abs1bnrsw}) 
 will demonstrate.
In preparation for these results, we start off by formulating relative versions for essential faithfulness, as well as property {\sc (e)} from Theorem \ref{abs1bnrsw}.
  
\begin{definitions}\label{def-sub-dom}
Suppose a $C^*$-algebra $A$ is given, along with some ideal 
$J \lhd  A$.
\begin{itemize}
\item[(i)] A collection $\Psi = \{\psi_i:A\to B_i\}_{i \in I}$ of positive linear maps between $C^*$-algebras
is said to be {\em subordinated to $J$}, if $K_\Psi\subset J$.
\item[(ii)] For a non-degenerate $C^*$-inclusion $B\subset A$, we say that $J$ is {\em dominant relative to $B$}, if
{\em whenever an ideal $L\lhd A$ satisfies $B\cap L\subset B\cap J$, it also satisfies
$L\subset J$.}
\end{itemize}
\end{definitions}

\begin{remark}
When specializing the above Definition to the zero ideal, it is straightforward that
\begin{itemize}
\item[(i)] A family $\Psi$ as above is jointly essentially faithful, if and only if
$\Psi$ is subordinated to the zero ideal $\{0\}$.
\item[(ii)] A a non-degenerate $C^*$-inclusion $B\subset A$ is {\em essential}, if and only if
the zero ideal $\{0\}$ is dominant relative to $B$.
\end{itemize}
\end{remark}

\begin{theorem}\label{rel-abs-uniqueness}\label{big-dominant}
Suppose that a non-degenerate $C^*$-inclusion $B \subset A$ is given, along with some ideal $J \lhd A$.
Regard the state space $S(B / B \cap J)$ as a subset of $S(B)$ and the 
pure state space $P(B / B \cap J)$ as a subset of $P(B)$ (see the Note preceding the proof below).
The following are equivalent: 
\begin{itemize}
\item[(i)] $J$ is dominant relative to $B$; 
\item[(ii)] there exists $\Phi \subset S(B /B \cap J)$ such that all $A$-lifts of $\Phi$ are subordinated to $J$; 
\item[(iii)] there exists $\Phi \subset P(B/B \cap J)$ such that all $A$-lifts of $\Phi$ are subordinated to $J$; 
\item[(iii')] there exists $\Phi \subset P(B/B \cap J)$ such that all pure $A$-lifts of $\Phi$ are subordinated to $J$.
\end{itemize}
\end{theorem}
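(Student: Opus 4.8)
The plan is to prove the cycle $(i)\Rightarrow(iii)\Rightarrow(ii)\Rightarrow(i)$ together with $(iii)\Rightarrow(iii')\Rightarrow(i)$, after first disposing of the two trivial implications. Since $P(B/B\cap J)\subset S(B/B\cap J)$, any family $\Phi$ witnessing $(iii)$ also witnesses $(ii)$; and since every pure $A$-lift is in particular an $A$-lift, the same $\Phi$ witnesses $(iii')$. Thus $(iii)\Rightarrow(ii)$ and $(iii)\Rightarrow(iii')$ are immediate, and it remains to establish $(i)\Rightarrow(iii)$, $(ii)\Rightarrow(i)$, and $(iii')\Rightarrow(i)$, which together close the two loops.

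For $(i)\Rightarrow(iii)$ I would take $\Phi=P(B/B\cap J)$ to be the \emph{whole} pure state space and let $\Sigma=\{\boldsymbol\sigma(\varphi)\}_{\varphi\in\Phi}$ be an arbitrary $A$-lift. The point is that $K_\Sigma=\bigcap_\varphi K_{\boldsymbol\sigma(\varphi)}$ is a two-sided ideal, so for $b\in B\cap K_\Sigma$ one also has $b^*b\in K_\Sigma$, whence $\boldsymbol\sigma(\varphi)(b^*b)=0$, i.e. $\varphi(b^*b)=0$, for every $\varphi$. Viewed through the quotient $B\to B/B\cap J$, the image of $b^*b$ is then a positive element annihilated by every pure state of $B/B\cap J$; since the norm of a positive element is the supremum of its values over the pure states, this image is $0$, i.e. $b\in B\cap J$. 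Hence $B\cap K_\Sigma\subset B\cap J$, and the relative dominance hypothesis $(i)$ forces $K_\Sigma\subset J$, so $\Sigma$ is subordinated to $J$.

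The converse implications are where the real work lies, and both proceed by passing to a quotient. Given an ideal $L\lhd A$ with $B\cap L\subset B\cap J$, the non-degenerate inclusion $B\subset A$ descends to a non-degenerate inclusion $B/(B\cap L)\cong(B+L)/L\subset A/L$. Each $\varphi$ in the witnessing family $\Phi$ annihilates $B\cap J\supset B\cap L$, hence factors through $B/(B\cap L)$; I then extend this functional to a state of $A/L$ and pull it back along $A\to A/L$ to a state $\psi_\varphi\in S(A)$ with $\psi_\varphi|_B=\varphi$ and $L\subset K_{\psi_\varphi}$ (the latter because $\psi_\varphi$ vanishes on the ideal $L$). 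The resulting $\Sigma=\{\psi_\varphi\}_{\varphi\in\Phi}$ is an $A$-lift of $\Phi$ with $L\subset K_\Sigma$, so hypothesis $(ii)$ yields $K_\Sigma\subset J$ and therefore $L\subset J$, proving $(ii)\Rightarrow(i)$.

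For $(iii')\Rightarrow(i)$ I would run the identical construction while keeping everything pure: a pure state $\varphi$ of $B/(B\cap J)$ pulls back to a pure state of the quotient $B/(B\cap L)$ (purity is preserved under quotient $*$-homomorphisms), which by Remark \ref{extension-remark} admits a \emph{pure} extension to $A/L$; pulling this back along $A\to A/L$ gives a pure state $\psi_\varphi\in P(A)$ lifting $\varphi$ with $L\subset K_{\psi_\varphi}$. This produces a pure $A$-lift $\Sigma$ with $L\subset K_\Sigma$, so $(iii')$ again forces $L\subset J$. I expect the main obstacle to be precisely this converse construction: one must engineer the lift so that it \emph{detects} $L$ (placing $L$ inside every GNS kernel, i.e. inside $K_\Sigma$) while simultaneously preserving the restriction to $\varphi$ and, in the pure case, preserving purity — which is exactly why working inside $A/L$ and invoking preservation of purity both under quotient maps and under pure-state extension is the correct device.
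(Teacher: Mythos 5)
Your proposal is correct and follows essentially the same route as the paper: the easy implications are disposed of identically, $(i)\Rightarrow(iii)$ uses $\Phi=P(B/B\cap J)$ with the dominance hypothesis applied to $B\cap K_\Sigma$, and the converse implications pass to the quotient $A/L$, extend states (purely, in the $(iii')$ case, via Remark \ref{extension-remark} and preservation of purity under pullback along the surjection $A\to A/L$) so that $L\subset K_{\psi_\varphi}$ for every member of the lift. The only cosmetic difference is that in $(i)\Rightarrow(iii)$ you deduce $b\in B\cap J$ from $\varphi(b^*b)=0$ and the pure-state characterization of the norm of a positive element, where the paper evaluates $\boldsymbol{\sigma}(\varphi)(xby)$ on an approximate unit to get $\varphi(b)=0$ directly; both are sound.
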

\begin{note}
The inclusions mentioned in the statement are simply the compositions with the
quotient $*$-homomorphism $\rho:B\to B/B\cap J$, namely the map
$S(B/B\cap J)\ni\varphi\longmapsto \varphi\circ\rho\in S(B)$ and its restriction to
$P(B/B\cap J)$. These inclusions simply identify
$S(B/B\cap J)$ with the set $\{\varphi\in S(B)\,:\,\varphi\big|_{B\cap J}=0\}$ and
$P(B/B\cap J)$ with the set $\{\varphi\in P(B)\,:\,\varphi\big|_{B\cap J}=0\}$.
\end{note}
\begin{proof}
\noindent (i)$\Rightarrow$(iii). Assume that $J$ is dominant relative to $B$, and let us show that property (iii) holds for $\Phi = P(B / B \cap J)$. Fix for the moment some $A$-lift
$\Sigma = \{\boldsymbol{\sigma}(\varphi)\}_{\varphi \in P(B/B \cap J)}$ for $P(B/B\cap J)$
(associated with some cross-section $\boldsymbol{\sigma}:P(B/B\cap J)\to S(A)$), and let us justify the inclusion
$K_\Sigma \subset J$. Using the assumption (i), it suffices to prove the inclusion
$B\cap K_{\Sigma}\subset B\cap J$. To this end, fix some
$b\in B\cap K_{\Sigma}$, and let us show that $b\in B\cap J$. By our assumption on $b$, we know that
$$(\boldsymbol{\sigma}(\varphi))(xby)=0,\,\,\,\forall\,x,y\in A,\,\varphi\in P(B/ B\cap J).$$
By taking $x$ and $y$ to be terms in an approximate unit for $B$, and using the cross-section condition
$\boldsymbol{\sigma}(\varphi)\big|_B=\varphi$, the above equalities imply
$$\varphi(b)=0,\,\,\,\forall\,\varphi\in P(B/B\cap J),$$
which then clearly implies that $b$ indeed belongs to $B\cap J$.
 
It is obvious that (ii)$\Leftarrow$(iii)$\Rightarrow$(iii'), so it stands to prove that 
[(ii) or (iii')]$\Rightarrow$(i). Start off by assuming the existence of a collection $\Phi\subset S(B/B\cap J)$, which either
\begin{itemize}
\item[$(*)$] satisfies (ii), or
\item[$(**)$] is a subset of $P(B / B \cap J)$ and satisfies (iii'),
\end{itemize} 
let $L\lhd A$ be an ideal satisfying $B\cap L\subset B\cap J$, and let us prove the inclusion $L \subset J$.

Consider the quotient $\pi: A \to A/L$ and the non-degenerate $C^*$-subalgebra 
$$\pi(B)(\simeq B/B\cap L)\subset A/L.$$
Fix a cross-section $\boldsymbol{\eta}: S(\pi(B))\to S(A/L)$ for $\mathbf{r}_{(A/L)\downarrow \pi(B)}$, which
maps $P(\pi(B))$ into $P(A/L)$. (Use Remark \ref{extension-remark}.) In other words, for each state $\varphi\in S(B)$ which vanishes on $B\cap L$, we choose $\boldsymbol{\eta}(\varphi)$ to be an extension of $\varphi$ to a state on $A$, which vanishes on $L$, and furthermore, in case $\varphi$ were pure, $\boldsymbol{\eta}(\varphi)$ is also chosen to be pure.
 
By the assumption $B \cap L \subset B \cap J$, it follows that $S(B/B \cap J)\subset S(B/B\cap L)$
and $P(B/B \cap J)\subset P(B/B\cap L)$ (that is, if a state $\varphi$ on $B$ vanishes on $B\cap J$, then it also
vanishes on $B\cap L$).
This means that we can view $\Phi\subset S(B/B\cap L)$ (or, in case $(**)$, we can view
$\Phi\subset P(B/B\cap L)$), thus $\Sigma=\{\boldsymbol{\eta}(\varphi)\}_{\varphi\in\Phi}$ defines an $A$-lift for 
$\Phi$, which is pure in case $(**)$. Therefore, by \nagyrev{either (ii) or (iii')}, it follows that
$\Sigma$ is subordinated to $J$, i.e.
$\bigcap_{\varphi\in\Phi} K_{\boldsymbol{\eta}(\varphi)}\subset J$. However, by our definition of
$\boldsymbol{\eta}$, we know that, for each $\varphi\in\Phi$, the state
$\boldsymbol{\eta}(\varphi)$ vanishes on $L$. In other words, 
$L\subset K_{\boldsymbol{\varphi}}$, $\forall\,\varphi\in \Phi$, which then implies
$L\subset\bigcap_{\varphi\in\Phi} K_{\boldsymbol{\eta}(\varphi)}\subset J$, and we are done.
\end{proof}

\begin{note}
If some $\Phi \subset S(B/B \cap J)$ satisfies $(2)$, then any larger set $\Psi \supset \Phi$ of states on $B/B \cap J$ will satisfy it as well. Similarly, the class of all subsets $\Phi \subset P(B/B \cap J)$ satisfying (3) or (3') is closed upward. 
\end{note}

If in the preceding Theorem we let $J=\{0\}$, we obtain the following generalization of Theorem \ref{abs1bnrsw}.

\begin{corollary}\label{mother}
For a non-degenerate inclusion $B \subset A$, the following are equivalent: 
\begin{itemize}
\item[(i)] $B \subset A$ is essential;
\item[(ii)] there exists $\Phi \subset S(B)$ such that all $A$-lifts of $\Phi$ are jointly essentially faithful; 
\item[(iii)] there exists $\Phi \subset P(B)$ such that all $A$-lifts of $\Phi$ are jointly essentially faithful; 
\item[(iii')] there exists $\Phi \subset P(B)$ such that all pure $A$-lifts of $\Phi$ are jointly essentially faithful. 
\end{itemize}
\end{corollary}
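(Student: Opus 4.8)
The plan is to obtain Corollary \ref{mother} as the instance $J=\{0\}$ of the Relative Dominance theorem (Theorem \ref{rel-abs-uniqueness}), translating each of its four conditions through the dictionary recorded in the Remark immediately following Definitions \ref{def-sub-dom}. Since the zero ideal is a proper ideal of the (non-zero) $C^*$-algebra $A$, it is a legitimate choice of $J\lhd A$, so the theorem applies verbatim with $J=\{0\}$.

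First I would record the effect of the substitution $J=\{0\}$ on the objects appearing in Theorem \ref{rel-abs-uniqueness}. Here $B\cap J=\{0\}$, so the quotient map $\rho:B\to B/(B\cap J)$ is an isomorphism; consequently $S(B/B\cap J)=S(B)$ and $P(B/B\cap J)=P(B)$ under the identifications described in the Note preceding the proof of that theorem. Thus the sets $\Phi$ occurring in conditions (ii), (iii), (iii$'$) of Theorem \ref{rel-abs-uniqueness} are exactly subsets of $S(B)$ (respectively $P(B)$), precisely as demanded in the corollary.

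Next I would invoke the two equivalences from the Remark after Definitions \ref{def-sub-dom}. Its part (ii) identifies \emph{essential} with \emph{the zero ideal is dominant relative to} $B$, which converts condition (i) of Theorem \ref{rel-abs-uniqueness} into condition (i) of the corollary. Its part (i) identifies \emph{jointly essentially faithful} with \emph{subordinated to} $\{0\}$; applying this to every ($A$-lift or pure $A$-lift) of $\Phi$ converts the phrase ``all $A$-lifts of $\Phi$ are subordinated to $J$'' into ``all $A$-lifts of $\Phi$ are jointly essentially faithful,'' and similarly for pure lifts. Hence conditions (ii), (iii), (iii$'$) of Theorem \ref{rel-abs-uniqueness} become, word for word, conditions (ii), (iii), (iii$'$) of the corollary. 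Since the theorem asserts the equivalence of its four conditions for this $J$, the corollary follows. The only point requiring any care --- and the closest thing to an obstacle --- is checking that these two dictionary entries correctly align the $J=\{0\}$ instances of ``dominant relative to $B$'' and ``subordinated to $J$'' with the notions of ``essential'' and ``jointly essentially faithful,'' which is exactly what the cited Remark guarantees.
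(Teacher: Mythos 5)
Your proposal is correct and is exactly the paper's argument: the paper derives Corollary \ref{mother} by setting $J=\{0\}$ in Theorem \ref{rel-abs-uniqueness} and reading off the translations ``dominant relative to $B$ for $J=\{0\}$'' $=$ ``essential'' and ``subordinated to $\{0\}$'' $=$ ``jointly essentially faithful'' from the Remark after Definitions \ref{def-sub-dom}. Your extra check that $S(B/B\cap J)=S(B)$ and $P(B/B\cap J)=P(B)$ when $B\cap J=\{0\}$ is the same identification the paper leaves implicit.
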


\section{Central Inclusions}\label{central-sec}

The main applications of Theorem \ref{rel-abs-uniqueness} and its Corollary \ref{mother} are Theorem \ref{simple-dom} 
and its Corollary \ref{simple-essential} below, which deal with
{\em central inclusions} $B\subset A$, i.e. those for which $B$ is contained in the center of $A$, that is, $ba=ab$, $\forall\,a\in A,\,b\in B$. Such inclusions force $B$ to be abelian, so we can identify $B=C_0(Q)$, for some locally compact space $Q$.

\nagyrev{A large class of examples -- which the results in this section are tailored for -- 
consists of those central inclusions of the form 
$C_0(\mathcal{G}^{(0)})\subset C^*_\text{red}(\mathcal{G})$, arising from {\em \'etale group bundles\/}; they are
 discussed in Section \ref{bundles-sec}.}

\begin{notations}
Assume $C_0(Q)\subset A$ is a central non-degenerate inclusion. For any point $q\in Q$, denote by $J^{\text{unif}}_q\lhd A$ the ideal
generated by $C_{0,q}(Q)=\{f\in C_0(Q)\,:\,f(q)=0\}$. With the help of $J^{\text{unif}}_q$, 
we can 
endow $A$ with the $C^*$-seminorms
$p^{\text{unif}}_q$, $q\in Q$, given by
\begin{align}
p^{\text{unif}}_q(a) & = ||a+J^{\text{unif}}_q||_{A/J^{\text{unif}}_q} = \inf \{ ||a+x||: x \in J^{\text{unif}}_q\} 
\notag \\
& = \inf \{ ||fa||: f \in C_{0,q}(Q), 0 \leq f \leq 1=f(q) \}.\label{punif=inf}
\end{align}
\end{notations}

\begin{nagyrevenv}
\begin{mycomment}
Central inclusions are special cases of so-called {\em $C_0(Q)$-algebras}, for which the condition that $C_0(Q)$ is a $C^*$-subalgebra of $A$ is relaxed to the condition that $C_0(Q)$ is contained in the center $Z(M(A))$ of the multiplier algebra $M(A)$, combined with the (non-degeneracy) condition that an approximate unit for $C_0(Q)$ converges to 
$1\in M(A)$ in the strict topology. In this general setting the seminorms $p^{\text{unif}}_q$ still make sense, when defined by the last equality above; we then can define our ideals by $J^{\text{unif}}_q=\text{ker}\,p^{\text{unif}}_q$. Whether we work with arbitrary $C_0(Q)$-algebras, or just with those special ones considered above, one can show (see e.g. \cite[Lemma 1.12]{Nagy}, or \cite[Appendix C]{Williams2}) that for each $a\in A$, the map
\begin{equation}\label{punif-map}
Q\ni q\longmapsto p^{\text{unif}}_q(a)\in [0,\infty)
\end{equation}
is {\em upper semicontinuous}, in the sense that
\begin{equation}
\limsup_{q\to q_0}p^{\text{unif}}_q(a)\leq
p^{\text{unif}}_{q_0}(a),\,\,\,\forall\,q_0\in Q.
\label{p=usc}
\end{equation}
(An equivalent characterization of \eqref{p=usc} is the fact that all sets
$\{q\in Q\,:\,p^{\text{unif}}_q(a)<s\}$, $s>0$, are open.)
Likewise (using non-degeneracy),  it is also pretty clear that
\begin{equation}
\lim_{q\to\infty}p^{\text{unif}}_q(a)=0,\,\,\,\forall\,a\in A.
\label{p-nondeg}
\end{equation}
Regardless of whether \eqref{punif-map} is continuous or not, by non-degeneracy we always have
\begin{equation}
\|a\|=\sup_{q\in Q}p^{\text{unif}}_q(a),\,\,\,\forall\,a\in A.
\label{norm-sup-punif}
\end{equation}
\end{mycomment}
\end{nagyrevenv}

\begin{theorem}\label{simple-dom}
Suppose that $C_0(Q) \subset A$ is a central non-degenerate inclusion, and assume (using the above notation) the set
$$Q^{\text{\rm unif}}_{\text{\rm simple}}=\{q\in Q\,:\,A/J^{\text{\rm unif}}_q\text{ is a simple $C^*$-algebra\/}\}$$
is non-empty. Then for any non-empty subset $Q_0\subset Q^{\text{\rm unif}}_{\text{\rm simple}}$, the ideal
$
\bigcap_{q\in Q_0}J^{\text{\rm unif}}_q$ is dominant relative to $C_0(Q)$.
\end{theorem}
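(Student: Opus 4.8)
The plan is to unwind Definition \ref{def-sub-dom}(ii): with $B=C_0(Q)$ and $J=\bigcap_{q\in Q_0}J^{\text{unif}}_q$, I must show that any ideal $L\lhd A$ with $C_0(Q)\cap L\subseteq C_0(Q)\cap J$ satisfies $L\subseteq J$, i.e. $L\subseteq J^{\text{unif}}_q$ for every $q\in Q_0$. The whole argument rests on one computation, which I will establish first as a lemma: for a non-degenerate central inclusion $C_0(Q)\subseteq A$ and every $q\in Q$,
\[
C_0(Q)\cap J^{\text{unif}}_q=C_{0,q}(Q).
\]
Granting this, we get $C_0(Q)\cap J=\bigcap_{q\in Q_0}C_{0,q}(Q)=\{f\in C_0(Q):f|_{Q_0}=0\}$. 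Writing the ideal $C_0(Q)\cap L$ of $C_0(Q)$ as $C_0(U)$ for an open set $U\subseteq Q$, the hypothesis $C_0(Q)\cap L\subseteq C_0(Q)\cap J$ then reads exactly $U\cap Q_0=\emptyset$; in particular, no $f\in C_0(Q)\cap L$ is nonzero at any point of $Q_0$.

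To prove the lemma, the inclusion $\supseteq$ is immediate since $C_{0,q}(Q)$ both lies in $C_0(Q)$ and generates $J^{\text{unif}}_q$. For $\subseteq$ I would extend the character $\operatorname{ev}_q:C_0(Q)\to\mathbb{C}$ to a state $\omega$ on $A$ via Hahn--Banach. Since $\omega|_{C_0(Q)}=\operatorname{ev}_q$ is multiplicative, $C_0(Q)$ lies in the multiplicative domain of $\omega$, so $\omega(ac)=\omega(a)\operatorname{ev}_q(c)=0$ for all $a\in A$ and $c\in C_{0,q}(Q)$; hence $\omega$ vanishes on $A\,C_{0,q}(Q)$ and therefore on $J^{\text{unif}}_q$ (here I use Remark \ref{CH}/non-degeneracy to describe the generated ideal as $\overline{A\,C_{0,q}(Q)}$). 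Thus $\omega$ factors through $A/J^{\text{unif}}_q$, and for $f\in C_0(Q)\cap J^{\text{unif}}_q$ we get $f(q)=\omega(f)=0$, i.e. $f\in C_{0,q}(Q)$. The crucial consequence is that the image of $C_0(Q)$ in the fiber $A/J^{\text{unif}}_q$ is $C_0(Q)/C_{0,q}(Q)\cong\mathbb{C}$, so \emph{every fiber $A/J^{\text{unif}}_q$ is nonzero} (which also guarantees the fibers over $Q^{\text{unif}}_{\text{simple}}$ are genuinely simple, not zero).

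For the main step, fix $q\in Q_0\subseteq Q^{\text{unif}}_{\text{simple}}$, so $q\notin U$. I would pass to the quotient $D=A/L$ with projection $\pi:A\to D$, and observe that $\pi(C_0(Q))\subseteq D$ is again an honest non-degenerate central inclusion, isomorphic to $C_0(Q)/(C_0(Q)\cap L)=C_0(Q\setminus U)$, with $q$ a genuine point of its spectrum. Applying the lemma \emph{to the inclusion $\pi(C_0(Q))\subseteq D$ at the point $q$} shows the corresponding fiber is nonzero. That fiber is $D/J^D_q$, where $J^D_q$ is the ideal of $D$ generated by $\pi(C_{0,q}(Q))$; since $\pi$ is a surjection this equals $\pi(J^{\text{unif}}_q)=(L+J^{\text{unif}}_q)/L$, and because the sum of two closed ideals in a $C^*$-algebra is closed, $D/J^D_q\cong A/(L+J^{\text{unif}}_q)$ is a genuine quotient. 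Nonvanishing of this fiber gives $L+J^{\text{unif}}_q\neq A$, so $(L+J^{\text{unif}}_q)/J^{\text{unif}}_q$ is a \emph{proper} closed ideal of the simple algebra $A/J^{\text{unif}}_q$, hence zero; that is, $L\subseteq J^{\text{unif}}_q$. As $q\in Q_0$ was arbitrary, $L\subseteq\bigcap_{q\in Q_0}J^{\text{unif}}_q=J$.

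The hard part is the lemma together with its reuse on $A/L$: everything hinges on $C_0(Q)$ being a \emph{genuine} subalgebra rather than merely sitting in $M(A)$. This is exactly what makes the image of $C_0(Q)$ in each fiber a nonzero copy of $\mathbb{C}$, and it is not automatic from a central embedding into multipliers alone --- for instance $C([0,1])\subseteq M(C_0((0,1]))$ is central and non-degenerate yet has a vanishing fiber at $0$. The multiplicative-domain argument (equivalently, the faithful $\mathbb{C}$-summand in the fiber) is therefore the load-bearing input, and simplicity of the fibers over $Q_0$ is what upgrades ``fiber of $A/L$ nonzero'' into ``$L$ contained in $J^{\text{unif}}_q$.''
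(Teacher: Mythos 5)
Your proof is correct, but it takes a genuinely different route from the paper's. The paper deduces Theorem \ref{simple-dom} from the Relative Dominance theorem: it takes $\Phi=\{\operatorname{ev}_q\}_{q\in Q_0}$ and proves a single Claim, namely that \emph{every} state extension $\psi_q$ of $\operatorname{ev}_q$ has GNS kernel $K_{\psi_q}$ equal to $J^{\text{unif}}_q$ on the nose --- the inclusion $J^{\text{unif}}_q\subset K_{\psi_q}$ comes from the same centrality computation you use (there written as $\psi_q(a^*f^*fa)=\psi_q(f^*a^*af)\leq\|a\|^2\operatorname{ev}_q(f^*f)=0$; your multiplicative-domain phrasing is correct, though plain Cauchy--Schwarz already suffices), and equality comes from properness of a GNS kernel plus simplicity of the fiber; criterion (ii) of Theorem \ref{rel-abs-uniqueness} then delivers dominance. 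You instead verify Definition \ref{def-sub-dom}(ii) directly, and in doing so you effectively inline the quotient-passing mechanism that the paper hides inside its proof of Theorem \ref{rel-abs-uniqueness}: there, states vanishing on $L$ are produced by extending over $A/L$, which is exactly your step of applying the lemma to $\pi(C_0(Q))\subseteq A/L$ at the point $q$. Your decomposition isolates a clean, reusable fact the paper never states explicitly --- $C_0(Q)\cap J^{\text{unif}}_q=C_{0,q}(Q)$ for honest central inclusions, hence no fiber vanishes --- and your multiplier example $C([0,1])\subseteq M(C_0((0,1]))$ correctly shows this is where ``genuine subalgebra'' (as opposed to the $C_0(Q)$-algebra setting of the Comment in Section \ref{central-sec}) earns its keep. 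What each approach buys: the paper's route is shorter given the machinery and keeps the result inside the state-lift framework that organizes the whole paper (cf.\ Corollary \ref{mother}); yours is self-contained, avoids GNS representations altogether (Hahn--Banach state extension plus Cauchy--Schwarz suffice), at the modest cost of the quotient bookkeeping you carry out --- all of which checks: $\pi(J^{\text{unif}}_q)$ is closed and is the ideal of $A/L$ generated by $\pi(C_{0,q}(Q))$, the restriction map $C_{0,q}(Q)\to C_{0,q}(Q\setminus U)$ is onto because $Q\setminus U$ is closed and $q\in Q_0$ forces $q\notin U$, the sum $L+J^{\text{unif}}_q$ of closed ideals is closed, and simplicity of $A/J^{\text{unif}}_q$ then turns properness of $(L+J^{\text{unif}}_q)/J^{\text{unif}}_q$ into $L\subseteq J^{\text{unif}}_q$, exactly paralleling the paper's use of simplicity in its Claim.
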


\begin{proof}
For every $q\in Q$, let $\text{ev}_q$ denote the evaluation map
$C_0(Q)\ni f\longmapsto f(q)\in\mathbb{C}$, which defines a pure state on $C_0(Q)$.
The main step in our proof is contained in the following \nagyrev{claim.}
\begin{claim}
If $q\in Q^{\text{\rm unif}}_{\text{\rm simple}}$ and $\psi_q\in S(A)$ is any extension of $\text{\rm ev}_q$ to a
state on $A$, then $K_{\psi_q}=J^{\text{\rm unif}}_q$.
\end{claim}
First of all, since $K_{\psi_q}$ is the kernel of a GNS representation (associated to a state on $A$), it is trivial that
$K_{\psi_q}\subsetneq A$, so by the assumed simplicity of $A/J^{\text{unif}}_q$, it suffices to prove the inclusion
$J^{\text{unif}}_q\subset K_{\Psi_q}$. Secondly, since by construction, the set $\{f\in C_{0,q}(Q)\,:\,0\leq f\leq 1\}$ (equipped with the usual order) constitutes an approximate unit for $J^{\text{unif}}_q$, all we have to justify is the inclusion $C_{0,q}(Q)\subset
K_{\psi_q}$, which amounts to showing that
\begin{equation}
\psi_q(a^*f^*fa)=0,\,\,\,\forall\,f\in C_{0,q}(Q),\,a\in A.\label{simple-dom-claim}
\end{equation}
However, since the inclusion $C_0(Q)\subset A$ is central, for any $f\in C_{0,q}(Q)$, $a\in A$, we have
$$0\leq \psi_q(a^*f^*fa)=
\psi_q(f^*a^*af)\leq\psi_q(\|a\|^2f^*f)=\text{ev}_q(\|a\|^2f^*f)=\|a\|^2\cdot|f(q)|^2=0,$$
and \eqref{simple-dom-claim} follows.

Having proved the Claim, all we have to do is to show that the collection
$\Phi=\{\text{ev}_q\}_{q\in Q_0}$ satisfies condition (ii) from Theorem \ref{rel-abs-uniqueness}, when applied to the ideal 
$\bigcap_{q\in Q_0}J^{\text{\rm unif}}_q$.
But if we start with a $A$-lift $\Psi=\{\psi_q\}_{q\in Q_0}$ for $\Phi$ (by way of notation assuming $\psi_q\big|_{C_0(Q)}=
\text{ev}_q$, $\forall\,q\in Q_0$), then by the Claim we know that
$K_{\psi_q}=J^{\text{unif}}_q$, $\forall\,q\in Q_0$, which clearly implies
$K_\Psi=\bigcap_{q\in Q_0}K_{\psi_q}=\bigcap_{q\in Q_0}J^{\text{\rm unif}}_q.$
\end{proof}

\begin{corollary}\label{simple-essential}
Suppose $C_0(Q)\subset A$ is a central non-degenerate inclusion.
If, using the notations from Theorem \ref{simple-dom}, 
the set $Q^{\text{\rm unif}}_{\text{\rm simple}}$ is non-empty, and satisfies
\begin{equation}
\bigcap_{q\in Q^{\text{\rm unif}}_{\text{\rm simple}}}J^{\text{\rm unif}}_q=\{0\},
\label{simple-essential-eq}
\end{equation}
then the inclusion $C_0(Q)\subset A$ is essential.
\end{corollary}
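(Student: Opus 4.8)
The plan is to obtain this corollary as an essentially immediate consequence of Theorem \ref{simple-dom}, combined with the characterization of essentiality in terms of relative dominance of the zero ideal. The whole argument is a matter of feeding the right data into machinery that is already in place, so I expect no genuine obstacle; the only care required is bookkeeping.

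First I would recall, from the Remark following Definitions \ref{def-sub-dom}, that a non-degenerate $C^*$-inclusion $C_0(Q)\subset A$ is \emph{essential} precisely when the zero ideal $\{0\}$ is dominant relative to $C_0(Q)$. Thus the entire task reduces to exhibiting $\{0\}$ as an ideal that is dominant relative to $C_0(Q)$.

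Next I would invoke Theorem \ref{simple-dom} with the choice $Q_0 = Q^{\text{unif}}_{\text{simple}}$. This choice is legitimate precisely because of the standing assumption that $Q^{\text{unif}}_{\text{simple}}$ is non-empty, which is exactly the non-emptiness hypothesis required by that theorem. The theorem then yields that the ideal $\bigcap_{q\in Q^{\text{unif}}_{\text{simple}}} J^{\text{unif}}_q$ is dominant relative to $C_0(Q)$. By the standing hypothesis \eqref{simple-essential-eq}, this intersection is exactly $\{0\}$, so the zero ideal is dominant relative to $C_0(Q)$, and by the Remark just cited the inclusion $C_0(Q)\subset A$ is essential.

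As noted, there is essentially no obstacle: all the substantive content has already been absorbed into Theorem \ref{simple-dom}, whose proof established the key Claim that any state extension of $\text{ev}_q$ to $A$ has its two-sided kernel equal to $J^{\text{unif}}_q$ whenever $q\in Q^{\text{unif}}_{\text{simple}}$. The only points requiring attention are to supply the correct subset $Q_0 = Q^{\text{unif}}_{\text{simple}}$ to that theorem, and to remember the equivalence (essentiality $\Leftrightarrow$ dominance of $\{0\}$) furnished by the Remark.
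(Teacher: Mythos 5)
Your proposal is correct and is exactly the argument the paper intends (the paper states this as an immediate corollary of Theorem \ref{simple-dom} without a written proof): apply that theorem with $Q_0=Q^{\text{unif}}_{\text{simple}}$, use hypothesis \eqref{simple-essential-eq} to identify the dominant ideal with $\{0\}$, and conclude via the Remark after Definitions \ref{def-sub-dom} that dominance of the zero ideal relative to $C_0(Q)$ is precisely essentiality. No gaps; nothing further is needed.
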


\begin{nagyrevenv}
When trying to check the hypotheses in Corollary \ref{simple-essential}, a reasonable guess for a sufficient condition would be the {\em density\/} of $Q^{\text{unif}}_{\text{simple}}$ in $Q$. However, in part due to lack of continuity
of some of the maps \eqref{punif-map}, particularly if one of the inequalities \eqref{p=usc} is strict, one cannot expect this to be the case, as illustrated by the following.

\begin{example}
Let $A=C([0,2])$, and consider the inclusion $C([0,1])\subset A$ defined by extending every continuous function
$f:[0,1]\to\mathbb{C}$ to a continuous function on $[0,2]$, which is constant on $[1,2]$.
Clearly, $J^{\text{unif}}_q=\{f\in C([0,2])\,:\,f(q)=0\}$, for every $q\in [0,1)$, but
$J^{\text{unif}}_1=\{f\in C([0,2])\,:\,f\big|_{[1,2]}=0\}$, so
$Q^{\text{unif}}_{\text{simple}}=[0,1)$ is indeed dense in $Q=[0,1]$, but
the ideal $J^{\text{unif}}_{[0,1)}=\{f\in C([0,2])\,:\,f\big|_{[0,1]}=0\}$ is not $\{0\}$.
Coincidentally, the same ideal $J^{\text{unif}}_{[0,1)}$ satisfies $J^{\text{unif}}_{[0,1)}\cap C([0,1])=\{0\}$, which
also means that $C([0,1])\subset A$ is non-essential.
\end{example}
\end{nagyrevenv}

On way to ameliorate the above complication -- as shown in Theorem \ref{simple-essential-CE} below, as well as the failure of continuity for the maps \eqref{punif-map} -- as seen in Proposition \ref{cont-prop} below, is to use {\em conditional expectations}.

\begin{notations}
Assume $C_0(Q)\subset A$ is a non-degenerate central inclusion, and $\mathbf{E}$
 is a conditional expectation of  $A$ onto $C_0(Q)$.
For every $q\in Q$, consider the composition
$\text{ev}_q\circ\mathbf{E}$, which is a state on $A$, and its associated GNS representation
$\Gamma_{\text{ev}_q\circ\mathbf{E}}:A\to\mathscr{B}(L^2(A,\text{ev}_q\circ\mathbf{E}))$. 
The associated $C^*$-seminorm 
$A\ni a\longmapsto\left\|\Gamma_{\text{ev}_q\circ\mathbf{E}}(a)\right\|\in [0,\infty)$ will be denoted by
$p^{\mathbf{E}}_q$.
\end{notations}

\begin{remark}
Since $\mathbf{E}$ is $C_0(Q)$-linear, for every $q\in Q$ we have
$$\text{ev}_q\circ\mathbf{E}(fa)= 
\text{ev}_q\circ\mathbf{E}(af)=0,\,\,\,\forall\,f\in C_{0,q}(Q),\,a\in A,$$
thus $J^{\text{unif}}_q$ is contained in the kernel of the GNS representation
$\Gamma_{\text{ev}_q\circ \mathbf{E}}$, which means that
\begin{equation}
\label{pEleqpu}
p^{\mathbf{E}}_q(a)\leq p^{\text{unif}}_q(a),\,\,\,\forall\,q\in Q,\,a\in A.
\end{equation}
\end{remark}

\begin{theorem}\label{simple-essential-CE}
Assume $C_0(Q)\subset A$ is a non-degenerate central inclusion, such that $Q^{\text{\rm unif}}_{\text{\rm simple}}$ is dense in $Q$. If there exists an essentially faithful conditional expectation of $\mathbf{E}:A\to C_0(Q)$, then the inclusion $C_0(Q)\subset A$ is essential.
\end{theorem}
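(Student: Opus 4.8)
The plan is to deduce the result from Corollary \ref{simple-essential}: since $Q^{\text{unif}}_{\text{simple}}$ is assumed dense in $Q$ (hence non-empty), it suffices to verify the vanishing condition \eqref{simple-essential-eq}, i.e. that the ideal $N=\bigcap_{q\in Q^{\text{unif}}_{\text{simple}}}J^{\text{unif}}_q$ equals $\{0\}$. The essentially faithful conditional expectation will be used precisely to upgrade the mere \emph{density} of $Q^{\text{unif}}_{\text{simple}}$ into genuine vanishing on \emph{all} of $Q$ --- something the Example above shows is impossible using only the upper semicontinuous seminorms $p^{\text{unif}}_q$.

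I would first reformulate the hypothesis on $\mathbf{E}$. For each $q$ write $\psi_q=\text{ev}_q\circ\mathbf{E}$, a state on $A$, so that $p^{\mathbf{E}}_q(a)=\|\Gamma_{\psi_q}(a)\|$ and $K_{\text{ev}_q\circ\mathbf{E}}=\ker\Gamma_{\psi_q}=\{a:p^{\mathbf{E}}_q(a)=0\}$. Since a function in $C_0(Q)$ vanishes iff all of its point evaluations do, one checks directly that $K_{\mathbf{E}}=\bigcap_{q\in Q}K_{\text{ev}_q\circ\mathbf{E}}$; thus essential faithfulness of $\mathbf{E}$ (that is, $K_{\mathbf{E}}=\{0\}$) says exactly that for every non-zero $a\in A$ there is some $q\in Q$ with $p^{\mathbf{E}}_q(a)>0$.

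The crux is the following semicontinuity statement: for each fixed $a\in A$, the map $q\mapsto p^{\mathbf{E}}_q(a)$ is \emph{lower} semicontinuous on $Q$. To prove it I would use the GNS description
$$p^{\mathbf{E}}_q(a)=\sup\Big\{\,|\mathbf{E}(y^*ax)(q)|\ :\ \mathbf{E}(x^*x)(q)\le 1,\ \mathbf{E}(y^*y)(q)\le 1,\ x,y\in A\,\Big\}.$$
Fix $q_0$ with $p^{\mathbf{E}}_{q_0}(a)>0$ (the case $p^{\mathbf{E}}_{q_0}(a)=0$ being trivial, as $p^{\mathbf{E}}_q(a)\ge 0$), and let $\epsilon\in(0,p^{\mathbf{E}}_{q_0}(a))$. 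Choose $x,y$ realizing the supremum at $q_0$ to within $\epsilon$; a Cauchy--Schwarz estimate for the state $\psi_{q_0}$ forces both denominators to be \emph{strictly} positive at $q_0$ (otherwise $\mathbf{E}(y^*ax)(q_0)=0$), so the single ratio $q\mapsto |\mathbf{E}(y^*ax)(q)|\big/\big(\mathbf{E}(y^*y)(q)^{1/2}\mathbf{E}(x^*x)(q)^{1/2}\big)$ is continuous near $q_0$ and bounds $p^{\mathbf{E}}_q(a)$ from below. Letting $q\to q_0$ and then $\epsilon\to 0$ yields $\liminf_{q\to q_0}p^{\mathbf{E}}_q(a)\ge p^{\mathbf{E}}_{q_0}(a)$. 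This lower semicontinuity --- in sharp contrast with the upper semicontinuity \eqref{p=usc} of the seminorms $p^{\text{unif}}_q$ --- is exactly the feature supplied by the conditional expectation, and I expect it to be the main obstacle, the delicate point being the $q$-dependent normalization inside the supremum.

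Finally I would assemble the argument. Take $a\in N$. For every $q\in Q^{\text{unif}}_{\text{simple}}$ we have $a\in J^{\text{unif}}_q$, hence $p^{\text{unif}}_q(a)=0$, hence $p^{\mathbf{E}}_q(a)=0$ by \eqref{pEleqpu}. Thus the nonnegative, lower semicontinuous function $q\mapsto p^{\mathbf{E}}_q(a)$ vanishes on the dense set $Q^{\text{unif}}_{\text{simple}}$; for a nonnegative lower semicontinuous function this forces vanishing on all of $Q$ (were it positive at some $q_0$, the open set on which it exceeds half that value would meet $Q^{\text{unif}}_{\text{simple}}$, where it is $0$ --- a contradiction). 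Therefore $p^{\mathbf{E}}_q(a)=0$ for every $q\in Q$, i.e. $a\in\bigcap_{q\in Q}K_{\text{ev}_q\circ\mathbf{E}}=K_{\mathbf{E}}=\{0\}$. Hence $N=\{0\}$, and Corollary \ref{simple-essential} gives that the inclusion $C_0(Q)\subset A$ is essential.
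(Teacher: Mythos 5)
Your proof is correct, but at the decisive step it takes a genuinely different route from the paper's. Both arguments reduce, via Corollary \ref{simple-essential}, to showing that the intersection ideal over $Q^{\text{unif}}_{\text{simple}}$ vanishes, and both pass from the ideals $J^{\text{unif}}_q$ to the larger kernels $\ker p^{\mathbf{E}}_q=K_{\text{ev}_q\circ\mathbf{E}}$ using \eqref{pEleqpu}. From there the paper is more economical: for $x$ in $H=\bigcap_{q\in Q^{\text{unif}}_{\text{simple}}}K_{\text{ev}_q\circ\mathbf{E}}$, the \emph{single function} $\mathbf{E}(x)\in C_0(Q)$ vanishes on the dense set $Q^{\text{unif}}_{\text{simple}}$, hence $\mathbf{E}(x)=0$ by ordinary continuity of a $C_0(Q)$-function; thus $H$ is a closed two-sided ideal on which $\mathbf{E}$ vanishes, and essential faithfulness ($K_{\mathbf{E}}=\{0\}$) forces $H=\{0\}$. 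You instead prove lower semicontinuity of $q\longmapsto p^{\mathbf{E}}_q(a)$ for every $a\in A$ --- this is precisely the paper's Lemma \ref{lsc}, which in the paper appears only \emph{after} the theorem and is reserved for Proposition \ref{cont-prop} --- and then combine lower semicontinuity with density to get $p^{\mathbf{E}}_q(a)=0$ for \emph{all} $q\in Q$, finishing with the (correct) identification $\bigcap_{q\in Q}K_{\text{ev}_q\circ\mathbf{E}}=K_{\mathbf{E}}=\{0\}$. Your semicontinuity argument, via the normalized bilinear GNS supremum with Cauchy--Schwarz guaranteeing strictly positive denominators at $q_0$, is sound and essentially equivalent to the paper's proof of Lemma \ref{lsc}, which uses the one-vector inequality $\text{ev}_{q_0}\circ\mathbf{E}(x^*a^*ax)>s^2\cdot\text{ev}_{q_0}\circ\mathbf{E}(x^*x)>0$ propagated to a neighborhood of $q_0$. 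What each approach buys: the paper's version needs nothing beyond continuity of a single element of $C_0(Q)$ together with the ideal property of $H$, and takes a few lines; yours costs a full lemma but makes explicit the structural mechanism (the fiberwise seminorms $p^{\mathbf{E}}_q$ are lower semicontinuous, whereas $p^{\text{unif}}_q$ is only upper semicontinuous) that the Example preceding the theorem shows is genuinely needed, and that lemma is required later in the paper anyway.
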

\begin{proof}
Consider the ideal 
$$H=\bigcap_{q\in Q^{\text{unif}}_{\text{simple}}}K_{\text{ev}_q\circ \mathbf{E}}=
\bigcap_{q\in Q^{\text{unif}}_{\text{simple}}}\text{ker}\,p^{\mathbf{E}}_q$$
(i.e. the intersection of the kernel of the GNS representations $\Gamma_{\text{ev}_q\circ\mathbf{E}}$,
$q\in Q^{\text{unif}}_{\text{simple}}$). 
On the one hand, by \eqref{pEleqpu}, it follows that
$$\bigcap_{q\in Q^{\text{unif}}_{\text{simple}}}J^{\text{unif}}_q\subset H,$$
so in order to draw the desired conclusion, by Corollary 
\ref{simple-essential} it suffices to show that $H$ is the zero ideal.

On the other hand, by construction, for every $x\in H$, the function
$\mathbf{E}(x)\in C_0(Q)$ vanishes on $Q^{\text{unif}}_{\text{simple}}$, which is dense in $Q$, thus by continuity
$\mathbf{E}(x)=0$. In other words, $H$ is a closed two-sided ideal contained in $\text{ker}\,\mathbf{E}$, so by
 the essential faithfulness of $\mathbf{E}$, $H$ is indeed zero ideal.
\end{proof}

\begin{remark}
The required essential faithfulness of $\mathbf{E}$ should not be regarded as too stringent, since for an essential 
non-degenerate inclusion $B\subset A$, it follows that {\em all conditional expectations $\mathbf{E}:A\to B$ (if any) are essentially faithful}.
\end{remark}

\begin{remark}
By definition, it is pretty obvious that, for a non-degenerate central inclusion $C_0(Q)\subset A$, the 
\nagyrev{essential} faithfulness of a conditional expectation
 $\mathbf{E}:A\to C_0(Q)$ is equivalent to:
\begin{equation}
\|a\|=\sup_{q\in Q}p^\mathbf{E}_q(a),\,\,\,\forall\,a\in A.
\label{norm-sup-pE}
\end{equation}
\end{remark}

As to the usefulness of conditional expectations in the issues dealing with \nagyrev{continuity properties of} the map \eqref{punif-map}, we have the
result below, which can be traced to \cite[Lemma 1.11]{Nagy}; due to its elementary nature, we include a proof for the benefit of the reader.

\begin{lemma}\label{lsc}
Suppose $C_0(Q)\subset A$ is a non-degenerate central inclusion, and $\mathbf{E}:A\to C_0(Q)$ is a conditional expectation.
For every $a\in A$, the map
\begin{equation}\label{pE-map}
Q\ni q\longmapsto p^{\mathbf{E}}_q(a)\in [0,\infty)
\end{equation}
is lower semicontinuous, in the sense that
\begin{equation}
\liminf_{q\to q_0}p^{\mathbf{E}}_q(a)\geq
p^{\mathbf{E}}_{q_0}(a),\,\,\,\forall\,q_0\in Q.
\label{p=lsc}
\end{equation}
\end{lemma}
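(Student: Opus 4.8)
The plan is to realize $p^{\mathbf{E}}_q(a)$ as a pointwise supremum of matrix coefficients whose superlevel sets $\{q:p^{\mathbf{E}}_q(a)>s\}$ are open, which is exactly lower semicontinuity. The mechanism is that a single element of $A$ produces a vector simultaneously in every GNS space $L^2(A,\text{ev}_q\circ\mathbf{E})$, and the relevant inner products depend continuously on $q$ because they are evaluations of functions in $C_0(Q)$. Concretely, writing $\Gamma_q$ for $\Gamma_{\text{ev}_q\circ\mathbf{E}}$ and $\hat{x}$ for the image of $x\in A$ in a given GNS space, I would attach to each fixed pair $x,y\in A$ the three scalar functions $q\mapsto\mathbf{E}(y^*ax)(q)=\langle\hat{y}\mid\Gamma_q(a)\hat{x}\rangle_q$, $q\mapsto\mathbf{E}(x^*x)(q)=\|\hat{x}\|_q^2$, and $q\mapsto\mathbf{E}(y^*y)(q)=\|\hat{y}\|_q^2$; each belongs to $C_0(Q)$ and is therefore continuous on $Q$.

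The first step is the operator-norm bound coming from Cauchy--Schwarz: whenever the two denominators below are nonzero,
\[
\frac{|\mathbf{E}(y^*ax)(q)|}{\sqrt{\mathbf{E}(x^*x)(q)}\,\sqrt{\mathbf{E}(y^*y)(q)}}\ \leq\ \|\Gamma_q(a)\|\ =\ p^{\mathbf{E}}_q(a).
\]
The second step is to see that this bound is saturated in the supremum at any chosen basepoint. Fix $q_0\in Q$ and $s<p^{\mathbf{E}}_{q_0}(a)=\|\Gamma_{q_0}(a)\|$. Choosing near-optimal unit vectors for $\Gamma_{q_0}(a)$ and using that the image of $A$ is dense in $L^2(A,\text{ev}_{q_0}\circ\mathbf{E})$, I can select a single pair $x,y\in A$ for which $\mathbf{E}(x^*x)(q_0)>0$, $\mathbf{E}(y^*y)(q_0)>0$, and the displayed ratio evaluated at $q_0$ already exceeds $s$.

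The conclusion then follows by a one-variable continuity argument with the pair $x,y$ now held fixed. Since $\mathbf{E}(x^*x)$ and $\mathbf{E}(y^*y)$ are continuous and strictly positive at $q_0$, the single ratio $R_{x,y}(q)$ formed from these three continuous functions is continuous on some neighborhood $U$ of $q_0$ where both denominators stay positive; shrinking $U$, we keep $R_{x,y}(q)>s$ throughout $U$. Combined with the first-step inequality $R_{x,y}(q)\leq p^{\mathbf{E}}_q(a)$, this yields $p^{\mathbf{E}}_q(a)>s$ for all $q\in U$. Hence $\{q:p^{\mathbf{E}}_q(a)>s\}$ is open, and since $s<p^{\mathbf{E}}_{q_0}(a)$ was arbitrary we obtain $\liminf_{q\to q_0}p^{\mathbf{E}}_q(a)\geq p^{\mathbf{E}}_{q_0}(a)$.

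I expect the main subtlety to be the $q$-dependence of the GNS Hilbert spaces and, specifically, the possible vanishing of the denominators $\mathbf{E}(x^*x)(q)$: a naive ``supremum of continuous functions'' argument is obstructed because each ratio is defined only where its denominator is positive, and those domains move with $q$. The point that rescues lower semicontinuity --- and that visibly distinguishes this situation from the \emph{upper} semicontinuity of $p^{\text{unif}}_q$ --- is that a lower bound on an operator norm is witnessed by a \emph{single} fixed vector pair, so I need only track finitely many scalar functions near $q_0$ rather than control all vectors uniformly.
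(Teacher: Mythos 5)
Your proof is correct and takes essentially the same route as the paper: the paper also establishes lower semicontinuity by showing the superlevel sets $\{q\in Q : p^{\mathbf{E}}_q(a)>s\}$ are open, fixing a single witness $x\in A$ with $\mathbf{E}(x^*a^*ax)(q_0)>s^2\,\mathbf{E}(x^*x)(q_0)>0$ and using the continuity of the $C_0(Q)$-valued quantities $\mathbf{E}(x^*a^*ax)$ and $\mathbf{E}(x^*x)$ to propagate the strict inequality to a neighborhood of $q_0$. Your two-vector matrix-coefficient ratio $R_{x,y}$ is a cosmetic variant of this (taking $y=ax$ essentially recovers the paper's one-vector inequality), with only the harmless extra bookkeeping of keeping both denominators positive near $q_0$.
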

\begin{proof}
An equivalent characterization of \eqref{p=lsc} is the fact that all sets
$D_s=\{q\in Q\,:\,p^{\mathbf{E}}_q(a)>s\}$, $s>0$, are open. 
Fix $a\in A$, $s>0$, as well some $q_0\in D_s$, and let us justify that $q_0$ is also in the interior of $D_s$.

By the definition of the GNS representation $\Gamma_{\text{ev}_q\circ \mathbf{E}}$, the condition
$p^{\mathbf{E}}_{q_0}(a) >s$ implies the existence of some $x\in A$, such that
$$
\text{ev}_{q_0}\circ\mathbf{E}(x^*a^*ax) > s^2\cdot \text{ev}_{q_0}\circ\mathbf{E}(x^*x)>0.$$
Since $\mathbf{E}$ is $C_0(Q)$-valued, by continuity, it follows that there exists a whole neighborhood $V$ of $q_0$ in 
$Q$, such that
$$
\text{ev}_{q}\circ\mathbf{E}(x^*a^*ax) > s^2\cdot\text{ev}_{q}\circ\mathbf{E}(x^*x)>0,\,\,\,\forall\,q\in V,$$
which in turn implies
$p^{\mathbf{E}}_{q}(a) >s$, $\forall\,q\in V$.
\end{proof}

\begin{nagyrevenv}
In preparation for Proposition \ref{cont-prop} below, we identify the following
uniqueness property of the seminorms $p^\text{unif}_q$,
in conjunction with \eqref{norm-sup-punif} and \eqref{p=usc}.

\begin{lemma}\label{usc-uniqueness}
Assume $C_0(Q)\subset A$ is a central non-degenerate inclusion, and $(\mathbf{p}_q)_{q\in Q}$ is a family of $C^*$-seminorms on $A$, satisfying
\begin{itemize}
\item[$(a)$] $\|a\|=\sup_{q\in Q}\mathbf{p}_q(a)$, $\forall\,a\in A$;
\item[$(b)$] $\mathbf{p}_q(fa)=|f(q)|\cdot \mathbf{p}_q(a)$, $\forall\,a\in A$, $f\in C_0(Q)$.
\end{itemize}
If $a\in A$, $q_0\in Q$ satisfy
\begin{equation}
\limsup_{q\to q_0}\mathbf{p}_q(a)\leq\mathbf{p}_{q_0}(a),\label{boldp=usc}
\end{equation}
then $\mathbf{p}_{q_0}(a)=p^\text{\rm unif}_{q_0}(a)$.
\end{lemma}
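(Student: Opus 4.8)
The plan is to establish the two inequalities $\mathbf{p}_{q_0}(a)\le p^\text{unif}_{q_0}(a)$ and $p^\text{unif}_{q_0}(a)\le\mathbf{p}_{q_0}(a)$ separately; only the second will use the upper semicontinuity hypothesis \eqref{boldp=usc}. Throughout I will invoke the description of the uniform seminorm from \eqref{punif=inf}, in the form
$$p^\text{unif}_{q_0}(a)=\inf\{\|fa\|:f\in C_0(Q),\ 0\le f\le 1=f(q_0)\},$$
together with the Urysohn-type fact that, for any open neighborhood $V$ of $q_0$, such an $f$ can be chosen with $\operatorname{supp}f\subset V$ (using that $Q$ is locally compact Hausdorff).

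For the inequality $\mathbf{p}_{q_0}(a)\le p^\text{unif}_{q_0}(a)$, I fix any $f$ as above. Property $(b)$ gives $\mathbf{p}_{q_0}(fa)=|f(q_0)|\,\mathbf{p}_{q_0}(a)=\mathbf{p}_{q_0}(a)$, while property $(a)$ gives $\mathbf{p}_{q_0}(fa)\le\sup_{q}\mathbf{p}_q(fa)=\|fa\|$. Hence $\mathbf{p}_{q_0}(a)\le\|fa\|$, and taking the infimum over all admissible $f$ yields $\mathbf{p}_{q_0}(a)\le p^\text{unif}_{q_0}(a)$. This half is entirely general: it shows $\mathbf{p}_q\le p^\text{unif}_q$ for \emph{every} family satisfying $(a)$ and $(b)$, reflecting that $p^\text{unif}$ is the largest such family.

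For the reverse inequality, I bring in \eqref{boldp=usc}. Given $\epsilon>0$, upper semicontinuity supplies an open neighborhood $V$ of $q_0$ with $\mathbf{p}_q(a)\le\mathbf{p}_{q_0}(a)+\epsilon$ for all $q\in V$. Choosing $f$ as above with $\operatorname{supp}f\subset V$, I compute, using $(a)$ and then $(b)$,
$$p^\text{unif}_{q_0}(a)\le\|fa\|=\sup_{q\in Q}\mathbf{p}_q(fa)=\sup_{q\in Q}|f(q)|\,\mathbf{p}_q(a).$$
Since $f$ vanishes off $\operatorname{supp}f\subset V$, every nonzero term in the supremum has $q\in V$, where $|f(q)|\le 1$ and $\mathbf{p}_q(a)\le\mathbf{p}_{q_0}(a)+\epsilon$; hence the supremum is at most $\mathbf{p}_{q_0}(a)+\epsilon$. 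Letting $\epsilon\to 0$ gives $p^\text{unif}_{q_0}(a)\le\mathbf{p}_{q_0}(a)$, and combining with the first inequality finishes the proof.

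The only genuinely delicate point — and the step I expect to be the crux — is the localization in the reverse inequality: one must convert the upper semicontinuity control on $\mathbf{p}_q(a)$, valid only near $q_0$, into a global bound on $\|fa\|=\sup_q|f(q)|\,\mathbf{p}_q(a)$, which is precisely why $f$ must be chosen supported inside the neighborhood $V$ furnished by \eqref{boldp=usc}. The appeal to \eqref{punif=inf}, expressing $p^\text{unif}_{q_0}(a)$ as an infimum of norms $\|fa\|$ over functions peaking at $q_0$, is what makes the two halves align.
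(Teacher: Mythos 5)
Your proof is correct and follows essentially the same route as the paper: the easy inequality $\mathbf{p}_{q_0}(a)\le p^{\text{unif}}_{q_0}(a)$ from properties $(a)$ and $(b)$ alone, and then the reverse inequality by choosing, for each $\varepsilon>0$, a Urysohn function $f$ supported in the neighborhood $V$ furnished by \eqref{boldp=usc} and estimating $\|fa\|=\sup_q f(q)\,\mathbf{p}_q(a)\le \mathbf{p}_{q_0}(a)+\varepsilon$ via \eqref{punif=inf}. Your identification of the localization of $\operatorname{supp} f$ inside $V$ as the crux matches the paper's argument exactly.
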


\begin{proof}
First of all, using the hypotheses $(a)$ and $(b)$, for any $f\in C_0(Q)$ with $0\leq f\leq 1=f(q_0)$, we have
$$\|fa\|\geq \mathbf{p}_{q_0}(fa)=|f(q_0)|\cdot\mathbf{p}_{q_0}(a)=\mathbf{p}_{q_0}(a),$$
so by \eqref{punif=inf}, we clearly have the inequality $p^\text{unif}_{q_0}(a)\geq 
\mathbf{p}_{q_0}(a)$. For the other inequality, fix for the moment $\varepsilon>0$ and use \eqref{boldp=usc} to produce some open set
$V_\varepsilon\ni q_0$, such that $\mathbf{p}_q(a)\leq \mathbf{p}_{q_0}(a)+\varepsilon$, $\forall\,q\in V$.
Fix also some $f_\varepsilon\in C_c(V)$ with $0\leq f_\varepsilon\leq 1=f_\varepsilon(q_0)$, and observe that, for every $q\in Q$, we have
$$\mathbf{p}_q(f_\varepsilon a)=
f_\varepsilon(q)\cdot \mathbf{p}_q(a)
\leq f_\varepsilon(q)\left(\mathbf{p}_{q_0}(a)+\varepsilon\right)
\leq \mathbf{p}_{q_0}(a)+\varepsilon,$$
which by $(a)$ implies $\|f_\varepsilon a\|\leq \mathbf{p}_{q_0}(a)+\varepsilon$, thus by
\eqref{punif=inf} we obtain $p^\text{unif}_{q_0}(a)\leq\mathbf{p}_{q_0}(a)+\varepsilon$, and we are done by letting 
$\varepsilon\to 0$.
\end{proof}
\end{nagyrevenv}

\begin{proposition}\label{cont-prop}
Assume $C_0(Q)\subset A$ is a central non-degenerate inclusion and $\mathbf{E}:A\to C_0(Q)$ is an essentially 
faithful conditional expectation. For a point $q_0$, the following conditions are equivalent:
\begin{itemize}
\item[(i)] the function \eqref{pE-map} is continuous at $q_0$, for each $a\in A$;
\item[(i')] there is some dense subset $A_0\subset A$, such that the function
 \eqref{pE-map} is continuous at $q_0$, for each $a\in A_0$;
\item[(i'')] there is some dense subset $A_0\subset A$, such that
$\limsup_{q\to q_0}p^\mathbf{E}_q(a)\leq p^\mathbf{E}_{q_0}(a)$, $\forall\,a\in A_0$;
\item[(ii)] $p^\text{\rm unif}_{q_0}(a)=p^\mathbf{E}_{q_0}(a)$, $\forall\, a\in A$;
\item[(ii')] there is some dense subset $A_0\subset A$, such that
$p^\text{\rm unif}_{q_0}(a)=p^\mathbf{E}_{q_0}(a)$, $\forall\, a\in A_0$.
\end{itemize}
\end{proposition}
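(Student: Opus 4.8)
The plan is to establish the single cycle of implications
$(i)\Rightarrow(i')\Rightarrow(i'')\Rightarrow(ii')\Rightarrow(ii)\Rightarrow(i)$,
which closes the loop through all five conditions. The first two links are immediate: $(i)\Rightarrow(i')$ by taking $A_0=A$, and $(i')\Rightarrow(i'')$ because continuity of \eqref{pE-map} at $q_0$ forces $\lim_{q\to q_0}p^{\mathbf{E}}_q(a)=p^{\mathbf{E}}_{q_0}(a)$, which in particular yields the one-sided $\limsup$ inequality of $(i'')$. The remaining three links carry the content, and each exploits a different one of the semicontinuity facts already recorded in the preceding lemmas.

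Before running the cycle I would record the key structural observation that the family $(p^{\mathbf{E}}_q)_{q\in Q}$ satisfies the two hypotheses $(a)$ and $(b)$ of Lemma \ref{usc-uniqueness}. Hypothesis $(a)$, i.e. $\|a\|=\sup_{q}p^{\mathbf{E}}_q(a)$, is exactly equation \eqref{norm-sup-pE}, which holds precisely because $\mathbf{E}$ is assumed essentially faithful. Hypothesis $(b)$, i.e. $p^{\mathbf{E}}_q(fa)=|f(q)|\,p^{\mathbf{E}}_q(a)$, follows from a short GNS computation: writing $\omega_q=\text{ev}_q\circ\mathbf{E}$ and $\Gamma=\Gamma_{\omega_q}$, for any $x\in A$ the centrality of $f$ together with the $C_0(Q)$-bilinearity of $\mathbf{E}$ give $\|\Gamma(fa)\hat x\|^2=\omega_q(x^*a^*f^*fax)=|f(q)|^2\,\omega_q(x^*a^*ax)=|f(q)|^2\|\Gamma(a)\hat x\|^2$, and taking the supremum over the dense set of vectors $\hat x$ yields $(b)$.

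With this in hand the three substantive links proceed as follows. For $(i'')\Rightarrow(ii')$ I would apply Lemma \ref{usc-uniqueness} to the family $(p^{\mathbf{E}}_q)$: for each $a\in A_0$ the hypothesis $(i'')$ is precisely condition \eqref{boldp=usc}, so the Lemma returns $p^{\mathbf{E}}_{q_0}(a)=p^{\text{unif}}_{q_0}(a)$, which is $(ii')$ for the same dense set $A_0$. For $(ii')\Rightarrow(ii)$ I would use that both $p^{\text{unif}}_{q_0}$ and $p^{\mathbf{E}}_{q_0}$ are $C^*$-seminorms dominated by $\|\cdot\|$, hence $1$-Lipschitz and so continuous on $A$; two continuous functions agreeing on the dense set $A_0$ agree everywhere. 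Finally, for the crucial link $(ii)\Rightarrow(i)$ I would combine the two opposite semicontinuities: from \eqref{pEleqpu} together with the upper semicontinuity \eqref{p=usc} of $p^{\text{unif}}$ and the hypothesis $(ii)$ we get $\limsup_{q\to q_0}p^{\mathbf{E}}_q(a)\le\limsup_{q\to q_0}p^{\text{unif}}_q(a)\le p^{\text{unif}}_{q_0}(a)=p^{\mathbf{E}}_{q_0}(a)$, while Lemma \ref{lsc} (equation \eqref{p=lsc}) supplies the reverse inequality $\liminf_{q\to q_0}p^{\mathbf{E}}_q(a)\ge p^{\mathbf{E}}_{q_0}(a)$; together these give continuity at $q_0$ for every $a\in A$.

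The only genuinely non-formal point is the verification of hypothesis $(b)$ for $(p^{\mathbf{E}}_q)$, which is what lets Lemma \ref{usc-uniqueness} upgrade the one-sided $\limsup$ bound of $(i'')$ to the sharp equality $(ii')$; once that is in place the rest is a matter of lining up the upper semicontinuity of $p^{\text{unif}}$, the lower semicontinuity of $p^{\mathbf{E}}$, and the domination \eqref{pEleqpu} in the right order. I therefore expect no real obstacle beyond bookkeeping, the main care being to keep track of which direction of semicontinuity each of \eqref{p=usc}, \eqref{p=lsc}, and \eqref{pEleqpu} contributes at each step.
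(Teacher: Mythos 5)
Your proof is correct, and it rests on the same two pillars as the paper's: Lemma \ref{usc-uniqueness} for the hard upgrade from the one-sided $\limsup$ bound to the equality $p^{\mathbf{E}}_{q_0}=p^{\text{unif}}_{q_0}$, and the semicontinuity sandwich (\eqref{p=usc}, \eqref{p=lsc}, \eqref{pEleqpu}) for $(ii)\Rightarrow(i)$. Where you genuinely diverge is in the decomposition. The paper first establishes the internal equivalences $(i)\Leftrightarrow(i')\Leftrightarrow(i'')$ and $(ii)\Leftrightarrow(ii')$, declaring them ``obvious'' from \eqref{norm-sup-punif} and \eqref{norm-sup-pE}; but the implication $(i'')\Rightarrow(i)$ actually hides an equicontinuity argument (the functions $q\mapsto p^{\mathbf{E}}_q(a)$ are uniformly $1$-Lipschitz in $a$, so pointwise continuity at $q_0$ passes from the dense set $A_0$ to all of $A$). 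Your single cycle $(i)\Rightarrow(i')\Rightarrow(i'')\Rightarrow(ii')\Rightarrow(ii)\Rightarrow(i)$ sidesteps that step entirely: you apply Lemma \ref{usc-uniqueness} elementwise on $A_0$ (where hypothesis \eqref{boldp=usc} is exactly condition $(i'')$), and the only density transfer you need is $(ii')\Rightarrow(ii)$, which is trivial because both $p^{\text{unif}}_{q_0}$ and $p^{\mathbf{E}}_{q_0}$ are norm-dominated seminorms, hence $1$-Lipschitz at a \emph{fixed} point $q_0$. Continuity at $q_0$ for all $a\in A$ then drops out of the sandwich in the final link, rather than having to be proven separately. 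Two further merits of your write-up: you verify explicitly the hypothesis $(b)$ of Lemma \ref{usc-uniqueness} for the family $(p^{\mathbf{E}}_q)$ via the GNS computation with centrality of $f$ and $C_0(Q)$-bilinearity of $\mathbf{E}$ (the paper invokes the Lemma without this check), and you correctly flag that hypothesis $(a)$, i.e.\ \eqref{norm-sup-pE}, is precisely where the essential faithfulness of $\mathbf{E}$ enters. The net effect is a proof that is marginally longer but more self-contained, with every density argument isolated at the one place where it is cheapest.
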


\begin{proof}
The equivalences $(i)\Leftrightarrow (i')\Leftrightarrow (i'')$ and
$(ii)\Leftrightarrow (ii')$ are obvious, from \eqref{norm-sup-punif} and \eqref{norm-sup-pE}.

The implication $(ii)\Rightarrow (i)$ is pretty obvious, \nagyrev{since $q\longmapsto p_q^\text{unif}(a)$ is upper semicontouous
\eqref{p=usc} and $q\longmapsto p^\mathbf{E}_q(a)$ is lower semicontinuous \eqref{p=lsc}}.
The converse implication $(i)\Rightarrow (ii)$ follows immediately from Lemma \ref{usc-uniqueness}.
\end{proof}
 
\begin{definition}\label{def-proper}
A point $q_0\in Q$, satisfying the equivalent conditions in the preceding Proposition, will be referred to as a
 \nagyrev{\em point of continuous reduction relative to $\mathbf{E}$}. We denote by 
$Q^{\mathbf{E}}_{\text{r-cont}}$ the set of all such points.
On the one hand, using \eqref{pEleqpu}, it is pretty obvious that one has the inclusion
$Q^{\text{unif}}_{\text{simple}}\subset Q^{\mathbf{E}}_{\text{r-cont}}$. On the other hand,
besides the continuity property (i) above,
the maps \eqref{punif-map} are also continuous at points in $Q^\mathbf{E}_\text{r-cont}$.
\end{definition}

\begin{theorem}\label{r-cont=denseGdelta}
Suppose $C_0(Q)\subset A$ is a non-degenerate central inclusion, and 
$\mathbf{E}:A\to C_0(Q)$ is a conditional expectation.
If $A$ is separable and $\mathbf{E}$ is essentially faithful, then $Q^{\mathbf{E}}_{\text{\rm r-cont}}$
is a dense $G_\delta$-subset of $Q$. 
\end{theorem}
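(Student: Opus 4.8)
The plan is to realize $Q^{\mathbf{E}}_{\text{r-cont}}$ as a countable intersection of open dense sets and then invoke the Baire category theorem. The two structural inputs are that $Q$ is a Baire space (being locally compact Hausdorff) and that, since $A$ is separable, so is its subalgebra $C_0(Q)$; hence $Q$ is second countable and $A$ admits a countable dense subset $\{a_n\}_{n\geq 1}$.

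First I would reduce the problem to single functions. By the equivalence $(i)\Leftrightarrow(i')$ in Proposition \ref{cont-prop}, a point $q_0$ lies in $Q^{\mathbf{E}}_{\text{r-cont}}$ precisely when the map $g_n\colon q\mapsto p^{\mathbf{E}}_q(a_n)$ is continuous at $q_0$ for every $n$: the forward direction uses $(i)$ (continuity for all $a\in A$), while the converse applies $(i')$ to the fixed dense set $A_0=\{a_n\}_n$. Thus
\[
Q^{\mathbf{E}}_{\text{r-cont}}=\bigcap_{n\geq 1}C_n,\qquad C_n=\{q_0\in Q : g_n\text{ is continuous at }q_0\}.
\]

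The heart of the argument is to show that each $C_n$ is a dense $G_\delta$. Here I would use two features of $g_n$: it is lower semicontinuous by Lemma \ref{lsc}, and it is bounded, since $p^{\mathbf{E}}_q(a_n)=\|\Gamma_{\text{ev}_q\circ\mathbf{E}}(a_n)\|\leq\|a_n\|$ for all $q$. For a bounded lower semicontinuous function $g$ on $Q$, the oscillation $\omega(q)=\inf\{\sup_V g-\inf_V g : V\ni q\text{ open}\}$ is exactly the obstruction to continuity, so $C_n=\{\omega=0\}=\bigcap_m\{\omega<1/m\}$, and each set $\{\omega<1/m\}$ is open (if $\omega(q_0)<1/m$ is witnessed by $V$, then $V\subseteq\{\omega<1/m\}$). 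Density of $\{\omega<1/m\}$ is the one genuinely quantitative step: given a nonempty open $W$, set $s=\sup_W g_n<\infty$ and note that $W'=W\cap\{g_n>s-1/(2m)\}$ is nonempty (definition of supremum) and open (lower semicontinuity), while on $W'$ one has $s-1/(2m)<g_n\leq s$, so the oscillation of $g_n$ over $W'$ is at most $1/(2m)<1/m$; hence $\emptyset\neq W'\subseteq\{\omega<1/m\}$. So $\{\omega<1/m\}$ is open and dense, and since $Q$ is Baire, $C_n=\bigcap_m\{\omega<1/m\}$ is a dense $G_\delta$.

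Finally I would assemble the pieces: writing $\omega_n$ for the oscillation of $g_n$, we have $Q^{\mathbf{E}}_{\text{r-cont}}=\bigcap_{n,m}\{q\in Q:\omega_n(q)<1/m\}$, a countable intersection of open dense subsets of the Baire space $Q$, hence a dense $G_\delta$. The main obstacle is the density claim rather than the $G_\delta$ claim: the $G_\delta$ structure falls out immediately from the semicontinuity results already established (Lemma \ref{lsc}, together with the upper semicontinuity recorded in \eqref{p=usc} and the inequality \eqref{pEleqpu}), whereas density genuinely requires both the boundedness of the seminorms $p^{\mathbf{E}}_q(a_n)$ and the Baire property of $Q$. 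It is precisely the density half for which separability of $A$ (equivalently, second countability of $Q$, giving the countable family $\{a_n\}$) is needed.
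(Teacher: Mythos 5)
Your proof is correct, but it takes a genuinely different route from the paper's. The paper works directly with the two seminorms: it fixes a dense sequence $(a_n)$ in $A^+$, perturbed so that $p^{\mathbf{E}}_q(a_n)>0$ for all $q$, writes the complement of $Q^{\mathbf{E}}_{\text{r-cont}}$ as the union of the comparison sets $F_{n,r}=\{q\,:\,p^{\mathbf{E}}_q(a_n)\leq r\cdot p^{\text{unif}}_q(a_n)\}$ over rational $r\in(0,1)$, proves each $F_{n,r}$ is closed using \emph{both} semicontinuities (lower for $p^{\mathbf{E}}$, upper for $p^{\text{unif}}$), and shows empty interior by a bump-function contradiction $0\neq\|a\|\leq r\|a\|$ that rests on the norm formula \eqref{norm-sup-pE}, i.e.\ on essential faithfulness. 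You instead use Proposition \ref{cont-prop} (whose hypothesis of essential faithfulness is where that assumption enters your argument, via the equivalence $(i)\Leftrightarrow(i')$, and ultimately $(i)\Leftrightarrow(ii)$ through Lemma \ref{usc-uniqueness}) to identify $Q^{\mathbf{E}}_{\text{r-cont}}$ with the joint continuity set of the countably many functions $g_n(q)=p^{\mathbf{E}}_q(a_n)$, and then invoke the classical oscillation argument showing that the continuity points of a bounded lower semicontinuous function on a Baire space form a dense $G_\delta$. Your density step is carried out correctly ($W'=W\cap\{g_n>s-\tfrac{1}{2m}\}$ is nonempty, open, and has oscillation at most $\tfrac{1}{2m}$), and your route buys some economy: you never need the positivity adjustment $a_n\mapsto a_n+\tfrac1n f$, nor the upper semicontinuity \eqref{p=usc} of $p^{\text{unif}}$ (only Lemma \ref{lsc} plus the trivial bound $g_n\leq\|a_n\|$), and it cleanly isolates where each hypothesis is used. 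The paper's argument, by contrast, is self-contained at the level of the two seminorms and does not pass through the general semicontinuity-vs-Baire folklore. One cosmetic slip: separability of $A$ is \emph{not} equivalent to second countability of $Q$ (the latter follows from the former via $C_0(Q)\subset A$, but not conversely); what your proof actually uses is the countable dense family $\{a_n\}\subset A$, which is exactly the stated hypothesis, so nothing is affected.
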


\begin{proof}
%
Fix some dense sequence $\{a_n\}_{n\in\mathbb{N}}$ in $A^+$ (the set of positive elements in $A$), which also satisfies 
\begin{equation}
p^{\mathbf{E}}_q(a_n)>0 ,\,\,\,\forall\,q\in Q,\,n\in\mathbb{N}.
\end{equation}
(This can be achieved with the help of a strictly positive function $f\in C_0(Q)$ and replacing, if necessary, $a_n$ with $a_n+\frac{1}nf$. Since $f$ is strictly positive, for each $q\in Q$, it follows that $\Gamma_{\text{ev}_q\circ\mathbf{E}}(f)$ is an invertible positive operator, namely $f(q)I$.)

Define, for every $r\in (0,1)\cap \mathbb{Q}$ and every $n\in\mathbb{N}$, the set
$$F_{n,r}=\{q\in Q\,:\,p^{\mathbf{E}}(a_n)\leq r \cdot p^{\text{unif}}(a_n)\}.$$
By \eqref{pEleqpu} and by density, it is obvious that
$$Q^{\mathbf{E}}_{\text{r-cont}}=Q\smallsetminus \bigcup_{r,n}F_{n,r},$$
so the desired conclusion will follow from Baire's Theorem, once we establish the following:

\begin{claim}
For each $r\in (0,1)\cap \mathbb{Q}$ and $n\in\mathbb{N}$, the set $F_{n,r}$ is closed and has empty interior.
\end{claim} 

The fact that $F_{n,r}$ is closed, is quite clear, since we can write its complement as
\begin{align*}
Q\smallsetminus F_{n,r}&=
\{q\in Q\,:\,p^{\mathbf{E}}_q(a_n)> r\cdot p^{\text{unif}}_q(a_n)\}=\\
&=\bigcup_{t\in (0,\infty)}\underbrace{\{q\in Q\,:\,p^{\mathbf{E}}_q(a_n)>rt\}}_{V_t}\cap
\underbrace{\{q\in Q\,:\,p^{\text{unif}}_q(a_n)<t\}}_{W_t},
\end{align*}
with $V_t$'s open by lower semicontinuity of \eqref{pE-map}, and the $W_t$'s open
by upper semicontinuity of \eqref{punif-map}.

To prove that $F_{n,r}$ has empty interior, argue by contradiction, assuming the existence of some 
non-identically zero non-negative continuous function $h\in C_c(Q)$ with $\text{supp}\,h\subset \text{Int}(F_{n,r})$, which gives us a positive element $0\neq a=ha_n\in A$, such that
\begin{itemize}
\item[$(a)$] $p^{\mathbf{E}}_q(a)=h(q)p^{\mathbf{E}}(a_n)\leq r\cdot h(q)p^{\text{unif}}(a_n)=
r\cdot p^{\text{unif}}_q(a)$, $\forall\,q\in \text{Int}(F_{n,r})$,
\item[$(b)$] $p^{\mathbf{E}}_q(a)=p^{\text{unif}}_q(a)=0$, $\forall\,q\in Q\smallsetminus \text{Int}(F_{n,r})$,
\end{itemize}
thus yielding
\begin{equation}\label{norm<r}
0\neq \|a\|=\sup_{q\in Q}p^{\mathbf{E}}_q(a)\leq r\cdot\sup_{q\in Q}p^{\text{unif}}_q(a)=r\cdot \|a\|,
\end{equation}
which is clearly impossible. (The first equality in \eqref{norm<r} follows from the essential faithfulness of
$\mathbf{E}$.) 
\end{proof}

\section{Application to \`etale group bundles}\label{bundles-sec}

The main example of groupoid $C^*$-algebras that exhibit central inclusions correspond to the so-called
{\em \'etale group bundles\/} (perhaps more suitably referred to as {\em isotropic groupoids\/}), which are those
\'etale groupoids $\mathcal{G}$ on which the range and source maps $r$ and $s$ coincide.
Of course, for such groupoids, we have
$u\mathcal{G}u=u\mathcal{G}=\mathcal{G}u$, $\forall\,u\in\mathcal{G}^{(0)}$, so (by our Convention set forth in 
Section \ref{grp-back}) all
$u\mathcal{G}$'s are countable discrete groups. 

\begin{nagyrevenv}
The discussion below, although limited to \'etale group bundles, is only meant to prepare the reader for Section \ref{grp-sec}, where we will apply our findings to the {\em interior isotropy\/} of arbitary \'etale groupoids.
\end{nagyrevenv}

\begin{convention}
In this (and only in this) section, $\mathcal{G}$ will be assumed to be an \`etale group bundle.
\end{convention}

\begin{remark}
Both inclusions $C_0(\mathcal{G}^{(0)})\subset C^*(\mathcal{G})$ and
$ C_0(\mathcal{G}^{(0)})\subset C_{\text{red}}^*(\mathcal{G})$
are non-degenerate and central.
\end{remark}
\begin{notations}
Fix some unit $u\in\mathcal{G}^{(0)}$. 
We denote 
by $J^{\text{full}}_u\lhd C^*(\mathcal{G})$ the closed two-sided ideal
in $C^*(\mathcal{G})$ generated by $C_{0,u}(\mathcal{G}^{(0)})=\{f\in C_0(\mathcal{G}^{(0)})\,:\,f(u)=0\}$.
(In Section \ref{central-sec}, this ideal was denoted by $J^{\text{unif}}_u$; we use this modified notation in order to distinguish between the two possible central inclusions: $C_0(\mathcal{G}^{(0)})\subset C^*(\mathcal{G})$ and
$C_0(\mathcal{G}^{(0)})\subset C_{\text{red}}^*(\mathcal{G})$.) The ideal
$\pi_{\text{red}}(J^{\text{full}}_u)\lhd C_{\text{red}}^*(\mathcal{G})$ will be denoted by 
$J^{\text{other}}_u$. Equivalently, 
$J^{\text{other}}_u$ is the closed two-sided ideal in $C_{\text{red}}^*(\mathcal{G})$ generated by 
$C_{0,u}(\mathcal{G}^{(0)})$.
The corresponding $C^*$-seminorms on $C^*(\mathcal{G})$ and $C_{\text{red}}^*(\mathcal{G})$ will be denoted by
$p^{\text{full}}_u$ and $p^{\text{other}}_u$, respectively.

As we have a conditional expectation $\mathbb{E}_{\text{red}}:C_{\text{red}}^*(\mathcal{G})
\to C_0(\mathcal{G}^{(0)})$, we also have (as defined in Section \ref{central-sec}) a $C^*$-seminorm
$p^{\mathbb{E}_{\text{red}}}_u$ on $C_{\text{red}}^*(\mathcal{G})$, defined by
$p^{\mathbb{E}_{\text{red}}}_u(a)=\|\Gamma_{\text{ev}_u\circ\mathbb{E}_{\text{red}}}(a)\|$.
On the full $C^*$-algebra, the corresponding $C^*$-seminorm is
$p^{\mathbb{E}}_u=p^{\mathbb{E}_{\text{red}}}_u\circ\pi_{\text{red}}$.
In particular, this gives rise to a canonical $*$-isomorphism 
$C^*(\mathcal{G})/\text{ker}\,p^{\mathbb{E}}_u\simeq C_{\text{red}}^*(\mathcal{G})/\text{ker}\,p^{\mathbb{E}_{\text{red}}}_u$.
\end{notations}

\begin{remark}
For each unit $u\in\mathcal{G}^{(0)}$, the quotient
$C^*(\mathcal{G})/J^{\text{full}}_u$ is canonically identified with
$C^*(\mathcal{G}u)$ -- the full group $C^*$-algebra of $\mathcal{G}u$.
This identification is constructed in two steps. First, we choose, for any
$\gamma\in \mathcal{G}u$, some open bisection $\gamma\in\mathcal{B}_\gamma\subset \mathcal{G}$ and
some $n_\gamma\in C_c(\mathcal{B}_\gamma)$, with $n_\gamma(\gamma)=1$, and let $\mathbf{w}_\gamma=
n_\gamma(\text{mod }J^{\text{full}}_u)\in C^*(\mathcal{G})/J^{\text{full}}_u$.
(If $(n',\mathcal{B}')$ is another such pair -- i.e. $\gamma\in\mathcal{B}'\subset\mathcal{G}$ is some bisection, and
$n'\in C_c(\mathcal{B}')$ is some function with $n'(\gamma)=1$, then
$n'-n_\gamma\in J^{\text{full}}_u$, thus $\mathbf{w}_\gamma$ does not depend on the choice of $\mathcal{B}_\gamma$ and $n_\gamma$.)
It is quite routine to check that $C^*(\mathcal{G})/J^{\text{full}}_u$ is unital, with unit
$\mathbf{w}_e$ (\nagyrev{with $e$ the neutral element in the group $\mathcal{G}u$}), and furthermore, the map 
\begin{equation}
\mathcal{G}u\ni\gamma\longmapsto\mathbf{w}_\gamma\in
\mathcal{U}(C^*(\mathcal{G})/J^{\text{full}}_u)
\label{Gu-to-quot}
\end{equation}
is a group homomorphism. Next, if $f\in C_c(\mathcal{G})$, then
$f-\sum_{\gamma\in \mathcal{G}u\cap\text{supp}\,f}f(\gamma)n_\gamma\in J^{\text{full}}_u$, so 
\eqref{Gu-to-quot} establishes a surjective $*$-homomorphism
$\Theta^{\text{full}}_u:C^*(\mathcal{G}u)\to C^*(\mathcal{G})/J^{\text{full}}_u$. Secondly, if we denote
by $(\mathbf{v}_\gamma)_{\gamma\in \mathcal{G}u}$ the standard unitary generators of
$C^*(\mathcal{G}u)$, then the map
\begin{equation}
C_c(\mathcal{G})\ni f\longmapsto \sum_{\gamma\in \mathcal{G}u\cap \text{supp}\,f}
f(\gamma)\mathbf{v}_\gamma\in C^*(\mathcal{G}u)
\label{rep-GGu}
\end{equation}
defines a 
$*$-representation of
$C_c(\mathcal{G})$, which yields a $*$-homomorphism 
$$\mathfrak{e}^{\text{full}}_u:C^*(\mathcal{G})\to
C^*(\mathcal{G}u).$$ 
\nagyrev{Since \eqref{rep-GGu} vanishes on all functions $h\in C_c(\mathcal{G}^{(0)})$ with $h(u)=0$, it follows that 
$\text{ker}\,\mathfrak{e}^\text{full}_u\supset J^\text{full}_u$, so
$\mathfrak{e}^{\text{full}}_u$ descends to a well defined $*$-homomorphism
$$\widehat{\mathfrak{e}^{\text{full}}_u}:C^*(\mathcal{G})/J^\text{full}_u \to
C^*(\mathcal{G}u).$$ 
The desired conclusion now follows from the pretty obvious observation 
that $\widehat{\mathfrak{e}^{\text{full}}_u}\circ\Theta^{\text{full}}_u=\text{Id}$, which implies that 
$\Theta^{\text{full}}_u$ is in fact a $*$-isomorphism, with inverse $\widehat{\mathfrak{e}^{\text{full}}_u}$; among other things, this also proves that $\text{ker}\,\mathfrak{e}^\text{full}_u= J^\text{full}_u$.}
\end{remark}

\begin{remark}
For each unit $u\in\mathcal{G}^{(0)}$, the quotients
$C^*(\mathcal{G})/\text{ker}\,p^{\mathbb{E}}_u\simeq C_{\text{red}}^*(\mathcal{G})/\text{ker}\,p^{\mathbb{E}_{\text{red}}}_u$
are canonically identified with
$C_{\text{red}}^*(\mathcal{G}u)$ -- the reduced group $C^*$-algebra of $\mathcal{G}u$.
First of all, since $\text{ev}_u\circ\mathbb{E}$ vanishes on $J^{\text{full}}_u$,  it factors through the quotient $*$-homomorphism $q_u:C^*(\mathcal{G})\to C^*(\mathcal{G})/J^{\text{full}}_u$, so we can write
$\text{ev}_u\circ\mathbb{E}=\phi_u\circ q_u$, for some state $\phi_u$ on $C^*(\mathcal{G})/J^{\text{full}}_u$.
The desired conclusion now follows from the observation that, if $\Theta^{\text{full}}_u:C^*(\mathcal{G}u)\to C^*(\mathcal{G})/J^{\text{full}}_u$ is the $*$-isomorphism defined in the preceding Remark, then $\phi_u\circ\Theta^{\text{full}}_u$ coincides with the canonical trace $\boldsymbol{\tau}_{\mathcal{G}u}$ on $C^*(\mathcal{G}u)$ (defined on the canonical generators by $\boldsymbol{\tau}_{\mathcal{G}u}(\mathbf{v}_\gamma)=\delta_{\nagyrev{e,\gamma}}$).
\end{remark}

\begin{mycomment}
To summarize the preceding two Remarks, if $u\in\mathcal{G}^{(0)}$ is some unit, we have two $*$-isomorphisms $\Theta^{\text{full}}_u$,
$\Theta^{\text{red}}_u$ and two quotient $*$-homomorphisms
$\theta_u$, $\omega_u$
$$
C^*(\mathcal{G}u)
\xrightarrow{\,\Theta^{\text{full}}_u\,}
C^*(\mathcal{G})/J^{\text{full}}_u
\xrightarrow{\,\theta_u\,}
C_{\text{red}}^*(\mathcal{G})/J^{\text{other}}_u
\xrightarrow{\,\omega_u\,}
C_{\text{red}}^*(\mathcal{G})/\text{ker}\,p^{\mathbb{E}_{\text{red}}}_u
\xleftarrow{\,\Theta^{\text{red}}_u\,}C^*_{\text{red}}(\mathcal{G}u),
$$
which yield a commutative diagram
$$
\xymatrix{C^*(\mathcal{G}) \ar[r]^{\pi_{\text{red}}} \ar[d] &C^*_{\text{red}}(\mathcal{G})\ar[d]\ar[dr]&\\
C^*(\mathcal{G})/J^{\text{full}}_u\ar[r]^{\theta_u} & C^*_{\text{red}}(\mathcal{G})/J^{\text{other}}_u
\ar[r]^{\omega_u\,\,\,\,\,}& C^*_{\text{red}}(\mathcal{G})/\text{ker}\,p^{\mathbb{E}_{\text{red}}}_u \\
C^*(\mathcal{G}u)\ar[u]^{\Theta^{\text{full}}_u}\ar[rr]&& C^*_{\text{red}}(\mathcal{G}u)
\ar[u]_{\Theta^{\text{red}}_u}
}
$$
(The bottom horizontal arrow designates the standard quotient given by the identification
$C^*_{\text{red}}(\mathcal{G}u)=C^*(\mathcal{G}u)/K_{\boldsymbol{\tau}_{\mathcal{G}u}}$.)
Following the second row, we can split the bottom rectangle into two commutative sub-diagrams
(which employ a third $*$-isomorphism $\Theta^{\text{other}}_u$)
  $$
\xymatrix{
C^*(\mathcal{G})/J^{\text{full}}_u\ar[r]^{\theta_u} & C^*_{\text{red}}(\mathcal{G})/J^{\text{other}}_u
\ar[r]^{\omega_u\,\,\,\,\,}& C^*_{\text{red}}(\mathcal{G})/\text{ker}\,p^{\mathbb{E}_{\text{red}}}_u \\
C^*(\mathcal{G}u)\ar[u]^{\Theta^{\text{full}}_u}\ar[r]^{\boldsymbol{\rho}_u}&
C^*_{\text{other},u}(\mathcal{G}u)\ar[r]^{\boldsymbol{\kappa}_u}\ar[u]^{\Theta^{\text{other}}_u}& 
C^*_{\text{red}}(\mathcal{G}u)\ar[u]_{\Theta^{\text{red}}_u}
}
$$
where the bottom arrows $\boldsymbol{\rho}_u$ and $\boldsymbol{\kappa}_u$ are surjective $*$-homomorphisms, and the intermediary $C^*$-algebra $C^*_{\text{other},u}(\mathcal{G}u)$ is understood as a quotient of
$C^*(\mathcal{G}u)/W_{\mathcal{G},u}$
the full group $C^*$-algebra
$C^*(\mathcal{G}u)$ by the ideal $W_{\mathcal{G},u}=(\Theta^{\text{full}}_u)^{-1}(\text{ker}\,\theta_u)$, which is 
contained in $K_{\boldsymbol{\tau}_{\mathcal{G}u}}$.

Equivalently, we have a commutative diagram
\begin{equation}
\xymatrix{
C^*(\mathcal{G})\ar[r]^{\pi_{\text{red}}} \ar[d]_{\mathfrak{e}^{\text{full}}_u}
&
C^*_{\text{red}}(\mathcal{G})\ar[d]_{\mathfrak{e}^{\text{other}}_u} \ar[dr]^{\mathfrak{e}^{\text{red}}_u}
&\\
C^*(\mathcal{G}u)\ar[r]_{\boldsymbol{\rho}_u}&
C^*_{\text{other},u}(\mathcal{G}u)\ar[r]_{\boldsymbol{\kappa}_u}& 
C^*_{\text{red}}(\mathcal{G}u)
}
\label{e-maps}
\end{equation}
involving three surjective $*$-homomorphisms
$\mathfrak{e}^{\text{full}}_u$,
$\mathfrak{e}^{\text{other}}_u$, and
$\mathfrak{e}^{\text{red}}_u$, having 
$\text{ker}\,\mathfrak{e}^{\text{full}}_u=J^{\text{full}}_u$, 
$\text{ker}\,\mathfrak{e}^{\text{other}}_u=J^{\text{other}}_u$,
$\text{ker}\,\mathfrak{e}^{\text{red}}_u=\text{ker}\,p^{\mathbb{E}_{\text{red}}}_u$.

The $C^*$-algebras $C^*(\mathcal{G}u)$, $C^*_{\text{other},u}(\mathcal{G}u)$ and $C^*_{\text{red}}(\mathcal{G}u)$ can all be understood as completions of the group $*$-algebra $\mathbb{C}[\mathcal{G}u]$ with respect to the $C^*$-norms $\|\,.\,\|_{\text{full}}$, $\|\,.\,\|_{\text{other},u}$ and $\|\,.\,\|_{\text{red}}$, which satisfy
$$\|\,.\,\|_{\text{full}}\geq\|\,.\,\|_{\text{other},u}\geq\|\,.\,\|_{\text{red}}.$$ 
We caution the reader that, unlike $\|\,.\,\|_{\text{full}}$ and $\|\,.\,\|_{\text{red}}$,
 the intermediary $C^*$-norm $\|\,.\,\|_{\text{other},u}$ is not intrinsically defined in terms of $\mathcal{G}u$ alone, as it depends on the particular way the group $\mathcal{G}u$ sits in $\mathcal{G}$.  

Furthermore (as seen in \cite[Prop 3]{HLS}), there are examples available in the literature in which the norms
$\|\,.\,\|_{\text{other},u}$ and $\|\,.\,\|_\text{red}$ may fail to coincide.
\end{mycomment}
 
\begin{remark}\label{G-isotropic-consequence}
As the expectation $\mathbb{E}_{\text{red}}:C^*_{\text{red}}(\mathcal{G})\to C_0(\mathcal{G}^{(0)})$ is essentially faithful (in fact honestly  faithful), using the terminology and the results from Section 
\ref{central-sec}, the inclusion $C_0(\mathcal{G}^{(0)})\subset C^*_{\text{red}}(\mathcal{G})$ exhibits the following properties
\begin{itemize}
\item[A.] The set of units \nagyrev{\em of continuous reduction relative to $\mathbb{E}_\text{red}$},  described (see Lemma \ref{usc-uniqueness}, Proposition \ref{cont-prop}, and Definition \ref{def-proper}) in five equivalent ways as
\begin{align*}
\mathcal{G}^{(0)}_{\text{r-ccont}}&=\{u\in\mathcal{G}^{(0)}\,:\,\lim_{v\to u}p^{\mathbb{E}_\text{red}}_v(f)=
p^{\mathbb{E}_\text{red}}_v(f),\,\,\,\forall\,f\in C_c(\mathcal{G})\}=\\
&=\{u\in\mathcal{G}^{(0)}\,:\,\limsup_{v\to u}p^{\mathbb{E}_\text{red}}_v(f)\leq
p^{\mathbb{E}_\text{red}}_v(f),\,\,\,\forall\,f\in C_c(\mathcal{G})\}=\\
&=\{u\in\mathcal{G}^{(0)}\,:\,J^{\text{red}}_u=\text{ker}\,p^{\mathbb{E}_{\text{red}}}_u\}=\{u\in\mathcal{G}^{(0)}\,:\,\boldsymbol{\kappa}_u\text{ is an isomorphism}\}=\\
&=\{u\in\mathcal{G}^{(0)}\,:\,\text{ the norms $\|\,.\,\|_{\text{other},u}$ and $\|\,.\,\|_{\text{red}}$
coincide on $\mathbb{C}[\mathcal{G}u]$}\}
\end{align*}
is a dense $G_\delta$ set in $\mathcal{G}^{(0)}$.
\item[B.] If $\mathcal{G}u$ is {\em amenable}, then the norms
$\|\,.\,\|_{\text{full}}$, $\|\,.\,\|_{\text{other},u}$  and $\|\,.\,\|_{\text{red}}$ all coincide on $\mathbb{C}[\mathcal{G}u]$, thus
$u\in\mathcal{G}^{(0)}_{\text{r-cont}}$.
\item[C.] If the set, defined in two equivalent ways as
\begin{align*}
\mathcal{X}_{\mathcal{G}}&=
\{u\in\mathcal{G}^{(0)}\,:\,C^*_{\text{other},u}(\mathcal{G}u) 
\text{ is a simple $C^*$-algebra}\}=\\
&=
\{u\in\mathcal{G}^{(0)}_{\text{r-cont}}\,:\,\mathcal{G}u \text{ is a {\em $C^*$-simple group\/} (i.e. $C^*_{\text{red}}(\mathcal{G}u)$ is simple)}\}
\end{align*}
is dense in $\mathcal{G}^{(0)}$, then the inclusion
$C_0(\mathcal{G}^{(0)})\subset C^*_{\text{red}}(\mathcal{G})$ is essential.
\end{itemize}
\end{remark}

In connection with Remark \ref{G-isotropic-consequence}.B, but also with an eye on Proposition
\ref{aug-ideal} 
below, we conclude this section with the following construction.

\begin{definition}
(Assume still that $\mathcal{G}$ is isotropic.) For any open bisection $\mathcal{B}\subset\mathcal{G}$, 
consider the isomorphism
$$\mathfrak{t}_{\mathcal{B}}:C_c(\mathcal{B})
\ni f\longmapsto f\circ (r|_{\mathcal{B}})^{-1}\in C_c(r(\mathcal{B})),$$
and let $\mathfrak{V}_c(\mathcal{G})$ denote the two-sided ideal 
in $C_c(\mathcal{G},\times)$
generated by
$$\bigcup_{\substack{\mathcal{B}\subset\mathcal{G}\\ \text{open bisection}}}
\{f-\mathfrak{t}_{\mathcal{B}}(f)\,:\,f\in C_c(\mathcal{B})\}\subset C_c(\mathcal{G}).$$
When we view $C_c(\mathcal{G})\subset C^*(\mathcal{G})$, we denote the norm-closure of
$\mathfrak{V}_c(\mathcal{G})$ by 
$\mathfrak{V}_{\text{full}}(\mathcal{G})$;
but when we view $C_c(\mathcal{G})\subset C^*_{\text{red}}(\mathcal{G})$, we denote the norm-closure of
$\mathfrak{V}_c(\mathcal{G})$ by 
$\mathfrak{V}_{\text{red}}(\mathcal{G})$. Each one of these ideals --
either in $C_c(\mathcal{G})$, or in $C^*(\mathcal{G})$, or in $C^*_{\text{red}}(\mathcal{G})$ -- will be referred to as the
{\em augmentation ideal}.
\end{definition}

\begin{proposition}\label{aug-ideal}
Assume $\mathcal{G}$ is isotropic.
\begin{itemize}
\item[(i)] The augmentation ideal $\mathfrak{V}_c(\mathcal{G})$  (and consequently
both $\mathfrak{V}_{\text{\rm full}}(\mathcal{G})$ and
$\mathfrak{V}_{\text{\rm red}}(\mathcal{G})$) is non-zero, if and only if $\mathcal{G}u_0\neq \{e\}$ for some
$u_0\in\mathcal{G}^{(0)}$; equivalently, $\mathcal{G}^{(0)}\subsetneq\mathcal{G}$.
\item[(ii)] One has a strict inclusion:
$\mathfrak{V}_{\text{\rm full}}(\mathcal{G})\subsetneq C^*(\mathcal{G})$.
\item[(iii)] If there exists $u_0\in\mathcal{G}^{(0)}$, for which the augmentation
homomorphism $\mathbb{C}[\mathcal{G}u_0]\to\mathbb{C}$ (defined by the trivial representation
of the group $\mathcal{G}u_0$) is continuous relative to the norm $\|\,.\,\|_{\text{\rm other},u_0}$, then
one also has a strict inclusion
$\mathfrak{V}_{\text{\rm red}}(\mathcal{G})\subsetneq C^*_{\text{\rm red}}(\mathcal{G})$.
\end{itemize}
\end{proposition}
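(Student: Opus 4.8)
The device underlying all three parts is an \emph{augmentation} $*$-homomorphism $\mathfrak{a}\colon C_c(\mathcal{G})\to C_c(\mathcal{G}^{(0)})$, defined by $\mathfrak{a}(f)(u)=\sum_{\gamma\in\mathcal{G}u}f(\gamma)$ (a finite sum, since $\operatorname{supp}f$ meets the discrete group $\mathcal{G}u$ finitely often). The plan is first to check that $\mathfrak{a}$ is a $*$-homomorphism into $\big(C_c(\mathcal{G}^{(0)}),\cdot\big)$ using the isotropy hypothesis $r=s$: if $\alpha\beta=\gamma\in\mathcal{G}u$, then $r(\alpha)=s(\alpha)=u$ and $r(\beta)=s(\beta)=u$, so both factors already lie in $\mathcal{G}u$, whence $\mathfrak{a}(f\times g)(u)=\big(\sum_{\alpha\in\mathcal{G}u}f(\alpha)\big)\big(\sum_{\beta\in\mathcal{G}u}g(\beta)\big)=\mathfrak{a}(f)(u)\,\mathfrak{a}(g)(u)$, while $*$-compatibility is immediate from $\gamma\mapsto\gamma^{-1}$ permuting $\mathcal{G}u$. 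I would then record two identities: $\mathfrak{a}$ restricts to the identity on $C_c(\mathcal{G}^{(0)})$, and for an open bisection $\mathcal{B}$ one has $\mathfrak{a}\big|_{C_c(\mathcal{B})}=\mathfrak{t}_{\mathcal{B}}$ (both send $f$ to $u\mapsto f(\gamma_u)$, with $\gamma_u$ the unique element of $\mathcal{B}\cap\mathcal{G}u$); in particular $\mathfrak{a}(f)\in C_c(\mathcal{G}^{(0)})$ by a partition of unity on $\operatorname{supp}f$. These identities give $\mathfrak{a}(f-\mathfrak{t}_{\mathcal{B}}(f))=\mathfrak{t}_{\mathcal{B}}(f)-\mathfrak{t}_{\mathcal{B}}(f)=0$, so by multiplicativity $\mathfrak{a}$ annihilates the entire two-sided ideal $\mathfrak{V}_c(\mathcal{G})$.

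For part (i), if $\mathcal{G}=\mathcal{G}^{(0)}$ then every open bisection lies inside the clopen unit space, $r|_{\mathcal{B}}=\mathrm{id}$, so $\mathfrak{t}_{\mathcal{B}}(f)=f$ and every generator vanishes, giving $\mathfrak{V}_c(\mathcal{G})=\{0\}$. Conversely, if some $\gamma_0\in\mathcal{G}u_0$ is not a unit, I would pick any open bisection $\mathcal{B}\ni\gamma_0$ and $f\in C_c(\mathcal{B})$ with $f(\gamma_0)=1$; since $\mathfrak{t}_{\mathcal{B}}(f)$ is supported on units and $\gamma_0\notin\mathcal{G}^{(0)}$, we get $(f-\mathfrak{t}_{\mathcal{B}}(f))(\gamma_0)=1\neq0$, so this generator is a nonzero element of $\mathfrak{V}_c(\mathcal{G})$ (equivalently $\mathfrak{e}^{\text{full}}_{u_0}(f-\mathfrak{t}_{\mathcal{B}}(f))=\mathbf{w}_{\gamma_0}-\mathbf{w}_e\neq0$).

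For part (ii) the point is to promote $\mathfrak{a}$ to a continuous map on the full algebra. For each $u$, the composite $\text{ev}_u\circ\mathfrak{a}\colon C_c(\mathcal{G})\to\mathbb{C}$ is a one-dimensional $*$-representation, hence bounded by $\|\,\cdot\,\|_{\text{full}}$ by the very definition \eqref{full-norm}; taking the supremum over $u$ yields $\|\mathfrak{a}(f)\|_\infty\leq\|f\|_{\text{full}}$, so $\mathfrak{a}$ extends to a $*$-homomorphism $\mathfrak{a}\colon C^*(\mathcal{G})\to C_0(\mathcal{G}^{(0)})$. This extension restricts to the identity on $C_0(\mathcal{G}^{(0)})$, hence is nonzero, and by continuity it annihilates $\mathfrak{V}_{\text{full}}(\mathcal{G})$; therefore $\mathfrak{V}_{\text{full}}(\mathcal{G})\subset\ker\mathfrak{a}\subsetneq C^*(\mathcal{G})$.

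Part (iii) is where the main obstacle lies and where the hypothesis is forced upon us: the global map $\mathfrak{a}$ need not descend to $C^*_{\text{red}}(\mathcal{G})$, precisely because the trivial representation of a non-amenable $\mathcal{G}u$ is not weakly contained in its reduced representation. The remedy is to localize at $u_0$ through the factorization \eqref{e-maps}. The continuity hypothesis says the group augmentation $\epsilon\colon\mathbb{C}[\mathcal{G}u_0]\to\mathbb{C}$, $\mathbf{v}_\gamma\mapsto1$, is $\|\,\cdot\,\|_{\text{other},u_0}$-bounded, hence extends to a character $\bar\epsilon\colon C^*_{\text{other},u_0}(\mathcal{G}u_0)\to\mathbb{C}$; I would then set $\chi=\bar\epsilon\circ\mathfrak{e}^{\text{other}}_{u_0}\colon C^*_{\text{red}}(\mathcal{G})\to\mathbb{C}$. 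Using the commutativity $\mathfrak{e}^{\text{other}}_{u_0}\circ\pi_{\text{red}}=\boldsymbol{\rho}_{u_0}\circ\mathfrak{e}^{\text{full}}_{u_0}$ from \eqref{e-maps} together with $\bar\epsilon\circ\boldsymbol{\rho}_{u_0}(\mathbf{v}_\gamma)=1$, one computes on $C_c(\mathcal{G})$ that $\chi\circ\pi_{\text{red}}(f)=\sum_{\gamma\in\mathcal{G}u_0}f(\gamma)=\text{ev}_{u_0}\circ\mathfrak{a}(f)$; thus $\chi$ is the reduced realization of the trivial representation at $u_0$. Since $\mathfrak{a}$ kills the generators, so does $\chi$, and being a continuous multiplicative functional it annihilates $\mathfrak{V}_{\text{red}}(\mathcal{G})$; as $\chi$ restricts to $h\mapsto h(u_0)$ on $C_0(\mathcal{G}^{(0)})$ it is nonzero, forcing $\mathfrak{V}_{\text{red}}(\mathcal{G})\subsetneq C^*_{\text{red}}(\mathcal{G})$. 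The one step demanding care is verifying that $\bar\epsilon$ is genuinely well defined on the completion $C^*_{\text{other},u_0}(\mathcal{G}u_0)$ and that the diagram identities of \eqref{e-maps} are invoked on the correct completions, so that $\chi$ is legitimately a functional on the reduced algebra rather than merely on $C_c(\mathcal{G})$.
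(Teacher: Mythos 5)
Your proposal is correct, and for parts (i) and (iii) it runs along essentially the same lines as the paper's proof: in (iii) you build exactly the paper's functional, namely the composition of the evaluation homomorphism $\mathfrak{e}^{\text{other}}_{u_0}$ from the diagram \eqref{e-maps} with the augmentation character of $C^*_{\text{other},u_0}(\mathcal{G}u_0)$ supplied by the hypothesis, and conclude that this character kills the generators $f-\mathfrak{t}_{\mathcal{B}}(f)$, hence the whole closed ideal $\mathfrak{V}_{\text{red}}(\mathcal{G})$, so the ideal is proper. Where you genuinely diverge is part (ii): the paper treats (ii) and (iii) uniformly by localizing at a single (arbitrary) unit $u_0$ and using the always-available augmentation character on the \emph{full} group $C^*$-algebra $C^*(\mathcal{G}u_0)$, i.e.\ the scalar-valued functional $\mathfrak{a}_{u_0}\circ\mathfrak{e}^{\text{full}}_{u_0}$, whereas you instead construct a \emph{global} augmentation $*$-homomorphism $\mathfrak{a}:C^*(\mathcal{G})\to C_0(\mathcal{G}^{(0)})$, with boundedness obtained by observing that each $\operatorname{ev}_u\circ\mathfrak{a}$ is a one-dimensional $*$-representation and invoking the definition \eqref{full-norm} of the full norm (legitimate here, since the paper's full norm is the supremum over \emph{all} $*$-representations, finite by Renault's disintegration theorem). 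Your global map buys slightly more than the paper proves -- it exhibits $\mathfrak{a}$ as a $C_0(\mathcal{G}^{(0)})$-valued retraction whose kernel contains $\mathfrak{V}_{\text{full}}(\mathcal{G})$, rather than just a single proper ideal containing it -- and your explicit treatment of (i), which the paper dismisses as obvious, is also correct; your own closing caveat about checking that $\bar\epsilon$ and the identities of \eqref{e-maps} live on the right completions is precisely the bookkeeping the paper's unified notation $\mathfrak{a}_{u_0}\circ\mathfrak{e}_{u_0}$ handles, and it goes through as you describe.
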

(A sufficient condition for the hypothesis in statement (iii) to hold true is that $\mathcal{G}u_0$ is {\em amenable}, for some $u_0\in\mathcal{G}^{(0)}$.)

\begin{proof}
Statement (i) is pretty obvious. As for (ii) and (iii), a unified proof can be provided as follows.
Denote by $\mathfrak{V}$ either one of $\mathfrak{V}_{\text{full}}(\mathcal{G})$ or
$\mathfrak{V}_{\text{red}}(\mathcal{G})$, denote by $
\mathfrak{A}$ either one of $C^*(\mathcal{G})$ -- for statement (ii), or
$C^*_{\text{red}}(\mathcal{G})$ -- for statement (iii). Fix also some unit $u_0\in\mathcal{G}^{(0)}$, which either satisfies the hypothesis in (iii), or is arbitrary otherwise, denote by $\mathfrak{A}_{u_0}$ one of 
$C^*(\mathcal{G}u_0)$ 
-- for (ii), or $C^*_{\text{other},u_0}(\mathcal{G}u_0)$ -- for (iii), and lastly let $\mathfrak{e}_{u_0}:
\mathfrak{A}\to \mathfrak{A}_{u_0}$ denote one of the maps from \eqref{e-maps}. The trivial group representation
$\mathcal{G}u_0\ni\gamma\longmapsto 1\in\mathbb{T}$ induces an augmentation $*$-homomorphism
$\mathfrak{a}_{u_0}:\mathfrak{A}_{u_0}\to\mathbb{C}$. (In statement (ii) this is obvious; in statement (iii) this follows from the stated hypothesis.) Now the desired conclusion simply follows from the observation that,
for each open bisection $\mathcal{B}$, we have the inclusion
$$\{f-\mathfrak{t}_{\mathcal{B}}(f)\,:\,f\in C_c(\mathcal{B})\}\subset
\text{ker}(\mathfrak{a}_{u_0}\circ\mathfrak{e}_{u_0}),$$
which in turn implies $\mathfrak{V}\subset \text{ker}(\mathfrak{a}_{u_0}\circ\mathfrak{e}_{u_0})$.
\end{proof}

\section{Minimality and Simplicity}\label{min-sec}

In the context of (traditional) dynamical systems, an action $G\curvearrowright X$ of a group $G$ on a locally compact
space $X$ is said to be {\em minimal}, if there do not exists any non-trivial (i.e. non-empty strict subset) open $G$-invariant subsets of $X$. This notion is extended to arbitrary \'etale groupoids $\mathcal{G}$, by making a similar requirement on the unit space $\mathcal{G}^{(0)}$ (see section \ref{grp-sec} for more on this). Minimality is also extended to actions $G\curvearrowright B$  of a group $G$ by automorphisms of a $C^*$-algebra $B$, by requiring the non-existence of non-trivial $G$-invariant ideals $\{0\}\neq J \lhd B$. In what follows (see Definition \ref{def-minimal}.B) we propose yet another generalization of minimality, for non-degenerate of $C^*$-inclusions.

\begin{notation}
Suppose $B\subset A$ is a $C^*$-inclusion. An element $n\in A$ is called a {\em $B$-normalizer}, if
$nBn^*\cup n^*Bn\subset B$. The collection of all these normalizers is denoted by $\mathcal{N}_A(B)$.
\end{notation}

\begin{remark}\label{monoid}
It is fairly obvious that $\mathcal{N}_A(B)$ is norm-closed; furthermore it is also a \nagyrev{\em $*$-semigroup}, in the sense that
\begin{itemize}
\item[$(a)$] $n\in\mathcal{N}_A(B)\Rightarrow n^*\in\mathcal{N}_A(B)$;
\item[$(b)$] $n_1,n_2\in\mathcal{N}_A(B)\Rightarrow n_1n_2\in\mathcal{N}_A(B)$.
\end{itemize}
\end{remark}

\begin{remark} \label{norm-support}
In the case of a non-degenerate inclusion $B\subset A$, normalizers are supported in $B$, in the sense that:
 $nn^*,n^*n\in B$, $\forall\,n\in\mathcal{N}(B)$.
\end{remark}

\begin{definition}
Suppose  $B\subset A$ is a  non-degenerate $C^*$-inclusion.
Denote the $C^*$-subalgebra of $A$ generated by $\mathcal{N}_A(B)$ by 
$A^{\text{reg}}$. The resulting non-degenerate inclusion $B\subset A^{\text{reg}}$ is called
the {\em regular part\/} of $B\subset A$.
\end{definition}

\begin{mycomment}
The above terminology is consistent with the standard one, by which a non-degenerate inclusion $B\subset A$
is termed {\em regular}, if $A^{\text{reg}}=A$. Using this language, and the obvious equality
\begin{equation}
\mathcal{N}_{A^{\text{reg}}}(B)=\mathcal{N}_A(B).
\label{norm-reg}
\end{equation}
it follows that $A^{\text{reg}}$ is the largest $C^*$-subalgebra of $A$ that makes the inclusion 
$B\subset A^{\text{reg}}$ regular.

By Remark \ref{monoid} it is also fairly obvious that
\begin{equation}
A^{\text{reg}}=\overline{\text{span}}\mathcal{N}_A(B).
\label{Areg=span}
\end{equation}
\end{mycomment} 

In order to achieve more flexibility, it is helpful to consider normalizers sitting in multiplier algebras, the properties of which are collected in
Proposition \ref{norm-mult} below.

 
\begin{proposition}\label{norm-mult}
Assume $B\subset A$ is a non-degenerate $C^*$-inclusion, and let $M(B)\subset M(A)$ be the associated multiplier algebra inclusion (as described in Remark \ref{MBsubMA-rem}). 
\begin{itemize}
\item[(i)] $\mathcal{N}_{M(A)}(B)$ is a norm-closed \nagyrev{sub-$*$-monoid (i.e. a unital $*$-semigroup)} of $M(A)$; furthermore, if $n\in\mathcal{N}_{M(A)}(B)$ is invertible in $M(A)$, then $n^{-1}\in\mathcal{N}_{M(A)}(B)$.
\item[(ii)] $\mathcal{N}_A(B)=\mathcal{N}_{M(A)}(B)\cap A=B\mathcal{N}_{M(A)}(B)B=B\mathcal{N}_{A}(B)B$.
\item[(iii)] $\mathcal{N}_{M(A)}(M(B))=\mathcal{N}_{M(A)}(B)$; in particular,
$nn^*,n^*n\in M(B)$, $\forall\,n\in\mathcal{N}_{M(A)}(B)$.
\end{itemize}
\end{proposition}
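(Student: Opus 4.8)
The plan is to prove the three parts in a slightly permuted order, since the invertibility assertion in (i) turns out to rest on (iii). I would first record the purely algebraic (``monoid'') content of (i), then establish (iii) by a strict-approximation argument, deduce the invertibility claim of (i) from (iii), and finally settle the set identities in (ii).

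For the elementary part of (i), the verifications that $\mathcal{N}_{M(A)}(B)$ is norm-closed, self-adjoint, and closed under multiplication are word-for-word those of Remark \ref{monoid}, carried out in the Banach $*$-algebra $M(A)$ rather than in $A$: norm-continuity of multiplication together with norm-closedness of $B$ gives closure; the identities $(n^*)B(n^*)^*=n^*Bn\subset B$ and $(n_1n_2)B(n_1n_2)^*=n_1(n_2Bn_2^*)n_1^*\subset n_1Bn_1^*\subset B$ give the $*$-semigroup structure; and $1\in M(A)$ trivially normalizes $B$, so the set is a unital $*$-semigroup. I would defer the invertibility statement.

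For (iii), I would first note that $M(B)\subset\mathcal{N}_{M(A)}(B)$, since for $m\in M(B)$ one has $mBm^*\subset mB\subset B$ and $m^*Bm\subset B$ by \eqref{MBsubMA2}. The inclusion $\mathcal{N}_{M(A)}(M(B))\subset\mathcal{N}_{M(A)}(B)$ is easy: if $nM(B)n^*\subset M(B)$ then $nBn^*\subset M(B)$, while $nBn^*\subset A$ because $A$ is an ideal of $M(A)$, so $nBn^*\subset M(B)\cap A=B$ by \eqref{MBsubMA3} (and symmetrically for $n^*Bn$). The substantive inclusion is $\mathcal{N}_{M(A)}(B)\subset\mathcal{N}_{M(A)}(M(B))$. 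Given $n\in\mathcal{N}_{M(A)}(B)$ and $m\in M(B)$, fix an approximate unit $(u_\lambda)\subset B$, which by non-degeneracy (Definition \ref{def-non-deg}) converges strictly to $1$ in $M(A)$, and set $b_\lambda=mu_\lambda\in B$. Since left multiplication by $m$ is strictly continuous on $M(A)$, $b_\lambda\to m$ strictly; applying the (strictly continuous) left multiplication by $n$ and right multiplication by $n^*$ then yields $nb_\lambda n^*\to nmn^*$ strictly, while $nb_\lambda n^*\in nBn^*\subset B$. Hence for each $b\in B$ we get $(nb_\lambda n^*)b\to(nmn^*)b$ in norm with $(nb_\lambda n^*)b\in B$, so $(nmn^*)b\in B$, giving $nmn^*\in M(B)$ by \eqref{MBsubMA2}; the symmetric computation gives $n^*M(B)n\subset M(B)$. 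Taking $m=1$ produces the ``in particular'' clause $nn^*,n^*n\in M(B)$. With this in hand, the invertibility part of (i) is immediate: if $n\in\mathcal{N}_{M(A)}(B)$ is invertible in $M(A)$, then $nn^*\in M(B)$ is positive and invertible in $M(A)$, so by spectral permanence for the \emph{unital} inclusion $M(B)\subset M(A)$ (Remark \ref{MBsubMA-rem}) we have $(nn^*)^{-1}\in M(B)$; thus $n^{-1}=n^*(nn^*)^{-1}$ is a product of $n^*\in\mathcal{N}_{M(A)}(B)$ and $(nn^*)^{-1}\in M(B)\subset\mathcal{N}_{M(A)}(B)$, hence lies in $\mathcal{N}_{M(A)}(B)$.

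Finally, for (ii) the identity $\mathcal{N}_A(B)=\mathcal{N}_{M(A)}(B)\cap A$ is immediate from the definitions. The inclusion $B\mathcal{N}_{M(A)}(B)B\subset\mathcal{N}_A(B)$ follows from $(b_1nb_2)B(b_1nb_2)^*=b_1n(b_2Bb_2^*)n^*b_1^*\subset b_1(nBn^*)b_1^*\subset B$ (and symmetrically), together with $b_1nb_2\in A$. The reverse inclusion $\mathcal{N}_A(B)\subset B\mathcal{N}_A(B)B$ is where I expect the real work: I would first check $Bn,nB\subset\mathcal{N}_A(B)$ for $n\in\mathcal{N}_A(B)$ (e.g. $(bn)B(bn)^*=b(nBn^*)b^*\subset B$), so by norm-closedness $\overline{BnB}\subset\mathcal{N}_A(B)$, and then invoke the Cohen--Hewitt factorization theorem twice, using $A=\overline{BA}=\overline{AB}$ (Remark \ref{CH}), to write $n=b_1zb_2$ with $b_1,b_2\in B$ and $z$ in the closed submodule $\overline{BnB}\subset\mathcal{N}_A(B)$. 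Chaining $\mathcal{N}_A(B)\subset B\mathcal{N}_A(B)B\subset B\mathcal{N}_{M(A)}(B)B\subset\mathcal{N}_A(B)$ then forces all four sets to coincide. The main obstacles are the two delicate points already flagged: the strict-topology approximation $mu_\lambda\to m$ driving (iii) — on which the invertibility clause of (i) logically depends — and the submodule-respecting form of Cohen--Hewitt needed to keep the middle factor a normalizer in (ii).
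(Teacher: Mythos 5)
Your proof is correct, and its skeleton matches the paper's more closely than you might expect: the paper also proves (iii) first by an approximate-unit limit argument (your strict-topology phrasing of $nb_\lambda n^*\to nmn^*$ is the same computation the paper performs as a norm limit $nxn^*b=\lim_\lambda n(xu_\lambda)n^*b$ against $b\in B$, concluding via \eqref{MBsubMA2}), it also derives the invertibility clause of (i) from (iii) together with inverse-closedness of the unital inclusion $M(B)\subset M(A)$, and it also reduces (ii) to the chain $\mathcal{N}_A(B)\supset B\mathcal{N}_{M(A)}(B)B\supset B\mathcal{N}_A(B)B$ plus a one-sided factorization $\mathcal{N}_A(B)\subset B\mathcal{N}_A(B)$ (adjoints then give the two-sided version). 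You genuinely diverge in two places. For invertibility, the paper uses the polar decomposition $n=u(n^*n)^{1/2}$, showing $u=n(n^*n)^{-1/2}$ is a unitary normalizer; your direct formula $n^{-1}=n^*(nn^*)^{-1}$ is shorter and rests on exactly the same spectral-permanence fact, so this is a harmless simplification (likewise your ``take $m=1$'' for the \emph{in particular} clause, where the paper instead cites Remark \ref{norm-support} for the unital inclusion $M(B)\subset M(A)$). More substantively, for the factorization the paper gives an explicit construction: $y_\varepsilon=\left((nn^*)^{1/4}+\varepsilon 1\right)^{-1}n\in\mathcal{N}_A(B)$, the functional-calculus estimate $\|y_\varepsilon-y_\delta\|\leq|\varepsilon-\delta|$ showing $(y_\varepsilon)$ is Cauchy, and then $n=(nn^*)^{1/4}y\in B\mathcal{N}_A(B)$. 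Your route through Cohen--Hewitt is a legitimate replacement, but, as you yourself flagged, it really needs the \emph{refined} form of the theorem: the bare statement $A=\overline{BA}$ would only yield $n=b_1y$ with $y\in A$, which is useless; one must quote the version in which the factor lies in the closed submodule generated by the element, $y\in\overline{Bn}$ (this refinement is standard, e.g.\ Hewitt--Ross (32.22)). One small point you should make explicit: the submodule fed to Cohen--Hewitt is a closed \emph{linear span}, so you need $\overline{\text{span}}(BnB)\subset\mathcal{N}_A(B)$, not merely $BnB\subset\mathcal{N}_A(B)$. This does hold, because every summand is built from the same $n$: for $x=\sum_i b_inc_i$ one has $xbx^*=\sum_{i,j}b_in(c_ibc_j^*)n^*b_j^*$, with each term in $b_i(nBn^*)b_j^*\subset B$. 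In effect, the paper's $y_\varepsilon$-argument is an in-line, self-contained proof of precisely the Cohen--Hewitt instance you invoke: your version buys brevity by quoting a stronger (but standard) theorem; the paper's buys self-containedness with a two-line $C^*$-estimate.
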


\begin{proof}
Fix an approximate unit $(u_\lambda)_\lambda\subset B$ for $A$, and let us first prove (iii).
The inclusion $\mathcal{N}_{M(A)}(M(B))\subset\mathcal{N}_{M(A)}(B)$ is fairly obvious, 
by Remark \ref{MBsubMA-rem}, because whenever $n\in\mathcal{N}_{M(A)}(M(B))$ and $b\in B$, the elements
$nbn^*$ and $n^*bn$ clearly belong to both $M(B)$ and $A\unlhd M(A)$, so
$nbn^*,n^*bn\in M(B)\cap A=B.$
For the reverse inclusion $\mathcal{N}_{M(A)}(B)\subset\mathcal{N}_{M(A)}(M(B))$, fix for the moment some arbitrary elements $n\in\mathcal{N}_{M(A)}(B)$, $x\in M(B)$, and $b\in B$, and notice that,
since we have $nxn^*b=\lim_\lambda nxu_\lambda (n^*b)$ (with the element in parenthesis representing an element in $A$),
as well as $xu_\lambda\in M(B)B \subset B$, it follows that 
$n(xu_\lambda) n^*b\in nBn^*b\subset Bb\subset B$, so taking the limit, it follows that $nxn^*b\in B$.
Since $b\in B$ was arbitrary, we can conclude that the multiplier $m=nxn^*\in M(A)$ satisfies $mB\subset B$, so by 
Remark \ref{MBsubMA-rem}, it follows that $nxn^*\in M(B)$, and likewise, $n^*xn\in M(B)$. 
Letting $x\in M(B)$ vary, this simply shows that $nM(B)n^*\cup n^*M(B)n\subset M(B)$, so $n\in
\mathcal{N}_{M(A)}(M(B))$. 

The second statement from (iii), as well as the first statement from (i) are now obvious, by Remark \ref{norm-support} applied to the unital (thus non-degenerate) inclusion $M(B)\subset M(A)$. As for the second statement from (i), assume
$n\in\mathcal{N}_{M(A)}(B)$ is invertible, and let us consider its polar decomposition
$n=u(n^*n)^{1/2}$, where $u$ is a unitary in $M(A)$, and $(n^*n)^{1/2}$ is a positive invertible element in $M(B)$. Since
$M(B)\subset \mathcal{N}_{M(A)}(M(B))=\mathcal{N}_{M(A)}(B)$, it follows that $(n^*n)^{-1/2}\in \mathcal{N}_{M(A)}(B)$, so by the \nagyrev{monoid} property, the unitary $u=n(n^*n)^{-1/2}$ also belongs to $\mathcal{N}_{M(A)}(B)$, and then
again by the $*$-semigroup property $n^{-1}=(n^*n)^{-1/2}u^*$ also belongs to
$\mathcal{N}_{M(A)}(B)$.

(ii) First of all, the equality $\mathcal{N}_A(B)=\mathcal{N}_{M(A)}(B)\cap A$ is trivial from the definition.
Secondly, the inclusion $B\mathcal{N}_{M(A)}(B)B\subset \mathcal{N}_A(B)$ is fairly obvious, 
from the inclusion by the $B\mathcal{N}_{M(A)}(B)B\subset\mathcal{N}_{M(A)}(B)$ (which follows from the semigroup
property, as $B\subset M(B)\subset\mathcal{N}_{M(A)}(B)$), which combined with the observation that
$BM(A)B\subset AM(A)A\subset A$, implies that $B\mathcal{N}_{M(A)}(B)B\subset \mathcal{N}_{M(A)}(B)\cap A=\mathcal{N}_A(B)$.

So far, we have $\mathcal{N}_A(B)=\mathcal{N}_{M(A)}(M(B))\cap A\supset B\mathcal{N}_{M(A)}(M(B))B\supset B\mathcal{N}_{A}(B)B$, so all that remains to be justified is the factorization inclusion
$\mathcal{N}_A(B)\subset B\mathcal{N}_{A}(B)B$, for which (by applying adjoints) it suffices to prove the inclusion
$\mathcal{N}_A(B)\subset B\mathcal{N}_{A}(B)$. Fix $n\in \mathcal{N}_A(B)$, and consider the elements
$x_\varepsilon=\left((nn^*)^{1/4}+\varepsilon 1\right)^{-1}\in M(B)\subset\mathcal{N}_{M(A)}(B)$, $\varepsilon>0$, as well as the products
$y_\varepsilon =x_\varepsilon n\in A\cap\mathcal{N}_{M(A)}(B)=\mathcal{N}_A(B)$.
A simple calculation shows that
$$\|y_\varepsilon-y_\delta\|^2=\|n^*\left(\left((nn^*)^{1/4}+\varepsilon 1\right)^{-1}-
\left((nn^*)^{1/4}+\delta 1\right)^{-1}\right)^2n\|=
\|f_{\varepsilon.\delta}(n^*n)\|,$$
where $f_{\varepsilon,\delta}(t)=t\left(\dfrac{1}{\root{4}\of{t}+\varepsilon}-
\dfrac{1}{\root{4}\of{t}+\delta}\right)^2=(\varepsilon-\delta)^2\dfrac{t}{(\root{4}\of{t}+\varepsilon)^2
(\root{4}\of{t}+\delta)^2}$. Since $|f_{\varepsilon,\delta}(t)|\leq \nagyrev{(\varepsilon-\delta)^2}$, $\forall\,t\geq 0$, it follows that
$\|y_\varepsilon-y_\delta\|\leq|\varepsilon-\delta|$, so the limit  $y=\lim_{\varepsilon\to 0}y_\varepsilon$ exists in
$A$, so it defines an element $y\in\mathcal{N}_A(B)$. By construction, we have
$n=\left((nn^*)^{1/4}+\varepsilon 1\right)y_\varepsilon$, so taking limit we get
$n=(nn^*)^{1/4}y\in B\mathcal{N}_A(B)$.
\end{proof}

We also have the following multiplier version of \eqref{norm-reg}
 
\begin{proposition}\label{norm-mult-reg}
Suppose $B\subset A$ is a non-degenerate inclusion, let $B\subset A^{\text{\rm reg}}$ be its regular part. When one considers the associated multiplier inclusions $M(B)\subset M(A^{\text{\rm reg}})\subset M(A)$ 
(as described in Remark \ref{MBsubMA-rem}), the following equality holds:
\begin{equation}
\mathcal{N}_{M(A^{\text{\rm reg}})}(B)=
\mathcal{N}_{M(A)}(B).\label{norm-mult-reg-eq}
\end{equation}
\end{proposition}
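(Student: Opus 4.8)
The plan is to prove the two inclusions in \eqref{norm-mult-reg-eq} separately, the forward one being immediate and the reverse one carrying all the content. For $\mathcal{N}_{M(A^{\text{reg}})}(B)\subset\mathcal{N}_{M(A)}(B)$, any $n\in\mathcal{N}_{M(A^{\text{reg}})}(B)$ already lives in $M(A^{\text{reg}})\subset M(A)$ and satisfies $nBn^*\cup n^*Bn\subset B$ by definition, so it qualifies as an element of $\mathcal{N}_{M(A)}(B)$; I would dispatch this by simply unwinding the definitions.

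The substance is the reverse inclusion. Fix $n\in\mathcal{N}_{M(A)}(B)$. Since the normalizing condition $nBn^*\cup n^*Bn\subset B$ is then automatically in force, the only thing left to verify is the membership $n\in M(A^{\text{reg}})$. To get at this I would invoke the identification \eqref{MBsubMA2}, but applied to the inclusion $A^{\text{reg}}\subset A$ (rather than to $B\subset A$), with $A^{\text{reg}}$ playing the role of the distinguished subalgebra. This inclusion is non-degenerate, since $B\subset A^{\text{reg}}$ and $B\subset A$ are non-degenerate and hence a common approximate unit drawn from $B\subset A^{\text{reg}}$ serves for $A$ as well; therefore \eqref{MBsubMA2} yields $M(A^{\text{reg}})=\{m\in M(A):mA^{\text{reg}}\subset A^{\text{reg}}\}$. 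Thus the goal reduces to establishing the single containment $nA^{\text{reg}}\subset A^{\text{reg}}$.

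To obtain this I would use the description $A^{\text{reg}}=\overline{\text{span}}\,\mathcal{N}_A(B)$ from \eqref{Areg=span}. Because left multiplication by $n$ is a bounded operator on $A$, it suffices to show $n\mathcal{N}_A(B)\subset A^{\text{reg}}$, and in fact I claim the stronger $n\mathcal{N}_A(B)\subset\mathcal{N}_A(B)$. Indeed, for $m\in\mathcal{N}_A(B)\subset\mathcal{N}_{M(A)}(B)$ the product $nm$ lies in $\mathcal{N}_{M(A)}(B)$ by the $*$-semigroup (monoid) property recorded in Proposition \ref{norm-mult}(i), while $nm\in M(A)\cdot A\subset A$; combining these with the equality $\mathcal{N}_A(B)=\mathcal{N}_{M(A)}(B)\cap A$ from Proposition \ref{norm-mult}(ii) gives $nm\in\mathcal{N}_A(B)$. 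Passing to closed linear spans then delivers $nA^{\text{reg}}\subset A^{\text{reg}}$, and with it $n\in M(A^{\text{reg}})$, completing the argument.

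I do not expect a serious obstacle here: once the problem is reframed as the membership $n\in M(A^{\text{reg}})$ and translated through \eqref{MBsubMA2} into the stability $nA^{\text{reg}}\subset A^{\text{reg}}$, the semigroup structure of the normalizers does all the work. The only point requiring a moment's care is the decision to apply the multiplier identities of Remark \ref{MBsubMA-rem} to the non-degenerate inclusion $A^{\text{reg}}\subset A$, which is what legitimizes the one-sided characterization of $M(A^{\text{reg}})$ used above.
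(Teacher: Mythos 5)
Your proof is correct and takes essentially the same route as the paper's: both reduce the reverse inclusion to the membership $n\in M(A^{\text{\rm reg}})$, translate this via the identification \eqref{MBsubMA2} (applied to the non-degenerate inclusion $A^{\text{\rm reg}}\subset A$) into the stability $n A^{\text{\rm reg}}\subset A^{\text{\rm reg}}$, and then use \eqref{Areg=span} together with the $*$-semigroup property and Proposition \ref{norm-mult}(i),(ii) to verify $n\,\mathcal{N}_A(B)\subset\mathcal{N}_A(B)$. Your version merely spells out the non-degeneracy check and the one-sided multiplier characterization that the paper leaves implicit.
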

\begin{proof}
The inclusion ``$\subset$'' being trivial, we only need to check the reverse inclusion
``$\supset$,'' which amounts to proving the inclusion
$\mathcal{N}_{M(A)}(B)\subset M(A^{\text{reg}})$, or equivalently, to showing that
$\mathcal{N}_{M(A)}(B)\cdot A^{\text{reg}}\subset A^{\text{reg}}$. By \nagyrev{\eqref{Areg=span}},
it suffices to prove the inclusion
$\mathcal{N}_{M(A)}(B)\cdot\mathcal{N}_A(B)\subset \mathcal{N}_A(B)$, which is immediate from
Proposition \ref{norm-mult} and Remark \ref{monoid}.
\end{proof}

\begin{definitions}\label{def-minimal}
Suppose $B\subset A$ is a non-degenerate $C^*$-inclusion.
\begin{itemize}
\item[A.] An ideal $J\unlhd B$ is said to be {\em fully normalized in $A$, relative to $B$}, 
if
$\mathcal{N}_{M(A)}(B)\subset \mathcal{N}_{M(A)}(J)$.
\item[B.] 
We declare the inclusion $B\subset A$ {\em minimal}, if the only ideals $J\unlhd B$, that are fully normalized
in $A$ relative to $B$, 
are the trivial ones: $J=\{0\}$ and $J=B$.
\end{itemize}
\end{definitions}

\begin{nagyrevenv}
\begin{mycomment}
As we shall see in Section \ref{grp-sec}, when applied to groupoid $C^*$-algebras, the minimality of inclusions of the 
form $C_0(\mathcal{G}^{(0)}\subset C^*_\text{red}(\mathcal{G})$ corresponds precisely to the minimality of the gropupoid $\mathcal{G}$.
\end{mycomment}
\end{nagyrevenv} 

\begin{remark}
For an ideal $J\unlhd B$, the condition $\mathcal{N}_{M(A)}(B)\subset \mathcal{N}_{M(A)}(J)$, that appears in the above Definition,  is equivalent to the inclusion $\mathcal{N}_A(B)\subset\mathcal{N}_A(J)$. Indeed, if
$\mathcal{N}_{M(A)}(B)\subset \mathcal{N}_{M(A)}(J)$, then $\mathcal{N}_A(B)=A\cap\mathcal{N}_{M(A)}(B)
\subset A\cap\mathcal{N}_{M(A)}(J)=\mathcal{N}_A(J)$. Conversely, if
$\mathcal{N}_{A}(B)\subset \mathcal{N}_{A}(J)$, and $m\in\mathcal{N}_{M(A)}(B)$, then for every $x\in J$, we can write $mxm^*=\lim_{\lambda,\mu}(mu_\lambda)x(u_\mu m^*)$, for some approximate unit
$(u_\lambda)_\lambda$ for $B$, with both monomials in parentheses being elements in
$B\mathcal{N}_{M(A)}(B)\subset\mathcal{N}_A(B)\subset \mathcal{N}_A(J)$, thus proving that
$mxm^*\in J$ (as well as $m^*xm\in J$, by same argument applied to $m^*$); this argument shows that
$\mathcal{N}_{M(A)}(B)\subset\mathcal{N}_{M(A)}(J)$.
\end{remark}

\begin{remark}\label{A-inv-example}
For any ideal $L\unlhd A$, the intersection $B\cap L\unlhd B$ is fully normalized in $A$, relative to $B$.
Indeed, for any $n\in\mathcal{N}_A(B)$ and any $x\in B\cap L$, the products $nxn^*$ and $n^*xn$ both belong to $B$ (since $n$ normalizes $B$) as well as to $L$ (which is a two-sided ideal), thus proving the
inclusion $\mathcal{N}_A(B)\subset \mathcal{N}_A(B\cap L)$.
\end{remark}

When verifying (non-)minimality, the following result is particularly useful. \nagyrev{(See for instance Remark \ref{rem-nfn*}.)}

\begin{lemma}\label{normalized-ideal}
Assume $B\subset A$ is a non-degenerate $C^*$-inclusion. If an ideal $J\lhd B$ satisfies
the condition
\begin{itemize}
\item[{\sc (sn)}] $\mathcal{N}_A(B)\cap \mathcal{N}_A(J)$ generates $A^{\text{\rm reg}}$ as a $C^*$-subalgebra of $A$,
\end{itemize}
then $L=\overline{\text{\rm span}}\{a_1xa_2\,:\,x\in J,\,a_1,a_2\in A^{\text{\rm reg}}\}$ 
is a proper ideal $L\lhd A^{\text{\rm reg}}$.

In particular (by the preceding Remark), any ideal $J\lhd B$ satisfying condition {\sc (sn)} is contained in some ideal $J'\lhd B$, which is fully normalized in $A$ relative to $B$.
\end{lemma}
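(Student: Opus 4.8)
The plan is to show that $L=\overline{A^{\text{reg}}JA^{\text{reg}}}$, the closed two-sided ideal of $A^{\text{reg}}$ generated by $J$, is a \emph{proper} ideal, by producing a state on $A^{\text{reg}}$ that annihilates $L$; the ``In particular'' clause then follows from Remark \ref{A-inv-example}.

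I would first record the algebraic backbone. Put $S=\mathcal{N}_A(B)\cap\mathcal{N}_A(J)$. Since $\mathcal{N}_A(B)$ is a $*$-semigroup (Remark \ref{monoid}) and $\mathcal{N}_A(J)$ is one as well (if $n_1,n_2$ normalize $J$ then so do $n_1n_2$ and $n_1^*$, by the same one-line computations), their intersection $S$ is a $*$-semigroup. Hence $\text{span}\,S$ is a $*$-subalgebra of $A$, dense in the $C^*$-algebra it generates, and condition {\sc (sn)} therefore gives $\overline{\text{span}}\,S=A^{\text{reg}}$. Because $J\lhd B$ is proper, $B/J\neq\{0\}$, so I may pick a state $\varphi\in S(B)$ with $\varphi|_J=0$ and extend it, via Hahn--Banach, to a state $\tilde\varphi\in S(A^{\text{reg}})$.

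The heart of the argument concerns the left kernel $N=L_{\tilde\varphi}=\{c\in A^{\text{reg}}:\tilde\varphi(c^*c)=0\}$, which is a closed left ideal on which $\tilde\varphi$ vanishes. The key step is the implication
$$x\in J,\quad m\in S\ \Longrightarrow\ xm\in N,$$
valid because $(xm)^*(xm)=m^*(x^*x)m\in m^*Jm\subseteq J$, so $\tilde\varphi\big((xm)^*(xm)\big)=\varphi(m^*x^*xm)=0$. This is the single place where the $J$-invariance of the normalizers is indispensable: one has $J\subseteq N$ trivially, but $N$ is only a \emph{left} ideal, so there is no control over right multiplication $x\mapsto xa$ by a generic $a\in A^{\text{reg}}$. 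Using $\overline{\text{span}}\,S=A^{\text{reg}}$, linearity and norm-continuity then upgrade the displayed implication to $xa\in N$ for all $x\in J$ and $a\in A^{\text{reg}}$. Consequently $a_1xa_2=a_1(xa_2)\in A^{\text{reg}}N\subseteq N$, so $\tilde\varphi$ vanishes on $A^{\text{reg}}JA^{\text{reg}}$ and hence on $L$. Since $L$ is a two-sided ideal, this forces $L\subseteq K_{\tilde\varphi}$; and $K_{\tilde\varphi}$ is proper, being the kernel of the nonzero GNS representation $\Gamma_{\tilde\varphi}$. Thus $L$ is a proper ideal of $A^{\text{reg}}$.

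For the final assertion I would set $J'=B\cap L$. Applying Remark \ref{A-inv-example} to the non-degenerate inclusion $B\subset A^{\text{reg}}$ and the ideal $L\unlhd A^{\text{reg}}$ (and invoking $\mathcal{N}_{A^{\text{reg}}}(B)=\mathcal{N}_A(B)$ from \eqref{norm-reg}) shows that $J'$ is fully normalized in $A$ relative to $B$, while $J\subseteq L\cap B=J'$ is immediate. Finally $J'\neq B$: were $B\subseteq L$, then since $B$ contains an approximate unit for $A^{\text{reg}}$ and $L$ is an ideal one would get $L=A^{\text{reg}}$, contradicting the previous paragraph. I expect the main obstacle to be exactly the right-multiplication issue flagged above---controlling $xa$ for $a$ outside $B$---which is precisely what condition {\sc (sn)} is designed to resolve.
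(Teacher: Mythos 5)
Your proof is correct and follows essentially the same route as the paper's: extend a state $\varphi\in S(B)$ vanishing on $J$ to a state on $A^{\text{reg}}$, use the $*$-semigroup structure of $\mathcal{N}_A(B)\cap\mathcal{N}_A(J)$ together with {\sc (sn)} to reduce to generators, and exploit $m^*Jm\subseteq J$ to show the GNS kernel (a proper ideal) absorbs $L$; your packaging via the left kernel $L_{\tilde\varphi}$ is just a mild rephrasing of the paper's verification that $\psi(a^*x^*xa)=0$. Your explicit treatment of the ``in particular'' clause (taking $J'=B\cap L$, invoking Remark \ref{A-inv-example} with \eqref{norm-reg}, and checking $J'\neq B$ via an approximate unit) matches the paper's parenthetical remark on non-degeneracy.
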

An ideal satisfying condition {\sc (sn)} is called {\em sufficiently normalized in  $A$, relative to $B$}.

\begin{proof} 
By construction, $L$ is clearly a closed two-sided ideal in $A^{\text{reg}}$, which contains $J$, so it suffices to prove that $L\subsetneq A^{\text{reg}}$. (By non-degeneracy, this will also ensure the strict inclusion $B\cap L\subsetneq B$.)

Fix a state $\varphi\in S(B)$ which vanishes on $J$, let $\psi\in S(A^{\text{reg}})$ be an extension of
$\varphi$ to a state on $A^{\text{reg}}$, and let $K_\psi\lhd A^{\text{reg}}$ be the kernel of the GNS representation
$\Gamma_\psi:A^{\text{reg}}\to\mathscr{B}(L^2(A^{\text{reg}},\psi))$. Since $K_\psi$ is a proper ideal, the desired conclusion will follow if we prove the inclusion $J\subset K_\psi$ (because this will also force $L\subset K_\psi$). In other words, all we need is to prove that
$\Gamma_\psi$ vanishes on $J$, which amounts to showing that
\begin{equation}
\|xa\|_{2,\psi}=0,\,\,\,\forall\,x\in J,\,a\in A^{\text{reg}}.
\label{GNSx=0}
\end{equation}
By Remark \ref{monoid}, combined with condition {\sc (sn)}, we know that
$A^{\text{reg}}=\overline{\text{span}}\left[\mathcal{N}_A(B)\cap\mathcal{N}_A(J)\right]$, so it suffices to verify  
\eqref{GNSx=0} only for $a\in\mathcal{N}_A(B)\cap\mathcal{N}_A(J)$, which in turn is equivalent to checking
that
\begin{equation}
\psi(a^*x^*xa)=0,\,\,\,\forall\,x\in J,\,a\in \mathcal{N}_A(B)\cap \mathcal{N}_A(J).
\label{GNSx=0alt}
\end{equation}
However, the above equalities are trivial, since $\psi\big|_J=\varphi\big|_J=0$ and
$a^*x^*xa\in a^*Ja\subset J$, $\forall\,x\in J$, $a\in \mathcal{N}_A(J)$.
\end{proof}

\begin{proposition}\label{reg-simple}
Suppose $B\subset A$ is a non-degenerate $C^*$-inclusion, which is regular. Then the following conditions are equivalent.
\begin{itemize}
\item[(i)] The ambient $C^*$-algebra $A$ is simple.
\item[(ii)] The inclusion $B\subset A$ is essential and minimal.
\end{itemize}
\end{proposition}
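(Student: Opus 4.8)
The plan is to prove the two implications separately, with the regularity hypothesis $A^{\text{reg}}=A$ doing essentially all of its work in the forward direction.

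For (i)$\Rightarrow$(ii), I would assume $A$ is simple. Essentiality is then immediate: since the only proper ideal $L\lhd A$ is $L=\{0\}$, condition {\sc (e)} of Theorem \ref{abs1bnrsw} holds trivially. For minimality, I would take an ideal $J\unlhd B$ that is fully normalized in $A$ relative to $B$ and assume $J\neq B$, so that $J\lhd B$. By regularity, \eqref{Areg=span} gives $A=A^{\text{reg}}=\overline{\text{span}}\,\mathcal{N}_A(B)$, while full normalization means $\mathcal{N}_A(B)\subset\mathcal{N}_A(J)$; hence $\mathcal{N}_A(B)\cap\mathcal{N}_A(J)=\mathcal{N}_A(B)$ generates $A^{\text{reg}}$, i.e. $J$ satisfies the ``sufficiently normalized'' condition {\sc (sn)}. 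Lemma \ref{normalized-ideal} then produces a proper ideal $L\lhd A^{\text{reg}}=A$ containing $J$, and simplicity of $A$ forces $L=\{0\}$, whence $J=\{0\}$. This shows the only fully normalized ideals are $\{0\}$ and $B$, so the inclusion is minimal.

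For (ii)$\Rightarrow$(i), I would assume $B\subset A$ is essential and minimal and let $L\lhd A$ be a proper ideal, aiming to conclude $L=\{0\}$. By Remark \ref{A-inv-example}, the ideal $B\cap L\unlhd B$ is fully normalized in $A$ relative to $B$, so minimality yields $B\cap L=\{0\}$ or $B\cap L=B$. In the first case essentiality (property {\sc (e)}) immediately gives $L=\{0\}$. The second case $B\subset L$ is impossible for proper $L$: choosing an approximate unit $(u_\lambda)\subset B$ for $A$ (available by non-degeneracy), every $a\in A$ satisfies $a=\lim_\lambda u_\lambda a\in L$, since $u_\lambda\in B\subset L$ and $L$ is a closed two-sided ideal, which would force $L=A$. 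Hence $L=\{0\}$, and $A$ is simple.

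The substantive content is already packaged in Lemma \ref{normalized-ideal} and Remark \ref{A-inv-example}, so once those are invoked the argument is largely a matter of assembly. The step I expect to require the most care is the minimality half of (i)$\Rightarrow$(ii): one must correctly translate ``fully normalized'' into condition {\sc (sn)} via regularity, and verify that the proper ideal delivered by Lemma \ref{normalized-ideal} genuinely contains $J$, so that simplicity collapses $J$ to zero. By contrast, both the essentiality direction and the entire implication (ii)$\Rightarrow$(i) are formal, resting only on the definitions of \emph{essential} and \emph{non-degenerate}.
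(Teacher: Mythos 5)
Your proof is correct and follows essentially the same route as the paper's: in both, the forward direction uses regularity to turn full normalization into condition {\sc (sn)} so that Lemma \ref{normalized-ideal} yields a proper ideal $L\lhd A^{\text{reg}}=A$ containing $J$, which simplicity kills, while the converse combines Remark \ref{A-inv-example} with essentiality. The only cosmetic difference is that the paper rules out the case $B\cap L=B$ upfront, observing that non-degeneracy forces $B\cap L$ to be a proper ideal of $B$, whereas you dispose of it at the end via the approximate-unit argument --- the same fact, differently placed.
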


\begin{proof}
$(i)\Rightarrow (ii)$. Assume $A$ is simple. The fact that $B\subset A$ is essential is obvious. Minimality is also fairly obvious, for if $J\lhd B$ is fully normalized in $A$, relative to $B$, then by Lemma
\ref{normalized-ideal}, there is some (proper) ideal $L\lhd A$, such that $J\subset L$, which by simplicity
forces $L=\{0\}$, thus $J=\{0\}$ as well.

$(ii)\Rightarrow (i)$. Assume now $B\subset A$ is essential and minimal, and let us prove that $A$ is simple.
Fix some proper ideal $L\lhd A$, and let us justify that $L$ must be equal to $\{0\}$. Since $B\subset A$ is non-degenerate, the intersection $B\cap L$ is a proper ideal in $B$.  As $B\subset A$ is essential, it suffices to prove that
$B\cap L=\{0\}$. However, this is immediate from minimality, since by Remark \ref{A-inv-example}, $B\cap L\lhd B$ is fully normalized in $A$ relative to $B$
\end{proof}

\begin{example}
Suppose $\alpha:G\ni g\longmapsto \alpha_g\in\text{Aut}(B)$ is a an action of a discrete group $G$ by automorphisms on a $C^*$-algebra $B$. The {\em full\/} (resp. {\em reduced\/}) crossed product $C^*$-algebra constructions
yield two non-degenerate regular inclusions
$B\subset B\ltimes_\alpha G$ and
$B\subset B\ltimes_{\alpha,\text{red}} G$. If $A$ denotes either one of these $C^*$-algebras, we always have a group homomorphism $G\ni g\longmapsto u_g\in\mathcal{U}(M(A))$, and a canonical subset
$\mathscr{X}=\{bu_g\,:\,b\in B,\,g\in G\}\subset \mathcal{N}_A(B)$, which generates $A$ as a $C^*$-algebra.
Within this framework, the inclusion $B\subset A$ is minimal (in the sense of Definition 
\ref{def-minimal}.B), if and only if $\alpha$ is a {\em minimal action}, in the sense that
$B$ contains no non-zero ideal
$J\lhd B$ such that $\alpha_g(J)=J$, $\forall\,g\in G$.

As Archbold and Spielberg showed (\cite{AS}), a sufficient condition for making the inclusion
$B\subset B\ltimes_{\alpha,\text{red}} G$ essential is that the action $\alpha$ is {\em topologically free}.
\end{example}

\begin{nagyrevenv}
The technical result below will be used in the next section.
\begin{lemma}\label{central-norm-ideal}
Assume we have two non-degenerate inclusions $C_0(Q)\subset D\subset A$, with $C_0(Q)\subset D$ central, along with two points $q_1,q_2\in Q$. For an element $n\in\mathcal{N}_A(C_0(Q))\cap\mathcal{N}_A(D)$, the following conditions are equivalent:
\begin{itemize}
\item[(i)] $nC_{0,q_1}(Q)n^*\subset C_{0,q_2}(Q)$;
\item[(ii)] $nJ^\text{\rm unif}_{q_1}n^*\subset J^\text{\rm unif}_{q_2}$.
\end{itemize}
{\rm (The ideals that appear in (ii) are the ideals in $D$, associated with the central inclusion $C_0(Q)\subset D$; in other words, $J^\text{unif}_{q_j}$ is the closed two-sided ideal in $D$, generated by $C_{0,q_j}(Q)$, $j=1,2.$)} 
\end{lemma}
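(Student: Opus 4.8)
The plan is to base everything on the description $J^{\text{unif}}_{q}=\overline{C_{0,q}(Q)\,D}$ of the uniform ideals. Because $C_0(Q)$ sits \emph{centrally} in $D$, every $s\in C_{0,q}(Q)$ commutes with all of $D$, so the two-sided ideal it generates collapses to the closure of the one-sided product set $C_{0,q}(Q)\,D$ (which is automatically two-sided and contains $C_{0,q}(Q)$). I would record this first, as it converts both (i) and (ii) into statements about such products, and it is used in each direction.

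The implication (ii)$\Rightarrow$(i) I expect to be short. Since $C_{0,q_1}(Q)\subset J^{\text{unif}}_{q_1}$, condition (ii) gives $nC_{0,q_1}(Q)n^*\subset J^{\text{unif}}_{q_2}$, while $n\in\mathcal{N}_A(C_0(Q))$ gives $nC_{0,q_1}(Q)n^*\subset C_0(Q)$; hence $nC_{0,q_1}(Q)n^*\subset C_0(Q)\cap J^{\text{unif}}_{q_2}$, and it remains only to identify $C_0(Q)\cap J^{\text{unif}}_{q_2}=C_{0,q_2}(Q)$. The inclusion ``$\supset$'' is clear. For ``$\subset$'' I would choose any state $\psi\in S(D)$ extending $\text{ev}_{q_2}$ (possible by non-degeneracy) and run exactly the Cauchy--Schwarz/centrality computation from the proof of Theorem \ref{simple-dom}: for $g\in C_{0,q_2}(Q)$, $d\in D$ one has $|\psi(gd)|^2\le\psi(gg^*)\psi(d^*d)=|g(q_2)|^2\psi(d^*d)=0$, so $\psi$ annihilates $C_{0,q_2}(Q)D$ and, by continuity, all of $J^{\text{unif}}_{q_2}$. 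Thus any $f\in C_0(Q)\cap J^{\text{unif}}_{q_2}$ obeys $f(q_2)=\psi(f)=0$, i.e. $f\in C_{0,q_2}(Q)$.

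For the main implication (i)$\Rightarrow$(ii), continuity of $\xi\mapsto n\xi n^*$ and closedness of $J^{\text{unif}}_{q_2}$ let me reduce to generators $\xi=sd$ with $0\le s\in C_{0,q_1}(Q)$ and $d\in D$. The key device is the elementary factorization fact: \emph{if $x\in A$ satisfies $xx^*\in C_{0,q_2}(Q)$, then $x\in\overline{C_{0,q_2}(Q)\,x}$.} Concretely, with $\phi_\varepsilon(t)=t^{1/2}(t+\varepsilon)^{-1/2}$ one has
$$x=\lim_{\varepsilon\to 0}\phi_\varepsilon(xx^*)\,x,$$
where $\phi_\varepsilon(xx^*)\in C_{0,q_2}(Q)$ because $\phi_\varepsilon(0)=0$ (so the functional calculus stays in $C^*(xx^*)\subset C_0(Q)$) and $(xx^*)(q_2)=0$. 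Applying this to $x=ns^{1/2}$, condition (i) yields $xx^*=nsn^*\in C_{0,q_2}(Q)$, and therefore $nsdn^*=x\,(s^{1/2}dn^*)=\lim_\varepsilon\phi_\varepsilon(xx^*)\,(nsdn^*)$. Since $n\in\mathcal{N}_A(D)$ we have $nsdn^*\in nDn^*\subset D$, so each term $\phi_\varepsilon(xx^*)(nsdn^*)$ lies in $C_{0,q_2}(Q)D\subset J^{\text{unif}}_{q_2}$, and so does the limit.

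The main obstacle is precisely this last implication: as $n$ is only a normalizer (not invertible), one cannot conjugate the generating ideal $C_{0,q_1}(Q)$ ``multiplicatively'' to land inside $C_{0,q_2}(Q)D$. The factorization fact circumvents this by extracting a vanishing scalar factor $\phi_\varepsilon(xx^*)\in C_{0,q_2}(Q)$ directly from the single positive element $xx^*=nsn^*$ supplied by (i). The only quantitative point is the convergence $\|x-\phi_\varepsilon(xx^*)x\|\to 0$, which via $\|x-\phi_\varepsilon(xx^*)x\|^2=\sup_{t\in\operatorname{sp}(xx^*)}(1-\phi_\varepsilon(t))^2\,t$ reduces to the routine scalar estimate $(1-\phi_\varepsilon(t))^2\,t\le\varepsilon$ for all $t\ge0$.
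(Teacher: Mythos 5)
Your proof is correct, and its overall architecture matches the paper's: both directions run through the identification of $J^{\text{unif}}_{q}$ with $\overline{C_{0,q}(Q)D}$, the direction (ii)$\Rightarrow$(i) through the equality $C_0(Q)\cap J^{\text{unif}}_{q_2}=C_{0,q_2}(Q)$ (which the paper simply asserts as obvious; your Cauchy--Schwarz verification via a state extending $\text{ev}_{q_2}$, in the style of Theorem \ref{simple-dom}, is a sound way to supply it), and the main direction (i)$\Rightarrow$(ii) through an approximate factorization of $n\xi n^*$ as a limit of products of a vanishing factor in $C_{0,q_2}(Q)$ with an element of $D$ (the latter coming from $n\in\mathcal{N}_A(D)$). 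Where you genuinely diverge is the factorization device. The paper chooses polynomials $h_k$ without constant term with $t^2h_k(t)\to t$ uniformly on $[0,\|n\|^2]$, writes $n=\lim_k nh_k(n^*n)n^*n$ (legitimate since $n^*n\in C_0(Q)$, cf.\ Remark \ref{norm-support}), and obtains $nfdn^*=\lim_k\bigl(nh_k(n^*n)fn^*\bigr)(ndn^*)$, applying hypothesis (i) to the whole sequence $h_k(n^*n)f\in C_{0,q_1}(Q)$; this handles an arbitrary $f\in C_{0,q_1}(Q)$ with no positivity reduction. You instead invoke (i) just once, on a single positive $s$, and then do the functional calculus on the \emph{output} $nsn^*\in C_{0,q_2}(Q)$ with $\phi_\varepsilon(t)=t^{1/2}(t+\varepsilon)^{-1/2}$, paying for this with the (routine) decomposition of $f$ into positive parts and the scalar estimate $(1-\phi_\varepsilon(t))^2t\le\varepsilon$, which you state correctly; note also that $\phi_\varepsilon(xx^*)x\to x$ is exactly the same Cohen-type approximation the paper realizes with its polynomials, just performed on the other side of $n$. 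The two devices are interchangeable here: yours is marginally more economical in its use of (i) and makes explicit two steps the paper leaves implicit, while the paper's version avoids the detour through $s\geq 0$ and $s^{1/2}$. All the auxiliary points you flag --- $nsdn^*\in nDn^*\subset D$, $\phi_\varepsilon(xx^*)\in C_{0,q_2}(Q)$ because $\phi_\varepsilon(0)=0$, and $J^{\text{unif}}_{q}=\overline{C_{0,q}(Q)D}$ from centrality plus an approximate unit for $C_{0,q}(Q)$ --- check out.
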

\begin{proof}
The implication $(ii)\Rightarrow (i)$ is fairly obvious, since for every $q\in Q$ one has the equality $C_0(Q)\cap J^\text{unif}_q=C_{0,q}(Q)$.

For the other implication, assume (i) and note that by continuity of the map
$x\longmapsto nxn^*$, all we need to prove is that, for any $f\in C_{0,q_1}(Q)$ and any $d\in D$, the element
$nfdn^*$ belongs to $J^\text{unif}_{q_2}$. For this purpose, we fix a sequence $(h_k)_k$ of polynomials, such that
$\lim_{k\to\infty}t^2h_k(t)=t$, uniformly on $[0,\|n\|^2]$. (Approximate the functions $\root{k}\of{t}$ with polynomials without constant term.) With this choice in mind, it follows that
$n=\lim_knh_k(n^*n)n^*n$, so we can write
\begin{equation}
nfdn^*=\lim_k nh_k(n^*n)n^*nfdn^*=
\lim_k (nh_k(n^*n)fn^*)(ndn^*).
\label{central-norm-ideal-eq}
\end{equation}
Since $h_k(n^*n)\in C^b(Q)$, by our assumption on $f$, we know that the elements $h_k(n^*n)f$ belong to $C_{0,q_1}(Q)$, so the elements $nh_k(n^*n)fn^*$ belong to $nC_{0,q_1}(Q)n^*\subset C_{0,q_2}(Q)$, by condition (i). Now we are done, because $ndn^*$ belongs to
$D$, so \eqref{central-norm-ideal-eq} forces $nfdn^*\in \overline{C_{0,q_2}(Q)D}=J^\text{unif}_{q_2}$.
\end{proof}
\end{nagyrevenv}

\section{Applications to Groupoid \texorpdfstring{$C^*$}{C*}-algebras}\label{grp-sec}

\begin{convention}
Besides the standard conditions set forth at the beginning of Section \ref{grp-back}, all groupoids mentioned in this section are also assumed to be \'etale.
\end{convention}

\begin{remark}
As pointed out in \cite{Renault3}, in the case of an \'etale groupoid $\mathcal{G}$,
the non-degenerate inclusions $C_0(\mathcal{G}^{(0)})\subset C^*(\mathcal{G})$ and
$C_0(\mathcal{G}^{(0)})\subset C_{\text{red}}^*(\mathcal{G})$ are regular, since a large supply of normalizers consists of functions supported on bisections. Specifically, if we consider the space
$$\mathfrak{N}(\mathcal{G})=\bigcup_{\substack{\mathcal{B}\subset\mathcal{G}\\ \text{open bisection}}}C_c(\mathcal{B})\subset C_c(\mathcal{G}),$$
then $C_c(\mathcal{G})=\text{span}\,\mathfrak{N}(\mathcal{G})$, and furthermore all functions in $\mathfrak{N}(\mathcal{G})$ (hereafter referred to as {\em elementary normalizers\/})
normalize $C_0(\mathcal{G}^{(0)})$ in both $C^*(\mathcal{G})$ and in $C^*_{\text{red}}(\mathcal{G})$.
(As above, we view $C_c(\mathcal{G})$ as a dense $*$-subalgebra in both $C^*(\mathcal{G})$ and $C_{\text{red}}^*(\mathcal{G})$.)
\begin{nagyrevenv}
This statement can be justified in two ways.
\begin{itemize}
\item[A.] For any open bisection $\mathcal{B}$, one has the inclusion
\begin{equation}
\mathcal{B}\mathcal{G}^{(0)}\mathcal{B}^{-1}\subset
\mathcal{G}^{(0)}.\label{BGB}
\end{equation}
so in particular, if
 $n\in C_c(\mathcal{B})$, then using \eqref{sup-prod} and \eqref{sup-inv},
it follows that, for every $f\in C_c(\mathcal{G}^{(0)})$ the function
we have
$n\times f\times n^*$ is again supported in
$\mathcal{G}^{(0)}$.
\item[B.] Whenever $\mathcal{B}_1,\mathcal{B}_2\subset\mathcal{G}$ are open bisections, and $n_j\in C_c(\mathcal{B}_j)$, $j=1,2$, it follows that, for every $f\in C_c(\mathcal{G})$, one has the equality
\begin{equation}
\begin{split}
&(n_1\times f\times n_2)(\gamma)=
\\
&=\left\{
\begin{array}{l}
n_1\big((r|_{\mathcal{B}_1})^{-1}(r(\gamma))\big)
n_2\big((s|_{\mathcal{B}_2})^{-1}(s(\gamma))\big)
f\big([(r|_{\mathcal{B}_1})^{-1}(r(\gamma))]^{-1}\gamma
[(s|_{\mathcal{B}_2})^{-1}(s(\gamma))]^{-1}\big)
,\\
\qquad\qquad \text{if }r(\gamma)\in r(\mathcal{B}_1)
\text{ and }s(\gamma)\in s(\mathcal{B}_2)\\
0\text{, otherwise}
\end{array}
\right.
\end{split}
\label{n1fn2}
\end{equation}
When we specialize \eqref{n1fn2} to the particular case when $f\in C_c(\mathcal{G}^{(0)})$, and $n\in C_c(\mathcal{B})$ -- for some open bisection
$\mathcal{B}\subset\mathcal{G}$, we explicitly obtain:
\begin{equation}
(n\times f\times n^*)(\gamma)=
\left\{
\begin{array}{l}
\left|n\big((r|_{\mathcal{B}})^{-1}(\gamma)\big)\right|^2f\left(s\big((r|_{\mathcal{B}})^{-1}(\gamma)\big)\right)
\text{, if }\gamma\in r(\mathcal{B})\\
0\text{, otherwise}
\end{array}\right.
\label{nfn*}
\end{equation}
so, $n\times f\times n^*$ is indeed supported in $\mathcal{G}^{(0)}$.
\end{itemize}
\end{nagyrevenv}
\end{remark} 

\begin{nagyrevenv}
\begin{remark}\label{rem-nEn}
For an open bisection $\mathcal{B}$, besides the inclusion \eqref{BGB}, we also have the inclusion
\begin{equation}
\mathcal{B}(\mathcal{G}\smallsetminus \mathcal{G}^{(0)})\mathcal{B}^{-1}\subset
\mathcal{G}\smallsetminus \mathcal{G}^{(0)}.\label{BGnotB}
\end{equation}
Using this inclusion it follows that conditional expectation $\mathbb{E}:C^*(\mathcal{G})
\to C_0(\mathcal{G}^{(0)})$ from Section \ref{grp-back} is {\em normalized by all $n\in\mathfrak{N}(\mathcal{G})$}, in the sense that: 
\begin{equation}
\mathbb{E}(nan^*)=n\mathbb{E}(a)n^*,\,\,\,\forall\,n\in\mathfrak{N}(\mathcal{G}), \,a\in C^*(\mathcal{G}).
\label{Enormalized}
\end{equation}
Indeed, if we start with some $f\in C_c(\mathcal{G})$, then $\mathbb{E}(f)-f$ is supported in $\mathcal{G} \smallsetminus \mathcal{G}^{(0)}$, which by \eqref{sup-prod}, \eqref{sup-inv} and \eqref{BGnotB} implies that
$n\times (\mathbb{E}(f)-f)\times n^*$ is again supported in $\mathcal{G} \smallsetminus \mathcal{G}^{(0)}$, thus it vanishes when restricted to $\mathcal{G}^{(0)}$, i.e.
$\mathbb{E} (n\times (\mathbb{E}(f)-f)\times n^*)=0$,
which means that
$$\mathbb{E}(n\times f\times n^*)=\mathbb{E}(n\times \mathbb{E}(f)\times n^*)=
n\times \mathbb{E}(f)\times n^*.$$
(For the last equality, we use the fact that $n$ normalizes $C_0(\mathcal{G}^{(0)})$, so
the element $n\times \mathbb{E}(f)\times n^*$ belongs to $C_0(\mathcal{G}^{(0)})$, thus it is fixed by $\mathbb{E}$.)
\end{remark}
\end{nagyrevenv}

\begin{remark}\label{rem-nfn*}
The formula
\eqref{nfn*} is also valid for $f\in C_0(\mathcal{G}^{(0)})$. If $\mathfrak{A}$ denotes any one of $C^*(\mathcal{G})$ or
$C_{\text{red}}^*(\mathcal{G})$, using Lemma \ref{normalized-ideal}, the following conditions are equivalent.
\begin{itemize}
\item[(i)] The inclusion $C_0(\mathcal{G}^{(0)})\subset \mathfrak{A}$ is minimal, in the sense of Definition \ref{def-minimal}.B.
\item[(ii)] The only ideals $J\unlhd C_0(\mathcal{G}^{(0)})$ satisfying
$nJn^*\subset J,\,\,\,\forall\,n\in\mathfrak{N}(\mathcal{G}),$
are the trivial ideals: $J=\{0\}$ and $J=C_0(\mathcal{G}^{(0)})$.
\item[(iii)] Every non-empty subset $\mathcal{X}\subset\mathcal{G}^{(0)}$ satisfying the inclusion
\begin{equation}
r\big(s^{-1}(\mathcal{X})\big)\subset\mathcal{X}
\label{def-Xinv}
\end{equation}
is dense in $\mathcal{G}^{(0)}$.
\item[(iii')] The only subsets $\mathcal{X}\subset\mathcal{G}^{(0)}$, which satisfy \eqref{def-Xinv} and are either open or closed, are the trivial ones: $\mathcal{X}=\varnothing$ and
$\mathcal{X}=\mathcal{G}^{(0)}$.
\end{itemize}
According to the standard groupoid terminology, these conditions characterize the so-called {\em minimal\/} groupoids.
(Whether $\mathcal{G}$ is minimal or not, a set of units satisfying \eqref{def-Xinv} is termed {\em invariant}.)
\end{remark}

\begin{mycomment}
By Proposition \ref{reg-simple}, minimality of $\mathcal{G}$ is always a necessary condition for the simplicity of
either $C^*(\mathcal{G})$ or
$C^*_\text{red}(\mathcal{G})$. 
Although it is tempting to try to fit groupoid minimality in the framework provided in 
Proposition \ref{reg-simple}, this approach will be less fruitful, especially in the reduced case.
\end{mycomment}

\begin{notations}
Given an \'etale groupoid $\mathcal{G}$, we denote  its {\em isotropy groupoid}, i.e. the set 
$\{\gamma\in\mathcal{G}\,:\,r(\gamma)=s(\gamma)\}$ by $\text{Iso}(\mathcal{G})$.
Although $\text{Iso}(\mathcal{G})$ is a nice groupoid in its own right, its {\em interior}, hereafter denoted by 
$\text{IntIso}(\mathcal{G})$, is more manageable, because the natural inclusion
$C_c(\text{IntIso}(\mathcal{G}))\subset C_c(\mathcal{G})$  gives rise to a natural {\em $C^*$-inclusion} 
\begin{equation}
\mathfrak{i}_{\text{full}}:C^*(\text{IntIso}(\mathcal{G}))
\hookrightarrow C^*(\mathcal{G}).
\label{C*-iso-incl}
\end{equation}
(The existence of $\mathfrak{i}_{\text{full}}$ as a $*$-homomorphism is obvious, by the definition of 
$\|\,.\,\|_{\text{full}}$.
We  believe the injectivity of $\mathfrak{i}_{\text{full}}$ is well known, but in case it is not, we provide a proof in the Appendix.)

On the other hand, since $\text{IntIso}(\mathcal{G})$ and $\mathcal{G}$ have the same unit space
$\mathcal{G}^{(0)}$, we also have a commutative diagram of conditional expectations
$$
\xymatrix{
C^*(\text{IntIso}(\mathcal{G}))\ar[r]^{\quad\mathfrak{i}_{\text{full}}}\ar[d]_{\mathbb{E}^{\text{IntIso}(\mathcal{G})}}
& C^*(\mathcal{G})\ar[d]^{\mathbb{E}^{\mathcal{G}}}\\
C_0(\text{IntIso}(\mathcal{G})^{(0)})\ar@{=}[r] &C_0(\mathcal{G}^{(0)})
}
$$
which gives rise to another natural $C^*$-inclusion
\begin{equation}
\mathfrak{i}_{\text{red}}:C^*_{\text{red}}(\text{IntIso}(\mathcal{G}))\hookrightarrow C^*_{\text{red}}(\mathcal{G}).
\label{C*red-iso-incl}
\end{equation}
(The injectivity of $\mathfrak{i}_{\text{red}}$ can be justified, for instance, using Proposition \ref{embed-crit}.)
\end{notations}

\begin{mycomment}
We caution the reader that, for an arbitrary unit $u\in\mathcal{G}^{(0)}$, one might have a strict inclusion
of isotropy groups $\text{IntIso}(\mathcal{G})u\subsetneq\text{Iso}(\mathcal{G})u(=u\mathcal{G}u)$; nevertheless, in this case. $\text{IntIso}(\mathcal{G})u$ is always a {\em normal\/} subgroup of $\text{Iso}(\mathcal{G})u$.
However, as pointed out in \nagyrev{\cite[Lemma 3.3]{BNRSW}}, under our overall assumptions on $\mathcal{G}$, the set
$$\mathcal{G}^{(0)}_\circ=\{u\in\mathcal{G}^{(0)}\,:\,\text{IntIso}(\mathcal{G})u=\text{Iso}(\mathcal{G})u\}$$
is a dense $G_\delta$ subset of $\mathcal{G}^{(0)}$. 
\end{mycomment}

\begin{notations}
We adopt the same notations as those from Section \ref{bundles-sec}, for the \'etale group bundle
$\text{IntIso}(\mathcal{G})$ (with unit space $\mathcal{G}^{(0)}$), so now, for each unit $u\in\mathcal{G}^{(0)}$,
 the commutative diagrams
\eqref{e-maps} become
\begin{equation}
\xymatrix{
C^*(\text{IntIso}(\mathcal{G}))\ar[r]^{\pi^{\text{IntIso}(\mathcal{G})}_{\text{red}}} \ar[d]_{\mathfrak{e}^{\text{full}}_u}
&
C^*_{\text{red}}(\text{IntIso}(\mathcal{G}))\ar[d]_{\mathfrak{e}^{\text{other}}_u} \ar[dr]^{\mathfrak{e}^{\text{red}}_u}
&\\
C^*(\text{IntIso}(\mathcal{G})u)\ar[r]_{\boldsymbol{\rho}_u}&
C^*_{\text{other},u}(\text{IntIso}(\mathcal{G})u)\ar[r]_{\boldsymbol{\kappa}_u}& 
C^*_{\text{red}}(\text{IntIso}(\mathcal{G})u)
}
\label{iso-e-maps}
\end{equation}
The three surjective $*$-homomorphisms pointing downwards in the above diagram have the following kernels:
\begin{itemize}
\item $\text{ker}\,\mathfrak{e}^{\text{full}}_u=J^{\text{full}}_u$, the closed two-sided ideal in $C^*(\text{IntIso}(\mathcal{G}))$ generated by
$C_{0,u}(\mathcal{G}^{(0)})$;
\item $\text{ker}\,\mathfrak{e}^{\text{other}}_u=J^{\text{other}}_u$, the closed two-sided ideal  in $C^*_{\text{red}}(\text{IntIso}(\mathcal{G}))$ generated by
$C_{0,u}(\mathcal{G}^{(0)})$;
\item $\text{ker}\,\mathfrak{e}^{\text{red}}_u=\text{ker} \,p^{\mathbb{E}_{\text{red}}}_u=
K_{\text{ev}_u\circ\mathbb{E}_{\text{red}}}$, the kernel of the GNS representation associated with the state
$\text{ev}_u\circ\mathbb{E}_{\text{red}}=\boldsymbol{\tau}^{\text{red}}_{\text{IntIso}(\mathcal{G})u}\circ
\mathfrak{e}^{\text{red}}_u$ (here $\boldsymbol{\tau}^{\text{red}}_{\text{IntIso}(\mathcal{G})u}$ is the canonical trace
on the reduced group $C^*$-algebra $C^*_{\text{red}}(\text{IntIso}(\mathcal{G})u)$);
\end{itemize}
as in Section \ref{bundles-sec}, by construction, we have $\pi^{\text{IntIso}(\mathcal{G})}_{\text{red}}(J^{\text{full}}_u)=
J^{\text{other}}_u\subset\text{ker}\,\mathfrak{e}^{\text{red}}_u$.
By a slight abuse in notation, we will denote $\text{IntIso}(\mathcal{G})^{(0)}_{\text{r-cont}}$ simply by
$\mathcal{G}^{(0)}_{\text{r-cont}}$; in other words,
$$\mathcal{G}^{(0)}_{\text{r-cont}}=\{u\in\mathcal{G}^{(0)}\,:\,\text{ker}\,\mathfrak{e}^{\text{other}}_u=
\text{ker}\,\mathfrak{e}^{\text{red}}_u\}.$$
\end{notations}

\begin{remark}
\begin{nagyrevenv}
For any open bisection $\mathcal{B}$ we have the inclusion
$$\mathcal{B}\,\text{IntIso}(\mathcal{G})\mathcal{B}^{-1}\subset
\text{IntIso}(\mathcal{G}),$$
so using \eqref{sup-prod} and \eqref{sup-inv}, it follows that for every 
$n\in C_c(\mathcal{B})$ and every $f\in C_c(\text{IntIso}(\mathcal{G}))$, the function $n\times f\times n^*$ again belongs to
$C_c(\text{IntIso}(\mathcal{G}))$. Therefore we have the following properties. 
\end{nagyrevenv}
\begin{itemize}
\item[A.] When we view $\mathfrak{N}(\mathcal{G})$ as a subset of $C^*(\mathcal{G})$, and $C^*(\text{IntIso}(\mathcal{G}))$ as a $C^*$-subalgebra of $C^*(\mathcal{G})$, all elements in $\mathfrak{N}(\mathcal{G})$ normalize
$C^*(\text{IntIso}(\mathcal{G}))$, so the inclusion \eqref{C*-iso-incl} is regular.
\item[B.] When we view $\mathfrak{N}(\mathcal{G})$ as a subset of $C^*_{\text{red}}(\mathcal{G})$, and 
$C^*_{\text{red}}(\text{IntIso}(\mathcal{G}))$ as a $C^*$-subalgebra of $C^*_{\text{red}}(\mathcal{G})$, all elements in $\mathfrak{N}(\mathcal{G})$ normalize
$C^*_{\text{red}}(\text{IntIso}(\mathcal{G}))$, so the inclusion \eqref{C*red-iso-incl} is also regular.
\end{itemize}
\end{remark}

\begin{lemma}\label{lemma-eu}
Assume $\boldsymbol{\omega}=(\gamma,\mathcal{B},n)$ is a triple consisting of an element $\gamma\in\mathcal{G}$,
an open bisection $\mathcal{B}\ni \gamma$, and some function $n\in C_c(\mathcal{B})$ with $n(\gamma)=1$.
\begin{itemize}
\item[(i)] When viewing $C_c(\mathcal{B})\subset\mathfrak{N}(\mathcal{G})\subset
\mathcal{N}_{C^*(\mathcal{G})}\big(C^*(\text{\rm IntIso}(\mathcal{G}))\big)\subset
 C^*(\mathcal{G})$, the map
\begin{equation}
\text{\rm ad}^{\text{\rm full}}_n:C^*(\text{\rm IntIso}(\mathcal{G}))\ni a\longmapsto
nan^*\in  C^*(\text{\rm IntIso}(\mathcal{G}))
\label{ad-full}
\end{equation}
sends the ideal $J^{\text{\rm full}}_{s(\gamma)}=\text{\rm ker}\,\mathfrak{e}^{\text{\rm full}}_{s(\gamma)}$ into the ideal
$J^{\text{\rm full}}_{r(\gamma)}=\text{\rm ker}\,\mathfrak{e}^{\text{\rm full}}_{r(\gamma)}$.
In particular, \eqref{ad-full}
induces a unique linear map
$\mathfrak{a}^{\text{\rm full}}_{\boldsymbol{\omega}}:C^*(\text{\rm IntIso}(\mathcal{G})s(\gamma))\to
C^*(\text{\rm IntIso}(\mathcal{G})r(\gamma))$, which makes a commutative diagram
$$
\xymatrix{
C^*(\text{\rm IntIso}(\mathcal{G}))\ar[r]^{\text{\rm ad}^{\text{\rm full}}_n}\ar[d]_{\mathfrak{e}^{\text{\rm full}}_{s(\gamma)}}
&
C^*(\text{\rm IntIso}(\mathcal{G}))\ar[d]^{\mathfrak{e}^{\text{\rm full}}_{r(\gamma)}}\\
C^*(\text{\rm IntIso}(\mathcal{G})s(\gamma))
\ar[r]_{\mathfrak{a}^{\text{\rm full}}_{\boldsymbol{\omega}}}
&C^*(\text{\rm IntIso}(\mathcal{G})r(\gamma))
}
$$
\item[(ii)] Likewise (when working with the inclusion $C^*_{\text{\rm red}}(\text{\rm IntIso}(\mathcal{G}))
\subset C^*_{\text{\rm red}}(\mathcal{G})$), the map
\begin{equation}
\text{\rm ad}^{\text{\rm red}}_n:C^*_{\text{\rm red}}(\text{\rm IntIso}(\mathcal{G}))\ni a\longmapsto
nan^*\in  C^*_{\text{\rm red}}(\text{\rm IntIso}(\mathcal{G}))
\label{ad-red}
\end{equation}
sends
$J^{\text{\rm other}}_{s(\gamma)}=\text{\rm ker}\,\mathfrak{e}^{\text{\rm other}}_{s(\gamma)}$ into
$J^{\text{\rm other}}_{r(\gamma)}=\text{\rm ker}\,\mathfrak{e}^{\text{\rm other}}_{r(\gamma)}$, so 
\eqref{ad-red}
induces a unique linear map
$\mathfrak{a}^{\text{\rm other}}_{\boldsymbol{\omega}}:C^*_{\text{\rm other},s(\gamma)}(\text{\rm IntIso}(\mathcal{G})s(\gamma))\to
C^*_{\text{\rm other},r(\gamma)}(\text{\rm IntIso}(\mathcal{G})r(\gamma))$, which makes a commutative diagram
$$
\xymatrix{
C^*_{\text{\rm red}}(\text{\rm IntIso}(\mathcal{G}))\ar[r]^{\text{\rm ad}^{\text{\rm red}}_n}
\ar[d]_{\mathfrak{e}^{\text{\rm other}}_{s(\gamma)}}
&
C^*_{\text{\rm red}}(\text{\rm IntIso}(\mathcal{G}))\ar[d]^{\mathfrak{e}^{\text{\rm other}}_{r(\gamma)}}\\
C^*_{\text{\rm other},s(\gamma)}(\text{\rm IntIso}(\mathcal{G})s(\gamma))
\ar[r]_{\mathfrak{a}^{\text{\rm other}}_{\boldsymbol{\omega}}}
&
C^*_{\text{\rm other},r(\gamma)}(\text{\rm IntIso}(\mathcal{G})r(\gamma))
}
$$
\item[(iii)] The map \eqref{ad-red} sends 
the ideal $\text{\rm ker}p^{\mathbb{E}_{\text{\rm red}}}_{s(\gamma)}
=\text{\rm ker}\,\mathfrak{e}^{\text{\rm red}}_{s(\gamma)}$ into the ideal
$\text{\rm ker}p^{\mathbb{E}_{\text{\rm red}}}_{r(\gamma)}
=\text{\rm ker}\,\mathfrak{e}^{\text{\rm red}}_{r(\gamma)}$, so it also induces a unique linear map
$\mathfrak{a}^{\text{\rm red}}_{\boldsymbol{\omega}}:C^*_{\text{\rm red}}(\text{\rm IntIso}(\mathcal{G})s(\gamma))\to
C^*_{\text{\rm red}}(\text{\rm IntIso}(\mathcal{G})r(\gamma))$, which makes a commutative diagram
$$
\xymatrix{
C^*_{\text{\rm red}}(\text{\rm IntIso}(\mathcal{G}))\ar[r]^{\text{\rm ad}^{\text{\rm red}}_n}
\ar[d]_{\mathfrak{e}^{\text{\rm red}}_{s(\gamma)}}
&
C^*_{\text{\rm red}}(\text{\rm IntIso}(\mathcal{G}))\ar[d]^{\mathfrak{e}^{\text{\rm red}}_{r(\gamma)}}\\
C^*_{\text{\rm red}}(\text{\rm IntIso}(\mathcal{G})s(\gamma))
\ar[r]_{\mathfrak{a}^{\text{\rm red}}_{\boldsymbol{\omega}}}
&
C^*_{\text{\rm red}}(\text{\rm IntIso}(\mathcal{G})r(\gamma))
}
$$
\item[(iv)] All maps $\mathfrak{a}^{\text{\rm full}}_{\boldsymbol{\omega}}$,
$\mathfrak{a}^{\text{\rm other}}_{\boldsymbol{\omega}}$, $\mathfrak{a}^{\text{\rm red}}_{\boldsymbol{\omega}}$
are $*$-isomorphisms, with inverses 
$\mathfrak{a}^{\text{\rm full}}_{\boldsymbol{\omega}^{\text{\rm op}}}$,
$\mathfrak{a}^{\text{\rm other}}_{\boldsymbol{\omega}^{\text{\rm op}}}$
$\mathfrak{a}^{\text{\rm red}}_{\boldsymbol{\omega}^{\text{\rm op}}}$, respectively, are the corresponding maps associated with the triple
$\boldsymbol{\omega}^{\text{\rm op}}=(\gamma^{-1},\mathcal{B}^{-1},n^*)$.
\end{itemize}
\end{lemma}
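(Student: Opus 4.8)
The plan is to funnel all four parts through one explicit computation on the dense subalgebra $C_c(\text{IntIso}(\mathcal{G}))$, supplemented by Lemma~\ref{central-norm-ideal} to handle the one ideal that is not of uniform type. First, two facts make $\text{ad}_n$ act at all: since $\mathcal{B}$ is an open bisection, $n\in C_c(\mathcal{B})$ normalizes $C_0(\mathcal{G}^{(0)})$, and from the inclusion $\mathcal{B}\,\text{IntIso}(\mathcal{G})\,\mathcal{B}^{-1}\subset\text{IntIso}(\mathcal{G})$ it also normalizes both $C^*(\text{IntIso}(\mathcal{G}))$ and $C^*_{\text{red}}(\text{IntIso}(\mathcal{G}))$, so \eqref{ad-full} and \eqref{ad-red} genuinely map these algebras into themselves. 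The one scalar input needed is the abelian identity obtained by evaluating \eqref{nfn*} at the unit $r(\gamma)$: for $f\in C_0(\mathcal{G}^{(0)})$ one has $(n\times f\times n^*)(r(\gamma))=|n(\gamma)|^2 f(s(\gamma))=f(s(\gamma))$, whence $nC_{0,s(\gamma)}(\mathcal{G}^{(0)})n^*\subset C_{0,r(\gamma)}(\mathcal{G}^{(0)})$.

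For parts (i) and (ii) I would invoke Lemma~\ref{central-norm-ideal} for the central inclusions $C_0(\mathcal{G}^{(0)})\subset C^*(\text{IntIso}(\mathcal{G}))$ and $C_0(\mathcal{G}^{(0)})\subset C^*_{\text{red}}(\text{IntIso}(\mathcal{G}))$ (sitting respectively in $C^*(\mathcal{G})$ and $C^*_{\text{red}}(\mathcal{G})$), with $q_1=s(\gamma)$, $q_2=r(\gamma)$. The displayed abelian inclusion is exactly hypothesis (i) of that lemma, so its conclusion (ii) yields $\text{ad}^{\text{full}}_n(J^{\text{full}}_{s(\gamma)})\subset J^{\text{full}}_{r(\gamma)}$ and $\text{ad}^{\text{red}}_n(J^{\text{other}}_{s(\gamma)})\subset J^{\text{other}}_{r(\gamma)}$, since $J^{\text{full}}_u$ and $J^{\text{other}}_u$ are precisely the uniform ideals of those two inclusions. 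Because $J^{\text{full}}_{s(\gamma)}=\ker\mathfrak{e}^{\text{full}}_{s(\gamma)}$ and $J^{\text{other}}_{s(\gamma)}=\ker\mathfrak{e}^{\text{other}}_{s(\gamma)}$, the universal property of the quotient then produces the induced $*$-homomorphisms $\mathfrak{a}^{\text{full}}_{\boldsymbol{\omega}}$ and $\mathfrak{a}^{\text{other}}_{\boldsymbol{\omega}}$ together with the two commuting squares.

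The remaining content---part (iii), and the identification of all three maps needed for part (iv)---I would extract from a single computation. Specializing \eqref{n1fn2} (with $n_1=n$, $n_2=n^*$) to $\delta\in\text{IntIso}(\mathcal{G})r(\gamma)$, where $r(\delta)=s(\delta)=r(\gamma)\in r(\mathcal{B})=s(\mathcal{B}^{-1})$, and using $n(\gamma)=1$, gives $(n\times g\times n^*)(\delta)=g(\gamma^{-1}\delta\gamma)$ for every $g\in C_c(\text{IntIso}(\mathcal{G}))$. Substituting into \eqref{rep-GGu} yields, for each of the three quotient maps $\mathfrak{e}^{\bullet}_u$ with $\bullet\in\{\text{full},\text{other},\text{red}\}$,
\[
\mathfrak{e}^{\bullet}_{r(\gamma)}(n\times g\times n^*)
=\sum_{\eta\in\text{IntIso}(\mathcal{G})s(\gamma)}g(\eta)\,\mathbf{v}_{\gamma\eta\gamma^{-1}}
=\Theta_{\gamma}\big(\mathfrak{e}^{\bullet}_{s(\gamma)}(g)\big),
\]
where $\Theta_\gamma$ denotes the $*$-isomorphism of group $*$-algebras induced by the group isomorphism $\text{IntIso}(\mathcal{G})s(\gamma)\ni\eta\mapsto\gamma\eta\gamma^{-1}\in\text{IntIso}(\mathcal{G})r(\gamma)$ (again well-defined via $\mathcal{B}\,\text{IntIso}(\mathcal{G})\,\mathcal{B}^{-1}\subset\text{IntIso}(\mathcal{G})$). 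For the full and the reduced completions the relevant norms are intrinsic to the fibre group, so $\Theta_\gamma$ extends to an isometric $*$-isomorphism; the displayed identity, valid on the dense subalgebra $C_c(\text{IntIso}(\mathcal{G}))$, then extends by continuity, giving simultaneously the kernel inclusion $\text{ad}^{\text{red}}_n(\ker\mathfrak{e}^{\text{red}}_{s(\gamma)})\subset\ker\mathfrak{e}^{\text{red}}_{r(\gamma)}$ required in (iii), the corresponding commuting square, and the identifications $\mathfrak{a}^{\text{full}}_{\boldsymbol{\omega}}=\Theta_\gamma$ and $\mathfrak{a}^{\text{red}}_{\boldsymbol{\omega}}=\Theta_\gamma$.

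Finally, for part (iv) I would apply the whole argument to $\boldsymbol{\omega}^{\text{op}}=(\gamma^{-1},\mathcal{B}^{-1},n^*)$---a legitimate triple, since $n^*(\gamma^{-1})=\overline{n(\gamma)}=1$---to obtain the maps induced by conjugation by $\gamma^{-1}$. As group conjugations satisfy $\text{Ad}_{\gamma^{-1}}\circ\text{Ad}_\gamma=\text{id}$ and $\text{Ad}_\gamma\circ\text{Ad}_{\gamma^{-1}}=\text{id}$, the two induced maps are mutually inverse, proving that each $\mathfrak{a}^{\bullet}_{\boldsymbol{\omega}}$ is a $*$-isomorphism with inverse $\mathfrak{a}^{\bullet}_{\boldsymbol{\omega}^{\text{op}}}$. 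I expect the main obstacle to be the intermediate ``other'' completion: because $\|\cdot\|_{\text{other},u}$ is not intrinsic to the fibre group, $\Theta_\gamma$ is not a priori bounded there, so one cannot simply extend it by continuity as in the full and reduced cases. Instead one relies on Lemma~\ref{central-norm-ideal} for the existence and boundedness of $\mathfrak{a}^{\text{other}}_{\boldsymbol{\omega}}$, and then recognizes it as a $*$-homomorphism---and, for (iv), as the inverse of $\mathfrak{a}^{\text{other}}_{\boldsymbol{\omega}^{\text{op}}}$---by noting that it agrees with the $*$-homomorphism $\Theta_\gamma$ on the dense $*$-subalgebra $\mathbb{C}[\text{IntIso}(\mathcal{G})s(\gamma)]$.
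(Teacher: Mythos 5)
Your argument is correct, and for parts (i) and (ii) it coincides with the paper's proof: both reduce the ideal inclusions to the single abelian statement $n\,C_{0,s(\gamma)}(\mathcal{G}^{(0)})\,n^*\subset C_{0,r(\gamma)}(\mathcal{G}^{(0)})$, read off from \eqref{nfn*}, and then invoke Lemma \ref{central-norm-ideal} for the two towers $C_0(\mathcal{G}^{(0)})\subset\mathfrak{D}\subset\mathfrak{A}$. Where you diverge is in (iii) and (iv). The paper handles (iii) structurally, with no pointwise computation: the conditional expectation is normalized by elementary normalizers, $\mathbb{E}_{\text{red}}(nan^*)=n\mathbb{E}_{\text{red}}(a)n^*$ (equation \eqref{Enormalized}), which combined with \eqref{ad-n-C0} yields the kernel inclusion for $\ker p^{\mathbb{E}_{\text{red}}}_u$; and it proves (iv) by computing $\text{ad}_n\circ\text{ad}_{n^*}=\text{ad}_{nn^*}=$ multiplication by the central element $(nn^*)^2$, whose images under $\mathfrak{e}^{\dots}_{s(\gamma)}$ and $\mathfrak{e}^{\dots}_{r(\gamma)}$ act as units (this is where $n(\gamma)=1$ enters). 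You instead compute $(n\times g\times n^*)(\delta)=g(\gamma^{-1}\delta\gamma)$ from \eqref{n1fn2} and identify all induced maps, on the dense group algebras, with the conjugation isomorphism $\Theta_\gamma$; intrinsic invariance of the full and reduced group $C^*$-norms then gives (iii) and the mutual inverses in (iv), and your treatment of the non-intrinsic norm $\|\,.\,\|_{\text{other},u}$ --- existence and boundedness of $\mathfrak{a}^{\text{other}}_{\boldsymbol{\omega}}$ from Lemma \ref{central-norm-ideal}, multiplicativity by agreement with $\Theta_\gamma$ on a dense $*$-subalgebra --- is exactly the right fix for the one spot where ``extend $\Theta_\gamma$ by continuity'' would be unjustified. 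Your route is more computational but also more informative: it identifies each $\mathfrak{a}^{\bullet}_{\boldsymbol{\omega}}$ concretely as conjugation by $\gamma$, which is the form in which the lemma is actually exploited in the proof of Theorem \ref{simplicity-for-red} (where the fibre algebras at the points of $r(\mathcal{G}u_0)$ are asserted to be $*$-isomorphic to the one at $u_0$), and it shows as a byproduct that the norms $\|\,.\,\|_{\text{other},u}$ are covariant under conjugation; the paper's route is shorter and needs no convolution formula beyond \eqref{nfn*}.
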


\begin{proof}
\begin{nagyrevenv}
We prove statements (i) and (ii) simultaneously.
Let $\mathfrak{D}\subset\mathfrak{A}$ denote either one of the inclusions 
$C^*(\text{IntIso}(\mathcal{G}))\subset C^*(\mathcal{G})$, or
$C^*_{\text{red}}(\text{IntIso}(\mathcal{G}))\subset C^*_{\text{red}}(\mathcal{G})$.
The desired staments follows from Lemma \ref{central-norm-ideal} applied to the inclusions
$C_0(\mathcal{G}^{(0)})\subset \mathfrak{D}\subset\mathfrak{A}$, once we show that
%
\begin{equation}
\text{ad}_n\left(C_{0,s(\gamma)}(\mathcal{G}^{(0)})\right)\subset
C_{0,r(\gamma)}(\mathcal{G}^{(0)}).
\label{ad-n-C0}
\end{equation}
However, this statement is immediate from \eqref{nfn*} (see also Remark \ref{rem-nfn*}).

Statement (iii) now clearly follows from \eqref{ad-n-C0}, since by \eqref{Enormalized} we also know that that
$$\mathbb{E}_{\text{red}}(nan^*)=n\mathbb{E}_{\text{red}}(a)n^*,\,\,\,\forall\,
a\in C^*_{\text{red}}(\mathcal{G}).$$

Statement (iv) is also pretty clear, since (using the unified notation as above)
\begin{align*}
\text{ad}_n\circ \text{ad}_{n^*}&=\text{ad}_{nn^*}=\text{multiplication by the central element }(nn^*)^2\text{ on }\mathfrak{B}\text{, and}\\
\text{ad}_{n^*}\circ \text{ad}_{n}&=\text{ad}_{n^*n}=\text{multiplication by the central element }(n^*n)^2\text{ on }\mathfrak{B},
\end{align*}
and for any one of the $*$-homomorphisms $\mathfrak{e}^{\dots}_{s(\gamma)}$ and
$\mathfrak{e}^{\dots}_{r(\gamma)}$, the elements $\mathfrak{e}^{\dots}_{s(\gamma)}(n^*n)$
and $\mathfrak{e}^{\dots}_{r(\gamma)}(nn^*)$ act as the units in the unital $C^*$-algebras
$\mathfrak{e}^{\dots}_{s(\gamma)}(\mathfrak{B})$ and
$\mathfrak{e}^{\dots}_{r(\gamma)}(\mathfrak{B})$.
\end{nagyrevenv}
\end{proof}

\begin{mycomment}
According to \cite[Thm 3.1]{BNRSW}, {\em the inclusion
$C^*_{\text{red}}(\text{\rm IntIso}(\mathcal{G}))\subset C^*_{\text{red}}(\mathcal{G})$ is always essential}, so
by applying  Proposition \ref{reg-simple} and the other results from Section
\ref{min-sec} and we obtain the following simplicity criterion.
\end{mycomment}

\begin{proposition}\label{red-simple-prop}
The reduced $C^*$-algebra $C^*_{\text{\rm red}}(\mathcal{G})$ is simple, if and only if the inclusion
$C^*_{\text{\rm red}}(\text{\rm IntIso}(\mathcal{G}))\subset C^*_{\text{\rm red}}(\mathcal{G})$ is minimal; equivalently, the only ideals $J\unlhd C^*_{\text{\rm red}}(\text{\rm IntIso}(\mathcal{G}))$, satisfying
\begin{equation}
nJn^*\subset J, \,\,\,\forall\,n\in\mathfrak{N}(\mathcal{G}),
\label{red-simple-prop-eq}
\end{equation}
are the trivial ideals $J=\{0\}$ and $J=C^*_{\text{\rm red}}(\text{\rm IntIso}(\mathcal{G}))$.
\end{proposition}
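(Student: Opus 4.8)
The plan is to combine the general simplicity criterion of Proposition \ref{reg-simple} with the reformulation of minimality furnished by Lemma \ref{normalized-ideal}, applied throughout to the inclusion $B := C^*_{\text{red}}(\text{IntIso}(\mathcal{G})) \subset A := C^*_{\text{red}}(\mathcal{G})$. First I would record the three structural features of this inclusion that are needed. It is non-degenerate, since $C_0(\mathcal{G}^{(0)}) \subset B$ already contains an approximate unit for $A$; it is regular, by part B of the preceding Remark, because each elementary normalizer $n \in \mathfrak{N}(\mathcal{G})$ conjugates $B$ into itself while $\text{span}\,\mathfrak{N}(\mathcal{G}) = C_c(\mathcal{G})$ is dense in $A$, whence $A^{\text{reg}} = A$; and it is essential by \cite[Thm.~3.1]{BNRSW}. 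Feeding these three facts into Proposition \ref{reg-simple} yields at once that $A$ is simple if and only if $B \subset A$ is minimal. This settles the first asserted equivalence, and reduces everything to identifying minimality with condition \eqref{red-simple-prop-eq}.

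Next I would unwind the two relevant classes of ideals of $B$. Using the Remark following Definition \ref{def-minimal}, an ideal $J \unlhd B$ is fully normalized in $A$ relative to $B$ exactly when $\mathcal{N}_A(B) \subset \mathcal{N}_A(J)$. On the other hand, since $\mathfrak{N}(\mathcal{G})$ is closed under the adjoint (if $n \in C_c(\mathcal{B})$ then $n^* \in C_c(\mathcal{B}^{-1})$, with $\mathcal{B}^{-1}$ again an open bisection), condition \eqref{red-simple-prop-eq} is precisely the statement $\mathfrak{N}(\mathcal{G}) \subset \mathcal{N}_A(J)$. Because $\mathfrak{N}(\mathcal{G}) \subset \mathcal{N}_A(B)$, every fully normalized ideal satisfies \eqref{red-simple-prop-eq}; thus the class of ideals satisfying \eqref{red-simple-prop-eq} contains the class of fully normalized ideals. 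Consequently, if only the trivial ideals satisfy \eqref{red-simple-prop-eq}, then \emph{a fortiori} only the trivial ideals are fully normalized, which is minimality.

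For the reverse implication --- the crux of the argument --- I would argue by contradiction using Lemma \ref{normalized-ideal}. Assuming $B \subset A$ minimal, suppose some nontrivial ideal $J \unlhd B$ (thus $\{0\} \neq J \neq B$) satisfies \eqref{red-simple-prop-eq}. Then $\mathfrak{N}(\mathcal{G}) \subset \mathcal{N}_A(B) \cap \mathcal{N}_A(J)$, and since $\mathfrak{N}(\mathcal{G})$ generates $A = A^{\text{reg}}$ as a $C^*$-algebra, the ideal $J$ satisfies condition {\sc (sn)} of Lemma \ref{normalized-ideal}. That lemma then places $J$ inside a \emph{proper} ideal $J' \lhd B$ which is fully normalized in $A$ relative to $B$; as $J' \supset J \neq \{0\}$ and $J' \neq B$, this $J'$ is a nontrivial fully normalized ideal, contradicting minimality. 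Hence only the trivial ideals of $B$ can satisfy \eqref{red-simple-prop-eq}, completing the equivalence.

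The only genuinely substantive step is this last one, and its weight is carried entirely by Lemma \ref{normalized-ideal}: the passage from ``$J$ is preserved by every elementary normalizer'' to ``$J$ lies in a fully normalized ideal'' is exactly where one must leave the realm of linear-span arguments, since conjugation $n \mapsto nJn^*$ is not linear in $n$ and preservation therefore cannot simply be propagated from $\mathfrak{N}(\mathcal{G})$ to all of $\mathcal{N}_A(B)$ by density. Once the hypothesis {\sc (sn)} is checked --- namely that $\mathfrak{N}(\mathcal{G})$ sits inside $\mathcal{N}_A(B) \cap \mathcal{N}_A(J)$ and generates $A^{\text{reg}}$ --- the lemma supplies precisely the fully normalized overideal needed to close the contradiction.
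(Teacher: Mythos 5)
Your proposal is correct and follows exactly the route the paper itself takes: essentiality from \cite[Thm.~3.1]{BNRSW}, regularity via the elementary normalizers $\mathfrak{N}(\mathcal{G})$, Proposition \ref{reg-simple} to equate simplicity with minimality, and Lemma \ref{normalized-ideal} (as in Remark \ref{rem-nfn*}) to pass between condition \eqref{red-simple-prop-eq} and full normalization. Your write-up merely makes explicit the steps the paper compresses into the Comment preceding the proposition, and all details (adjoint-closure of $\mathfrak{N}(\mathcal{G})$, verification of condition {\sc (sn)}, properness of the overideal) check out.
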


We are now in position to formulate one of the main results in this paper.

\begin{theorem}\label{simplicity-for-red}
Assume there is some unit $u_0\in\mathcal{G}^{(0)}_{\text{\rm r-cont}}$, for which
the (discrete countable) group $\text{\rm IntIso}(\mathcal{G})u_0$ is $C^*$-simple. Then the following are equivalent:
\begin{itemize}
\item[(i)] $\mathcal{G}$ is minimal;
\item[(ii)] $C^*_{\text{\rm red}}(\mathcal{G})$ is simple.
\end{itemize}
\end{theorem}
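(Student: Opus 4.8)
The plan is to prove the two implications separately, in both cases routing through Proposition \ref{red-simple-prop}, which converts simplicity of $C^*_{\text{red}}(\mathcal{G})$ into the statement that the only $\mathfrak{N}(\mathcal{G})$-invariant ideals of $B:=C^*_{\text{red}}(\text{IntIso}(\mathcal{G}))$ are the trivial ones. For the easy direction (ii)$\Rightarrow$(i), I would apply Proposition \ref{reg-simple} to the regular non-degenerate central inclusion $C_0(\mathcal{G}^{(0)})\subset C^*_{\text{red}}(\mathcal{G})$: simplicity forces this inclusion to be minimal, which by the equivalence in Remark \ref{rem-nfn*} is precisely minimality of $\mathcal{G}$.

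The substance is (i)$\Rightarrow$(ii). Assuming $\mathcal{G}$ minimal, let $J\neq\{0\}$ be an $\mathfrak{N}(\mathcal{G})$-invariant ideal of $B$; I must show $J=B$. First I would spread the support of $J$ across the unit space. Writing $p^{\mathbb{E}_{\text{red}}}_u(a)=\|\mathfrak{e}^{\text{red}}_u(a)\|$, consider $V=\{u\in\mathcal{G}^{(0)}:\mathfrak{e}^{\text{red}}_u(J)\neq\{0\}\}$. This set is open by lower semicontinuity of the seminorms (Lemma \ref{lsc}); it is invariant because invariance of $J$ together with the isomorphisms $\mathfrak{a}^{\text{red}}_{\boldsymbol{\omega}}$ of Lemma \ref{lemma-eu}(iii)--(iv) forces $\mathfrak{e}^{\text{red}}_{s(\gamma)}(J)\neq\{0\}\Rightarrow\mathfrak{e}^{\text{red}}_{r(\gamma)}(J)\neq\{0\}$; and it is non-empty since faithfulness of $\mathbb{E}_{\text{red}}$ gives $\|a\|=\sup_u p^{\mathbb{E}_{\text{red}}}_u(a)$ via \eqref{norm-sup-pE}. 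Minimality (Remark \ref{rem-nfn*}(iii$'$)) then yields $V=\mathcal{G}^{(0)}$; in particular $\mathfrak{e}^{\text{red}}_{u_0}(J)\neq\{0\}$, and since $\text{IntIso}(\mathcal{G})u_0$ is $C^*$-simple, $\mathfrak{e}^{\text{red}}_{u_0}(J)$ is the whole unital fiber $C^*_{\text{red}}(\text{IntIso}(\mathcal{G})u_0)$.

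The crux is to manufacture from this a nonzero element of $C_0(\mathcal{G}^{(0)})\cap J$. Pick $a\in J^+$ with $\mathfrak{e}^{\text{red}}_{u_0}(a)=1$; since $u_0\in\mathcal{G}^{(0)}_{\text{r-cont}}$ the map $\boldsymbol{\kappa}_{u_0}$ is an isomorphism, so also $\mathfrak{e}^{\text{other}}_{u_0}(a)=1$. Now for $g\in C_c(\mathcal{G}^{(0)})^+$ with $g(u_0)=1$ the element $g^2-gag\in B$ satisfies $p^{\text{other}}_{u_0}(g^2-gag)=0$, so by \emph{upper} semicontinuity of the uniform seminorms \eqref{p=usc} there is an open $V'\ni u_0$ on which $\|1-\mathfrak{e}^{\text{other}}_v(a)\|<\tfrac12$, whence $\mathfrak{e}^{\text{red}}_v(a)=\boldsymbol{\kappa}_v(\mathfrak{e}^{\text{other}}_v(a))\ge\tfrac12\,1$ for $v\in V'$. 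Taking $g_\delta(t)=t(t+\delta)^{-1}$ (so $g_\delta(a)\in J$) and a bump $f\in C_c(V')^+$ with $f(u_0)=1$, a fiberwise estimate gives $\|f-f^{1/2}g_\delta(a)f^{1/2}\|\le 2\delta$; letting $\delta\to0$ puts $f\in J\cap C_0(\mathcal{G}^{(0)})$. This intersection is an $\mathfrak{N}(\mathcal{G})$-invariant ideal of $C_0(\mathcal{G}^{(0)})$ (by the normalizer formula \eqref{nfn*}), now nonzero, so minimality (Remark \ref{rem-nfn*}(ii)) forces $J\cap C_0(\mathcal{G}^{(0)})=C_0(\mathcal{G}^{(0)})$; as the latter contains an approximate unit for $B$, we conclude $J=B$.

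I expect the third paragraph to be the main obstacle, namely the passage from ``the single fiber over $u_0$ is full'' to ``$J$ meets the central subalgebra $C_0(\mathcal{G}^{(0)})$.'' This is exactly the step where the hypothesis $u_0\in\mathcal{G}^{(0)}_{\text{r-cont}}$ is indispensable: continuous reduction identifies $\ker\mathfrak{e}^{\text{red}}_{u_0}$ with the uniform ideal $J^{\text{other}}_{u_0}$, which is what allows upper semicontinuity of $p^{\text{other}}$ to upgrade fullness at the point $u_0$ into invertibility of $\mathfrak{e}^{\text{other}}_v(a)$ on an entire neighborhood; functional calculus then converts that uniform lower bound into an honest central element of the ideal. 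Without continuous reduction one only controls $\ker p^{\mathbb{E}_{\text{red}}}_{u_0}$, whose generating set is not adapted to the semicontinuity estimates, and the neighborhood argument collapses.
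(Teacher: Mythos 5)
Your proposal is correct, and for the substantive implication (i)$\Rightarrow$(ii) it takes a genuinely different route from the paper. The paper uses minimality once, to make the orbit $\mathcal{X}=r(\mathcal{G}u_0)$ dense in $\mathcal{G}^{(0)}$, then transports continuous reduction and $C^*$-simplicity along that orbit via Lemma \ref{lemma-eu}, so that Remark \ref{G-isotropic-consequence}.C (that is, Theorem \ref{simple-essential-CE}, which rests on the relative-dominance and state-lifting machinery of Sections \ref{rel-dom-sec} and \ref{central-sec}) makes the central inclusion $C_0(\mathcal{G}^{(0)})\subset C^*_{\text{red}}(\text{IntIso}(\mathcal{G}))$ \emph{essential}; a proper invariant ideal $J$ then has $J\cap C_0(\mathcal{G}^{(0)})$ proper and invariant, hence zero by minimality, hence $J=\{0\}$ by essentiality. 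You bypass essentiality altogether and show instead that a \emph{nonzero} invariant ideal is everything: lower semicontinuity (Lemma \ref{lsc}), Lemma \ref{lemma-eu} and minimality force the invariant open set $V$ to be all of $\mathcal{G}^{(0)}$, simplicity of the single fiber at $u_0$ gives $a\in J^+$ with $\mathfrak{e}^{\text{red}}_{u_0}(a)=1$, and then --- precisely where $u_0\in\mathcal{G}^{(0)}_{\text{r-cont}}$ is used, matching where the paper uses it through Remark \ref{G-isotropic-consequence}.C --- you switch to the seminorms $p^{\text{other}}$, whose upper semicontinuity \eqref{p=usc} spreads invertibility of the fibers of $a$ over a neighborhood, so functional calculus deposits a bump function in $J\cap C_0(\mathcal{G}^{(0)})$ and minimality (Remark \ref{rem-nfn*}) together with non-degeneracy yields $J=B$. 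In effect you re-derive, by a direct quantitative semicontinuity/functional-calculus estimate, exactly the fragment of the essentiality criterion that the theorem needs; this is more self-contained and makes the role of continuous reduction at $u_0$ completely explicit, while the paper's route buys modularity by quoting the prefabricated criterion (which it also reuses elsewhere). One small repair is needed in your neighborhood step: from $p^{\text{other}}_v(g^2-gag)<\tfrac12$ you can only conclude $g(v)^2\,\|1-\mathfrak{e}^{\text{other}}_v(a)\|<\tfrac12$, which controls $\|1-\mathfrak{e}^{\text{other}}_v(a)\|$ only where $g$ is bounded below; choosing $g\equiv 1$ on a neighborhood $W$ of $u_0$ and taking $V'\subset W$ fixes this, and the rest of your estimate $\|f-f^{1/2}g_\delta(a)f^{1/2}\|\leq 2\delta$ (via \eqref{norm-sup-pE} and the faithfulness of $\mathbb{E}_{\text{red}}$) goes through as written.
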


\begin{proof}
By Proposition \ref{reg-simple}, the implication $(ii)\Rightarrow (i)$ is trivial (and holds true without any restrictions).

$(i)\Rightarrow (ii)$. Assume $\mathcal{G}$ is minimal, and let us prove the simplicity of
$C^*_\text{red}(\mathcal{G})$.

With Proposition \ref{red-simple-prop} in mind, all we need to prove is that the inclusion 
$C^*_{\text{red}}(\text{IntIso}(\mathcal{G}))\subset C^*_{\text{red}}(\mathcal{G})$
is minimal.
Fix some ideal $J\lhd C^*_{\text{red}}(\text{IntIso}(\mathcal{G}))$ satisfying \eqref{red-simple-prop-eq}, and let us prove that $J=\{0\}$. By the minimality assumption on $\mathcal{G}$ (cf. Remark \ref{rem-nfn*}), the set
$\mathcal{X}=r(\mathcal{G}u_0)$ is dense in $\mathcal{G}^{(0)}$. By Lemma \ref{lemma-eu}, it follows that
$\mathcal{X}\subset\mathcal{G}^{(0)}_{\text{r-cont}}$, and furthermore,
all groups $\text{IntIso}(\mathcal{G})u$, $u\in\mathcal{X}$, (whose reduced $C^*$-algebras are $*$-isomorphic
to $C^*_{\text{red}}(\text{IntIso}(\mathcal{G})u_0)$) are $C^*$-simple. 
By Remark \ref{G-isotropic-consequence}.C, it follows that the inclusion
$C_0(\mathcal{G}^{(0)})\subset C^*_{\text{red}}(\text{IntIso}(\mathcal{G}))$ is essential, so in order to prove that
$J=\{0\}$ it suffices to prove that the ideal $J_0=J\cap C_0(\mathcal{G}^{(0)})\unlhd C_0(\mathcal{G}^{(0)})$ is the zero ideal $\{0\}$. However, this follows immediately from the minimality of $\mathcal{G}$, since we clearly have $nJ_0n^*\subset J_0$, $\forall\,n\in\mathfrak{N}(\mathcal{G})$.
\end{proof}

\begin{mycomment}
At this point we are unable to provide any converse statement of Theorem \ref{simplicity-for-red} (i.e. the fact that its hypothesis is also {\em necessary\/} condition for the simplicity of $C^*_\text{red}(\mathcal{G})$).  However, based on Proposition \ref{aug-ideal}, the following
{\em non-simplicity\/} test is available.
\end{mycomment}

\begin{proposition}\label{red-simple-converse-other}
If $C^*_{\text{\rm red}}(\mathcal{\mathcal{G}})$ is simple, and there exists a unit $u_0\in\mathcal{G}^{(0)}$, such that the augmentation
homomorphism $\mathbb{C}[\text{\rm IntIso}(\mathcal{G}u_0)]\to\mathbb{C}$ is continuous relative to the norm
$\|\,.\,\|_{\text{\rm other},u_0}$, then 
$\text{\rm IntIso}(\mathcal{G})u=\{e\}$, $\forall\,u\in\mathcal{G}^{(0)}$.
\end{proposition}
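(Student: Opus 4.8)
The plan is to derive the conclusion from Proposition \ref{red-simple-prop} together with Proposition \ref{aug-ideal}, by exhibiting the augmentation ideal of the interior isotropy as a forbidden invariant ideal. Assuming $C^*_{\text{red}}(\mathcal{G})$ is simple, Proposition \ref{red-simple-prop} tells us that the regular inclusion $C^*_{\text{red}}(\text{IntIso}(\mathcal{G}))\subset C^*_{\text{red}}(\mathcal{G})$ is minimal, i.e. the only ideals $J\unlhd C^*_{\text{red}}(\text{IntIso}(\mathcal{G}))$ satisfying the invariance condition \eqref{red-simple-prop-eq} are $\{0\}$ and the whole algebra. I would apply Proposition \ref{aug-ideal} to the \'etale group bundle $\text{IntIso}(\mathcal{G})$: the stated hypothesis is exactly the hypothesis of part (iii), so $\mathfrak{V}_{\text{red}}(\text{IntIso}(\mathcal{G}))\subsetneq C^*_{\text{red}}(\text{IntIso}(\mathcal{G}))$ is a \emph{proper} ideal. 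Once I show this ideal also satisfies \eqref{red-simple-prop-eq}, minimality forces $\mathfrak{V}_{\text{red}}(\text{IntIso}(\mathcal{G}))=\{0\}$, and then Proposition \ref{aug-ideal}(i) gives precisely the desired conclusion $\text{IntIso}(\mathcal{G})u=\{e\}$ for every $u\in\mathcal{G}^{(0)}$.

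The crux, and the step I expect to be the main obstacle, is verifying that $\mathfrak{V}_{\text{red}}(\text{IntIso}(\mathcal{G}))$ satisfies \eqref{red-simple-prop-eq}, i.e. that $n\times\mathfrak{V}_{\text{red}}(\text{IntIso}(\mathcal{G}))\times n^*\subset\mathfrak{V}_{\text{red}}(\text{IntIso}(\mathcal{G}))$ for every $n\in\mathfrak{N}(\mathcal{G})$. Since each conjugation $\text{ad}_n\colon a\mapsto n\times a\times n^*$ is norm bounded (by $\|n\|^2$) and carries $C^*_{\text{red}}(\text{IntIso}(\mathcal{G}))$ into itself (conjugation by a bisection function preserves $\text{IntIso}(\mathcal{G})$), by continuity it suffices to prove the invariance at the dense algebraic level $\text{ad}_n(\mathfrak{V}_c(\text{IntIso}(\mathcal{G})))\subset\mathfrak{V}_c(\text{IntIso}(\mathcal{G}))$, for $n\in C_c(\mathcal{B})$ with $\mathcal{B}$ an open bisection of $\mathcal{G}$. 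To handle the algebraic ideal I would introduce the fiberwise \emph{augmentation homomorphism}
\[
\text{Aug}\colon C_c(\text{IntIso}(\mathcal{G}))\to C_c(\mathcal{G}^{(0)}),\qquad
\text{Aug}(F)(u)=\sum_{\gamma\in\text{IntIso}(\mathcal{G})u}F(\gamma),
\]
which restricts on each fiber to the classical group-algebra augmentation $\mathbb{C}[\text{IntIso}(\mathcal{G})u]\to\mathbb{C}$; a direct check on the convolution and involution shows $\text{Aug}$ is a genuine $*$-homomorphism onto $C_c(\mathcal{G}^{(0)})$ with its pointwise operations.

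The two facts I would establish are: (a) $\mathfrak{V}_c(\text{IntIso}(\mathcal{G}))=\ker(\text{Aug})$, and (b) the intertwining identity $\text{Aug}(n\times F\times n^*)=n\times\text{Aug}(F)\times n^*$, where the right-hand side uses the $C_c(\mathcal{G}^{(0)})$-conjugation formula \eqref{nfn*}. For (a), the inclusion $\mathfrak{V}_c\subset\ker\text{Aug}$ is immediate, since a generator $f-\mathfrak{t}_{\mathcal{D}}(f)$ has equal augmentation in both terms (the bisection $\mathcal{D}$ of $\text{IntIso}(\mathcal{G})$ meets each fiber once); conversely, given $F\in\ker\text{Aug}$, a partition of unity subordinate to finitely many bisections $\mathcal{D}_k$ covering $\text{supp}\,F$ writes $F=\sum_k F_k$ with $F_k\in C_c(\mathcal{D}_k)$, so $F\equiv\sum_k\mathfrak{t}_{\mathcal{D}_k}(F_k)=:h\in C_c(\mathcal{G}^{(0)})\pmod{\mathfrak{V}_c}$, and applying $\text{Aug}$ yields $h=\text{Aug}(h)=\text{Aug}(F)=0$, whence $F\in\mathfrak{V}_c$. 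For (b), fix $v\in r(\mathcal{B})$ and set $\beta=(r|_{\mathcal{B}})^{-1}(v)$, $w=s(\beta)$; by \eqref{n1fn2} one has $(n\times F\times n^*)(\gamma)=|n(\beta)|^2 F(\beta^{-1}\gamma\beta)$ for $\gamma\in\text{IntIso}(\mathcal{G})v$, and since $\gamma\mapsto\beta^{-1}\gamma\beta$ is a group isomorphism $\text{IntIso}(\mathcal{G})v\to\text{IntIso}(\mathcal{G})w$, summing over the fiber gives $\text{Aug}(n\times F\times n^*)(v)=|n(\beta)|^2\,\text{Aug}(F)(w)$, which is exactly $(n\times\text{Aug}(F)\times n^*)(v)$ by \eqref{nfn*} (both sides vanish when $v\notin r(\mathcal{B})$).

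Combining (a) and (b) closes the argument: for $F\in\mathfrak{V}_c=\ker\text{Aug}$ we get $\text{Aug}(n\times F\times n^*)=n\times\text{Aug}(F)\times n^*=0$, and since $n\times F\times n^*\in C_c(\text{IntIso}(\mathcal{G}))$, we conclude $n\times F\times n^*\in\ker\text{Aug}=\mathfrak{V}_c$, as required; closing up under the norm then yields \eqref{red-simple-prop-eq} for $\mathfrak{V}_{\text{red}}(\text{IntIso}(\mathcal{G}))$. I expect the bookkeeping in (b) --- tracking how \eqref{n1fn2} reshuffles the fiber and invoking $\beta$-conjugation as a group isomorphism so that the fiber sum is preserved --- to be the most delicate part; note in particular that the whole argument stays at the $C_c$ level, so that I never need $\text{Aug}$ to be bounded for the full reduced norm (the continuity at the single unit $u_0$ enters only through Proposition \ref{aug-ideal}(iii)). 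Everything else is assembled directly from Propositions \ref{red-simple-prop} and \ref{aug-ideal}.
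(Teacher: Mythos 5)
Your proposal is correct and follows exactly the paper's route: properness of $\mathfrak{V}_{\text{red}}(\text{IntIso}(\mathcal{G}))$ via Proposition \ref{aug-ideal}(iii), invariance under all $n\in\mathfrak{N}(\mathcal{G})$, minimality from Proposition \ref{red-simple-prop} forcing the ideal to vanish, and then Proposition \ref{aug-ideal}(i) to conclude. The only difference is that the paper dismisses the invariance $n\mathfrak{V}_{\text{red}}(\text{IntIso}(\mathcal{G}))n^*\subset\mathfrak{V}_{\text{red}}(\text{IntIso}(\mathcal{G}))$ with the word ``clearly,'' whereas you supply a correct and complete verification of it (the fiberwise augmentation homomorphism, the identification $\mathfrak{V}_c=\ker(\text{Aug})$, and the intertwining identity via \eqref{n1fn2} and \eqref{nfn*}), which is a legitimate expansion rather than a departure.
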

\begin{proof}
First, by the assumption on $u_0$ and by Proposition \ref{aug-ideal}, we know that the augmentation ideal
$\mathfrak{V}_{\text{red}}(\text{IntIso}(\mathcal{G}))$ is a proper ideal in 
$C^*_{\text{red}}(\text{IntIso}(\mathcal{G}))$.
Secondly, since we clearly have
$$n\mathfrak{V}_{\text{red}}(\text{IntIso}(\mathcal{G}))n^*\subset
\mathfrak{V}_{\text{red}}(\text{IntIso}(\mathcal{G})),\,\,\,\forall\,n\in\mathfrak{N}(\mathcal{G}),$$ 
by Proposition \ref{red-simple-prop}, the simplicity of $C^*_{\text{red}}(\mathcal{G})$ forces
$\mathfrak{V}_{\text{red}}(\text{IntIso}(\mathcal{G}))=\{0\}$, which in turn by Proposition \ref{aug-ideal} forces
$\mathcal{G}^{(0)}=\text{IntIso}(\mathcal{G})$.
\end{proof}

\begin{corollary}\label{red-simple-converse-amenable}
If $C^*_{\text{\rm red}}(\mathcal{G})$ is simple, and there is a unit $u_0\in\mathcal{G}^{(0)}$, such that 
$\text{\rm IntIso}(\mathcal{G})u_0$ is amenable, then
$\text{\rm IntIso}(\mathcal{G})u=\{e\}$, $\forall\,u\in\mathcal{G}^{(0)}$.
\end{corollary}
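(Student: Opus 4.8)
The plan is to deduce this statement directly from Proposition \ref{red-simple-converse-other}, by checking that amenability of $\text{IntIso}(\mathcal{G})u_0$ forces the hypothesis of that Proposition. Concretely, writing $\varepsilon:\mathbb{C}[\text{IntIso}(\mathcal{G})u_0]\to\mathbb{C}$ for the augmentation homomorphism, I would verify that $\varepsilon$ is continuous relative to the norm $\|\,.\,\|_{\text{other},u_0}$, and then quote Proposition \ref{red-simple-converse-other} verbatim.

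First I would record that $\varepsilon$ is the integrated form of the trivial one-dimensional unitary representation of the group $\text{IntIso}(\mathcal{G})u_0$, and hence always extends to a character on the full group $C^*$-algebra $C^*(\text{IntIso}(\mathcal{G})u_0)$. In particular it is continuous with respect to the full norm, i.e.
\[
|\varepsilon(x)|\leq\|x\|_{\text{full}},\qquad\forall\,x\in\mathbb{C}[\text{IntIso}(\mathcal{G})u_0].
\]

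Next I would invoke Remark \ref{G-isotropic-consequence}.B, applied to the \'etale group bundle $\text{IntIso}(\mathcal{G})$: since $\text{IntIso}(\mathcal{G})u_0$ is amenable, the three norms $\|\,.\,\|_{\text{full}}$, $\|\,.\,\|_{\text{other},u_0}$ and $\|\,.\,\|_{\text{red}}$ all coincide on $\mathbb{C}[\text{IntIso}(\mathcal{G})u_0]$. Combining this coincidence with the bound above gives $|\varepsilon(x)|\leq\|x\|_{\text{other},u_0}$ for every $x$, which is precisely the continuity of the augmentation homomorphism relative to $\|\,.\,\|_{\text{other},u_0}$ demanded by Proposition \ref{red-simple-converse-other}. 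That Proposition then yields $\text{IntIso}(\mathcal{G})u=\{e\}$ for all $u\in\mathcal{G}^{(0)}$, completing the argument.

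Since nearly all of the substance is already contained in Proposition \ref{red-simple-converse-other} and in the standard equality of group $C^*$-norms for amenable groups, I do not anticipate any serious obstacle. The only point requiring care is the direction of the chain $\|\,.\,\|_{\text{full}}\geq\|\,.\,\|_{\text{other},u_0}\geq\|\,.\,\|_{\text{red}}$: the hypothesis of the Proposition asks for continuity in the \emph{smaller} norm $\|\,.\,\|_{\text{other},u_0}$, and it is exactly amenability that collapses the gap between $\|\,.\,\|_{\text{full}}$ and $\|\,.\,\|_{\text{other},u_0}$, thereby upgrading boundedness of $\varepsilon$ in the full norm to boundedness in $\|\,.\,\|_{\text{other},u_0}$.
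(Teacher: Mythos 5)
Your proposal is correct and follows essentially the same route the paper intends: the corollary is meant as an immediate consequence of Proposition \ref{red-simple-converse-other}, using precisely the observation (recorded in Remark \ref{G-isotropic-consequence}.B and in the parenthetical note after Proposition \ref{aug-ideal}) that amenability of $\text{IntIso}(\mathcal{G})u_0$ forces the norms $\|\,.\,\|_{\text{full}}$, $\|\,.\,\|_{\text{other},u_0}$ and $\|\,.\,\|_{\text{red}}$ to coincide on $\mathbb{C}[\text{IntIso}(\mathcal{G})u_0]$, so that the full-norm boundedness of the augmentation homomorphism upgrades to $\|\,.\,\|_{\text{other},u_0}$-continuity. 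Your closing remark about the direction of the chain of norm inequalities is exactly the right point of care, and there are no gaps.
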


\begin{remark}\label{proper-amenable}
If $u_0\in \mathcal{G}^{(0)}$ satisfying the hypothesis of Proposition \ref{red-simple-converse-other}
is a \nagyrev{\em unit of continuous reduction relative to $\mathbb{E}_\text{\rm red}$}, then $\text{IntIso}(\mathcal{G})u_0$ is necessarily amenable. This is due to the well known fact
the augmentation homomorphism on a group algebra $\mathbb{C}[H]$ is $\|\,.\,\|_{\text{red}}$-continuous, if and only if $H$ is amenable.
\end{remark}

Our methodology also applies to full groupoid $C^*$-algebras,  for which we recover the following result from
\cite{BCFS}. 

\begin{corollary}{(cf. \cite[Thm. 5.1]{BCFS})}\label{full-simple-thm}
The full $C^*$-algebra $C^*(\mathcal{G})$ is simple, if and only if all three conditions below are satisfied
\begin{itemize}
\item[(i)] $\mathcal{G}$ is minimal;
\item[(ii)] $\pi_{\text{\rm red}}:C^*(\mathcal{G})\to C^*_{\text{\rm red}}(\mathcal{G})$ is an isomorphism;
\item[(iii)] $\text{\rm IntIso}(\mathcal{G})u=\{e\}$, $\forall\,u\in\mathcal{G}^{(0)}$.
\end{itemize}
\end{corollary}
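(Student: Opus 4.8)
The plan is to prove the two implications separately, extracting the three necessary conditions one at a time in the ``only if'' direction, and reducing everything to the reduced picture (via condition (ii)) in the ``if'' direction. Two of the three necessary conditions come almost for free. Assuming $C^*(\mathcal{G})$ is simple, condition (ii) is immediate: $\pi_{\text{red}}$ is a surjection onto the nonzero algebra $C^*_{\text{red}}(\mathcal{G})$, so $\ker\pi_{\text{red}}=K_{\mathbb{E}}$ is a proper ideal, which simplicity forces to be $\{0\}$, making $\pi_{\text{red}}$ an isomorphism. Condition (i) follows because $C_0(\mathcal{G}^{(0)})\subset C^*(\mathcal{G})$ is a regular non-degenerate inclusion, so Proposition \ref{reg-simple} renders it minimal, which by Remark \ref{rem-nfn*} is exactly minimality of $\mathcal{G}$.

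The delicate point is the necessity of (iii), and this is where I would invest most of the argument. First I would use (ii) to transport the isomorphism down to the interior isotropy: in the commuting square relating $\mathfrak{i}_{\text{full}}$, $\mathfrak{i}_{\text{red}}$, $\pi_{\text{red}}$, and $\pi^{\text{IntIso}(\mathcal{G})}_{\text{red}}$ (whose two horizontal inclusions are injective), the identity $\pi_{\text{red}}\circ\mathfrak{i}_{\text{full}}=\mathfrak{i}_{\text{red}}\circ\pi^{\text{IntIso}(\mathcal{G})}_{\text{red}}$ holds on the dense subalgebra $C_c(\text{IntIso}(\mathcal{G}))$ and hence everywhere, so injectivity of $\pi_{\text{red}}$ forces injectivity, hence bijectivity, of $\pi^{\text{IntIso}(\mathcal{G})}_{\text{red}}$. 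Since $J^{\text{other}}_u=\pi^{\text{IntIso}(\mathcal{G})}_{\text{red}}(J^{\text{full}}_u)$, this isomorphism descends through diagram \eqref{iso-e-maps} to isomorphisms $\boldsymbol{\kappa}_u^{-1}$-free identifications $\boldsymbol{\rho}_u$ for every $u$; equivalently $\|\,.\,\|_{\text{full}}=\|\,.\,\|_{\text{other},u}$ on $\mathbb{C}[\text{IntIso}(\mathcal{G})u]$. Consequently the augmentation homomorphism, which is always $\|\,.\,\|_{\text{full}}$-continuous (being the trivial representation), is $\|\,.\,\|_{\text{other},u}$-continuous for every $u$. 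Since $C^*_{\text{red}}(\mathcal{G})\cong C^*(\mathcal{G})$ is simple, Proposition \ref{red-simple-converse-other} then yields $\text{IntIso}(\mathcal{G})u=\{e\}$ for all $u$, which is (iii).

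For sufficiency I would assume (i), (ii), (iii). By (ii) it suffices to show $C^*_{\text{red}}(\mathcal{G})$ is simple. Condition (iii) means $\text{IntIso}(\mathcal{G})=\mathcal{G}^{(0)}$, so the subalgebra collapses to $C^*_{\text{red}}(\text{IntIso}(\mathcal{G}))=C_0(\mathcal{G}^{(0)})$. Proposition \ref{red-simple-prop} then reduces simplicity of $C^*_{\text{red}}(\mathcal{G})$ to minimality of the inclusion $C_0(\mathcal{G}^{(0)})\subset C^*_{\text{red}}(\mathcal{G})$, which by Remark \ref{rem-nfn*} is precisely minimality of $\mathcal{G}$; this is guaranteed by (i). Hence $C^*_{\text{red}}(\mathcal{G})$, and therefore $C^*(\mathcal{G})$, is simple.

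The main obstacle is the necessity of (iii): it is exactly the step where the full case departs from the reduced one, and the crux is recognizing that the hypothesis of Proposition \ref{red-simple-converse-other} -- continuity of the augmentation in the \emph{intermediate} norm $\|\,.\,\|_{\text{other},u}$ rather than the reduced norm -- is delivered automatically once (ii) forces the fiberwise equality $\|\,.\,\|_{\text{full}}=\|\,.\,\|_{\text{other},u}$. The routine verifications (commutativity of the relevant square on $C_c$, and that the induced fiber map is indeed $\boldsymbol{\rho}_u$) are the only calculations, and they are straightforward given the identifications already established in Section \ref{bundles-sec}.
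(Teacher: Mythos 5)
Your proof is correct; the sufficiency direction and the necessity of (i) and (ii) coincide with the paper's (which simply declares them obvious), but for the necessity of (iii) you take a genuinely different route. The paper never leaves the full picture: since $C^*(\mathcal{G})$ is simple and the inclusion $C^*(\text{IntIso}(\mathcal{G}))\subset C^*(\mathcal{G})$ is regular, Proposition \ref{reg-simple} makes that inclusion minimal, and then the full augmentation ideal $\mathfrak{V}_{\text{full}}(\text{IntIso}(\mathcal{G}))$ --- which by Proposition \ref{aug-ideal}(ii) is \emph{unconditionally} proper, and is normalized by every $n\in\mathfrak{N}(\mathcal{G})$ --- is forced to be $\{0\}$, whence $\text{IntIso}(\mathcal{G})=\mathcal{G}^{(0)}$ by Proposition \ref{aug-ideal}(i). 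You instead use (ii) first: the commuting square forces $\pi^{\text{IntIso}(\mathcal{G})}_{\text{red}}$ to be an isomorphism, hence (through the diagram \eqref{iso-e-maps}) each $\boldsymbol{\rho}_u$ is an isomorphism and $\|\,.\,\|_{\text{full}}=\|\,.\,\|_{\text{other},u}$ on every fiber $\mathbb{C}[\text{IntIso}(\mathcal{G})u]$, so the augmentation homomorphism (always full-continuous, being the trivial representation) is $\|\,.\,\|_{\text{other},u}$-continuous, and Proposition \ref{red-simple-converse-other} applies to the simple algebra $C^*_{\text{red}}(\mathcal{G})\cong C^*(\mathcal{G})$. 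Both arguments run on the same engine --- properness of an augmentation ideal plus minimality of the interior-isotropy inclusion --- but the paper's version is shorter and proves a slightly stronger standalone fact, namely that simplicity of $C^*(\mathcal{G})$ \emph{by itself} implies (iii), with no detour through (ii); it uses parts (i)--(ii) of Proposition \ref{aug-ideal} where you use parts (i) and (iii). Your detour is logically harmless, since (ii) is necessary anyway, and it buys an observation the paper does not record at this point: injectivity of $\pi_{\text{red}}$ forces the intermediate norms $\|\,.\,\|_{\text{other},u}$ to agree with $\|\,.\,\|_{\text{full}}$ on all isotropy fibers. Your unpacking of the sufficiency direction (collapsing $C^*_{\text{red}}(\text{IntIso}(\mathcal{G}))$ to $C_0(\mathcal{G}^{(0)})$ under (iii) and then citing Proposition \ref{red-simple-prop} together with Remark \ref{rem-nfn*}) is a clean way to make precise what the paper leaves implicit.
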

\begin{proof}
The implication ``(i) and (ii) and (iii)'' $\Rightarrow$ ``$C^*(\mathcal{G})$ simple'' is obvious.

Conversely, the implication ``$C^*(\mathcal{G})$ simple'' $\Rightarrow$ ``(i) and (ii)'' is also obvious, so we only need to justify the implication ``$C^*(\mathcal{G})$ simple'' $\Rightarrow$ ``(iii).''
To this end, we argue again as above, by observing that, if $C^*(\mathcal{G})$ is simple, then the inclusion 
$C^*(\text{IntIso)}(\mathcal{G})\subset C^*(\mathcal{G})$ is minimal, so
the full augmentation (proper!) ideal $\mathfrak{V}_{\text{full}}(\text{IntIso}(\mathcal{G}))
\lhd C^*(\text{IntIso}(\mathcal{G}))$, which also satisfies
$$n\mathfrak{V}_{\text{full}}(\text{IntIso}(\mathcal{G}))n^*\subset
\mathfrak{V}_{\text{full}}(\text{IntIso}(\mathcal{G})),\,\,\,\forall\,n\in\mathfrak{N}(\mathcal{G}),$$ 
must be the zero ideal $\mathfrak{V}_{\text{full}}(\text{IntIso}(\mathcal{G}))=\{0\}$, which again implies
$\mathcal{G}^{(0)}=\text{IntIso}(\mathcal{G})$.
\end{proof}

\begin{mycomment}
By the density of $\mathcal{G}^{(0)}_\circ$ in $\mathcal{G}^{(0)}$, the condition
\begin{equation}
\text{IntIso}(\mathcal{G})u=\{e\},\,\,\,\forall\,u\in \mathcal{G}^{(0)},
\end{equation}
that appears in Proposition \ref{red-simple-converse-other}, as well as in
Corollary \ref{red-simple-converse-amenable} and Theorem \ref{full-simple-thm},
is equivalent to the condition  that the groupoid $\mathcal{G}$ is {\em topologically principal}, in the sense that
the set 
$$\left\{u\in\mathcal{G}^{(0)}\,:\,\text{Iso}(\mathcal{G})u=\{e\}\right\}$$ 
is {\em dense\/}
in $\mathcal{G}^{(0)}$.

Of course, if some unit $u_0\in \mathcal{G}^{(0)}$ has trivial isotropy $\text{Iso}(\mathcal{G})u_0=\{e\}$, then
it also satisfies the hypothesis of Theorem \ref{simplicity-for-red}, since for any unit $u$,
$\text{IntIso}(\mathcal{G})u$ is a 
normal subgroup of $\text{Iso}(\mathcal{G})u$.
\end{mycomment}

\section{Applications to \'etale transformation groupoids}\label{cross-prod-sec}

The results from the preceding section specialize nicely to {\em transformation groupoids}, which are \'etale groupoids constructed as follows. One starts with a (traditional) dynamical system 
$G\curvearrowright Q$, which consists of a countable discrete group $G$ acting by homeomorphisms on a second countable  locally compact Hausdorff space $Q$. (For any $g\in G$, the corresponding homeomorphism of $Q$ will be simply denoted by $q\longmapsto gq$.) Out of this data, one equips the product space $\mathcal{G}=G\times Q$ with the product topology and the groupoid structure obtained by defining the space of composable pairs to be
$$\mathcal{G}^{(2)}=\left\{\big((g_1,q_1),(g_2,q_2)\big)\,:\,g_1,g_2\in G,\,q_1,q_2\in Q,\,q_1=g_2q_2\right\},$$ and the composition operation defined by $(g_1,q_1)\cdot (g_2,q_2)=(g_1g_2,q_2)$. The inverse operation is $(g,q)\longmapsto (g^{-1},gq)$, while the unit space $\mathcal{G}^{(0)}=\{e\}\times Q$ is of course identified with $Q$. By this identification, the source and range maps $s,r:G\times Q\to Q$ are simply defined as $s(g,q)=q$ and $r(g,q)=gq$.
For any element $\gamma=(g,q)\in\mathcal{G}$, the set $\{g\}\times Q$ is obviously an open bisection containing $\gamma$.

The full and the reduced $C^*$ algebras of the \'etale transformation groupoid $\mathcal{G}$ are identified with the full and the reduced {\em crossed product $C^*$-algebras\/}
$C_0(Q)\rtimes G$ and $C_0(Q)\rtimes_{\text{red}}G$, respectively, which are constructed as follows.
First of all, one considers the direct sum
$\bigoplus_{g\in G}C_0(Q)$, which consists of all $G$-tuples $(a_g)_{g\in G}$ of functions in $C_0(Q)$, with $a_g=0$, for all but finitely many $g\in G$, and we endow it with a $*$-algebra structure defined as follows
\begin{itemize}
\item $(a\times b)_g(q)=\sum_{\substack{g_1,g_2\in G\\ g_1g_2=g}}a_{g_1}(q)b_{g_2}(g_1^{-1}q)$,
$\forall\,(g,q)\in G\times Q,\,a=(a_g)_{g\in G},\,b=(b_g)_{g\in G}\in \bigoplus_{g\in G}C_0(Q)$;
\item $(a^*)_g(q)=\overline{a_{g^{-1}}(g^{-1}q)}$, $\forall\,(g,q)\in G\times Q, \,a=(a_g)_{g\in G}\in \bigoplus_{g\in G}C_0(Q)$.
\end{itemize}
In order to distinguish between these $*$-algebra operations and the usual operations in the direct sum $*$-algebra, we will denote this new $*$-algebra structure by $C_0(Q)[G]$. One defines the full $C^*$-norm on $C_0(Q)[G]$ by 
\begin{equation}
\|a\|_{\text{full}}=\sup\left\{\|\pi(a)\|\,:\,\pi\text{ non-degenerate $*$-representation of $C_0(Q)[G]$}\right\}.
\label{def-full-norm-cp}
\end{equation}
The above definition is slightly different than the standard one found in the literature, which involves the so-called {\em covariant representations\/} of the $C^*$-dynamical system $(C_0(Q),G,\lambda)$, where
$\lambda:G\to Aut(C_0(Q))$ is the action given by $(\lambda_gf)(q)=f(g^{-1}q)$, $g\in G$, $q\in Q$, $f\in C_0(Q)$.
However, it is fairly easy to show that,  for any non-degenerate $*$-representation
$\pi:C_0(Q)[G]\to\mathscr{B}(\mathscr{H})$ there exists a unique unitary representation
$G\ni g\longmapsto U_g^\pi\in\mathscr{U}(\mathscr{H})$, satisfying the identity
\begin{equation}
\pi(a)=\sum_{\substack{g\in G\\ a_g\neq 0}}(\pi\circ \chi_e)(a_g) U_g^\pi,
\,\,\,\forall\,a=(a_g)_{g\in G}\in C_0(Q)[G],\label{cov-rep}
\end{equation}
where $\chi_e$ is the $*$-homomorphism $C_0(Q)\ni f\longmapsto (\delta_{\nagyrev{e,g}}f)_{g\in G}\in C_0(Q)[G]$.

With this set-up in mind, the full crossed product $C^*$-algebra $C_0(Q)\rtimes G$ is the completion of
$C_0(Q)[G]$ in the $C^*$-norm $\|\,.\,\|_{\text{full}}$. 
The $*$-homomorphism $\chi_e$ gives rise to a non-degenerate inclusion 
$\chi_e:C_0(Q)\hookrightarrow C_0(Q)\rtimes G$; with the help of
\eqref{cov-rep} -- applied to the universal representation of $C_0(Q)[G]$, this gives rise to a group representation
$G\ni g\longmapsto \mathbf{u}_g\in\mathcal{U}(M(C_0(Q)\rtimes G))$ (the unitary group of the multiplier algebra), 
which allows us to present the inclusion $C_0(Q)[G]\subset C_0(Q)\rtimes G$ as
$$C_0(Q)[G]\ni a=(a_g)_{g\in G}\longmapsto 
\sum_{\substack{g\in G\\ a_g\neq 0}}\chi_e(a_g)\mathbf{u}_g\in C_0(Q)\rtimes G.$$
For simplicity, we'll agree to ignore $\chi_e$ from our notation (i.e. to replace $\chi_e(f)$ simply by $f$, thus viewing $C_0(Q)$ as a $C^*$-subalgebra in $C_0(Q)\rtimes G$), so we can simply view
\begin{equation}
C_0(Q)\rtimes G=\overline{\text{span}}\{f\mathbf{u}_g\,:\,f\in C_0(Q),\,g\in G\},
\label{cross-prod=span}
\end{equation}
subject to the product and adjoint rules:
\begin{align}
(f_1\mathbf{u}_{g_1})(f_2\mathbf{u}_{g_2})&=\left(f_1(\lambda_{g_1}f_2)\right)\mathbf{u}_{g_1g_2},
\,\,\,f_1,f_2\in C_0(Q),\,g_1,g_2\in G;\\
(f\mathbf{u}_g)^*&=(\lambda_{g^{-1}}f))\mathbf{u}_{g^{-1}},
\,\,\,f\in C_0(Q),\,g\in G;
\end{align}
(In \eqref{cross-prod=span}, the linear span -- without closure --  is $C_0(Q)[G]$.) 

Just as non-degenerate $*$-representations of $C_0(Q)[G]$ are in bijective correspondence to those of
$C_0(Q)\rtimes G$, the same can be said about non-degenerate {\em $*$-homomorphisms\/} 
(in the sense of Definition \ref{def-non-deg}; a $*$-homomorphism $\Psi:C_0(Q)[G]\to A$ is non-degenerate, if $\Psi\big|_{C_0(Q)}:C_0(Q)\to A$
is such.) In particular, using Remark \ref{MBsubMA-rem}, every non-degenerate $*$-homomorphism
$\Phi:C_0(Q)\rtimes G\to A$ yields a group homomorphism 
$G\ni g\longmapsto \mathbf{u}^\Phi_g=M\Phi(\mathbf{u}_g)
\in\mathcal{U}(M(A))$,
so that 
\begin{equation}
\Phi(f\mathbf{u}_g)=\Phi(f)\mathbf{u}^\Phi_g,\,\,\,
\forall\,f\in C_0(Q),\,g\in G.
\label{uPhi}
\end{equation}

Since $(C_0(Q)[G],\|\,.\,\|_{\text{full}})$ contains  as a dense $*$-subalgebra
$$C_c(Q)[G]=\left\{a=(a_g)_{g\in G}\in C_0(Q)[G],\,\,a_g\in C_c(Q),\,\,\forall\,g\in G\right\},$$
the non-degenerate $*$-representations of (or $*$-homomorphisms defined on) $C_0(Q)[G]$ are in a bijective correspondence (by restriction) to the non-degenerate $*$-representations of (or $*$-homomorphisms defined on) $C_c(Q)[G]$; in other words, we can also view the full crossed product $C_0(Q)\rtimes G$
as the completion of $C_c(Q)[G]$ with respect to the $C^*$-norm
\begin{equation}
\|a\|_{\text{full}}=\sup\left\{\|\pi(a)\|\,:\,\pi\text{ non-degenerate $*$-representation of $C_c(Q)[G]$}\right\}.
\label{def-full-norm-cp2}
\end{equation}
The point here is the fact that the $*$-algebra $C_c(Q)[G]$ is $*$-isomorphic to the convolution 
$*$-algebra $C_c(\mathcal{G})$ of our transformation groupoid $\mathcal{G}$. Explicitly, this $*$-isomorphism
$\Upsilon:C_c(\mathcal{G})\to C_c(Q)[G]$ assigns to every function $f\in C_c(G\times Q)$ the $G$-tuple
$a=(a_g)_{g\in G}\in C_c(Q)[G]$ given by $a_g(q)=f(g,g^{-1}q)$. By completion -- using the equalities
 \eqref{def-full-norm-cp} and \eqref{def-full-norm-cp2}, this gives rise to a
$*$-isomorphism $\Upsilon_{\text{full}}:C^*(\mathcal{G})\to C_0(Q)\rtimes G$.

The linear surjection 
\begin{equation}
C_0(Q)[G]\ni (a_g)_{g\in G}\longmapsto a_e\in C_0(Q)
\label{exp-0-cp}
\end{equation}
 is $\|\,.\,\|_{\text{full}}$-contractive, thus
it gives rise to conditional expectation $\mathbb{E}^{\rtimes}$ of $C_0(Q)\rtimes G$ onto $C_0(Q)$, which acts on the generators as
$$\mathbb{E}^\rtimes (f\mathbf{u}_g)=\delta_{\nagysecondrev{e,g}}f,\,\,\,f\in C_0(Q),\,g\in G.$$
 Using the identification 
between $Q$ and the unit space $\mathcal{G}^{(0)}$ of our transformation groupoid $\mathcal{G}$, we have a commutative diagram
$$
\xymatrix{
C^*(\mathcal{G}) \ar[rr]^{\Upsilon_{\text{full}}\,\,\,\,}_{\sim\,\,\,\,\,}\ar[d]_{\mathbb{E}^{\mathcal{G}}}
& & C_0(Q)\rtimes G\ar[d]^{\mathbb{E}^{\rtimes}}\\
C_0(\mathcal{G}^{(0)})
\ar@{=}[rr] 
& & C_0(Q)
}
$$
When we apply the KSGNS construction, the above diagram will yield a $*$-isomorphism
$\Upsilon_{\text{red}}:C^*_{\text{red}}(\mathcal{G})\to
C_0(Q)\rtimes_{\text{red}} G$ between the reduced $C^*$-algebra of $\mathcal{G}$ and the reduced crossed product $C^*$-algebra. Equivalently, one can view $C_0(Q)\rtimes_{\text{red}}G$ as the completion of
$C_c(Q)[G]$ with respect with the unique $C^*$-norm $\|\,.\,\|_{\text{red}}(\leq\|\,.\,\|_{\text{full}})$ that makes the $*$-isomorphism $\Upsilon:(C_c(\mathcal{G}),\|\,.\,\|_{\text{red}})\to 
(C_c(Q)[G],\|\,.\,\|_{\text{red}})$ isometric; one then has two commutative diagrams
$$
\xymatrix{
C^*(\mathcal{G}) \ar[rr]^{\Upsilon_{\text{full}}\,\,\,\,}_{\sim\,\,\,\,\,}
\ar[d]_{\pi_{\text{red}}^{\mathcal{G}}}
& & C_0(Q)\rtimes G\ar[d]^{\pi_{\text{red}}^{\rtimes}}\\
C^*_{\text{red}}(\mathcal{G}) \ar[rr]^{\Upsilon_{\text{red}}\,\,\,\,}_{\sim\,\,\,\,\,}
\ar[d]_{\mathbb{E}_{\text{red}}^{\mathcal{G}}}
& & C_0(Q)\rtimes _{\text{red}} G\ar[d]^{\mathbb{E}_{\text{red}}^{\rtimes}}\\
C_0(\mathcal{G}^{(0)})
\ar@{=}[rr] 
& & C_0(Q)
}
$$
where $\pi^\rtimes_{\text{red}}:C_0(Q)\rtimes G\to C_0(Q)\rtimes_{\text{red}} G$ is the quotient $*$-homomorphism, and $\mathbb{E}^\rtimes_{\text{red}}:C_0(Q)\rtimes_{\text{red}}\to C_0(Q)$ is the (unique) conditional expectation
arising from \eqref{exp-0-cp} (which is also $\|\,.\,\|_{\text{red}}$-contractive), that yields the factorization
$\mathbb{E}^\rtimes=\mathbb{E}^\rtimes_{\text{red}}\circ\pi^\rtimes_{\text{red}}$.
(Again, with the help of the $*$-homomorphism $\chi_e^{\text{red}}=\pi^\rtimes_{\text{red}}\circ\chi_e:C_0(Q)\hookrightarrow C_0(Q)\rtimes_{\text{red}}G$, we view $C_0(Q)$ as a non-degenerate $C^*$-subalgebra of
$C_0(Q)\rtimes_{\text{red}}G$, and omit $\chi^{\text{red}}_e$ from our notation.)

Lastly, by applying the construction of the unitaries satisfying \eqref{uPhi} to the surjective (thus non-degenerate) $*$-homomorphism
$\pi^\rtimes_{\text{red}}:C_0(Q)\rtimes G\to C_0(Q)\rtimes_{\text{red}}G$, we also obtain a group homomorphism
$G\ni g\longmapsto \mathbf{u}^{\text{red}}_g\in \mathcal{U}(M(C_0(Q)\rtimes_{\text{red}}G))$
satisfying
\begin{equation}
\pi^\rtimes_{\text{red}}(f\mathbf{u}_g)= f\mathbf{u}^{\text{red}}_g,\,\,\,
\forall\,f\in C_0(Q),\,g\in G.
\label{ured}
\end{equation}

\begin{remark}\label{GV-rem}
A point $(g,q)\in G\times Q=\mathcal{G}$ belongs to the isotropy groupoid $\text{Iso}(\mathcal{G})$, if and only if
$gq=q$; in other words, $g$ belongs to the {\em stabilizer group\/} $G_q=\{g\in G\,:\,gq=q\}$.
Furthermore, using the fact that $G$ comes equipped with the discrete topology, $(g,q)$ belongs to the interior $\text{IntIso}(\mathcal{G})$, if and only if $g\in G_{q'}$, for all $q'$ in some neighborhood of $q$. In other words,
if we define, for any open set $V\subset Q$, the {\em set stabilizer group\/} 
$$G_V=\{g\in G\,:\,gq=q,\,\,\forall\,q\in V\},$$
then, when we identify $Q$ with the unit space $\mathcal{G}^{(0)}$ of our transformation groupoid $\mathcal{G}$, for any $q\in Q$, the group $\text{IntIso}(\mathcal{G})q$ is equal to the {\em interior stabilizer group}
$$G^\circ_q=\bigcup_{\substack{V\text{ open}\\ V\ni q}}G_V.$$
\end{remark}

\begin{mycomment}
When translating the groupoid dictionary to dynamical systems, the following slightly different terminology is employed.
\begin{itemize}
\item[(a)]
The transformation groupoid
$\mathcal{G}=G\times Q $ is minimal (as in Remark \ref{rem-nfn*}), if and only 
$G\curvearrowright Q$ is a {\em minimal action}, in the sense that
the only open/closed
$G$-invariant subsets $S\subset Q$ are $S=\varnothing,\,Q$.
\item[(b)]
The transformation groupoid
$\mathcal{G}=G\times Q $ is topologically principal (as in condition (iii) from Corollary \ref{full-simple-thm}), if and only 
the action is $G\curvearrowright Q$ is a {\em topologically free}, in the sense that
 the set
$\big\{q\in Q\,:\,G_q=\{e\}\big\}$ is dense in $Q$
\end{itemize}
As pointed out in the Comment that followed Remark \ref{rem-nfn*}, minimality for the action
$G\curvearrowright Q$ is always a necessary condition for the simplicity of either
$C_0(Q)\rtimes G$ or $C_0(Q)\rtimes_\text{red}G$.
\end{mycomment}

When we specialize Corollary \ref{full-simple-thm} to the transformation groupoid, and use the above dictionary,
one recovers the following well-known result of Kawamura and Tomyiama.
\begin{corollary}{(cf. \cite{AS} and \cite[Thm. 4.4.]{KT})}\label{full-simple-cross-prod-thm}
The full crossed product $C^*$-algebra $C_0(Q)\rtimes G$ is simple, if and only if all three conditions below are satisfied
\begin{itemize}
\item[(i)] the action $\mathcal{G}\curvearrowright Q$ is minimal;
\item[(ii)] $\pi_{\text{\rm red}}^\rtimes:C_0(Q)\rtimes G\to C_0(Q)\rtimes_{\text{\rm red}}G$ is an isomorphism;
\item[(iii)] the action $\mathcal{G}\curvearrowright Q$ is topologically free.
\end{itemize}
\end{corollary}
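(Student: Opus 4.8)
The plan is to obtain this corollary by specializing Corollary \ref{full-simple-thm} to the transformation groupoid $\mathcal{G}=G\times Q$ and then translating each of its three conditions into dynamical language, using the $*$-isomorphisms $\Upsilon_{\text{full}},\Upsilon_{\text{red}}$ and the groupoid-to-dynamics dictionary assembled above. First I would invoke the $*$-isomorphism $\Upsilon_{\text{full}}:C^*(\mathcal{G})\to C_0(Q)\rtimes G$: since simplicity is preserved under $*$-isomorphism, $C_0(Q)\rtimes G$ is simple if and only if $C^*(\mathcal{G})$ is simple. This reduces the whole statement to matching conditions (i)--(iii) of Corollary \ref{full-simple-thm} (for $\mathcal{G}$) against conditions (i)--(iii) of the present corollary.

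For condition (i), the equivalence ``$\mathcal{G}$ minimal $\Leftrightarrow$ $G\curvearrowright Q$ minimal'' is exactly item (a) of the Comment recording the groupoid-to-dynamics dictionary (following Remark \ref{GV-rem}). For condition (ii), I would read off the upper square of the large commutative diagram displayed above (the one relating $\Upsilon_{\text{full}}$, $\Upsilon_{\text{red}}$, $\pi^{\mathcal{G}}_{\text{red}}$ and $\pi^\rtimes_{\text{red}}$), which gives $\pi^\rtimes_{\text{red}}\circ\Upsilon_{\text{full}}=\Upsilon_{\text{red}}\circ\pi^{\mathcal{G}}_{\text{red}}$; since both horizontal arrows $\Upsilon_{\text{full}},\Upsilon_{\text{red}}$ are $*$-isomorphisms, the map $\pi^{\mathcal{G}}_{\text{red}}$ is a $*$-isomorphism if and only if $\pi^\rtimes_{\text{red}}$ is. Thus conditions (i) and (ii) transfer by direct citation.

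The translation of condition (iii) is where some care is needed, and I would regard it as the crux of the argument. Condition (iii) of Corollary \ref{full-simple-thm} demands $\text{IntIso}(\mathcal{G})u=\{e\}$ for \emph{every} unit $u$, whereas topological freeness of $G\curvearrowright Q$ asks only that the stabilizers $G_q$ be trivial on a \emph{dense} set of $q$. I would bridge this gap in two steps. First, the Comment following Corollary \ref{full-simple-thm} already shows, using the density of the $G_\delta$-set $\mathcal{G}^{(0)}_\circ=\{u:\text{IntIso}(\mathcal{G})u=\text{Iso}(\mathcal{G})u\}$, that pointwise-everywhere triviality of the interior isotropy is equivalent to topological principality of $\mathcal{G}$ (triviality of $\text{Iso}(\mathcal{G})u$ on a dense set of units). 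Second, item (b) of the dictionary Comment identifies topological principality of $\mathcal{G}$ with topological freeness of the action; here Remark \ref{GV-rem} provides the needed identifications $\text{IntIso}(\mathcal{G})q=G^\circ_q$ and $\text{Iso}(\mathcal{G})q\cong G_q$. Chaining these equivalences matches condition (iii) of Corollary \ref{full-simple-thm} with condition (iii) of the present statement, and assembling the three translations with the simplicity transfer along $\Upsilon_{\text{full}}$ completes the proof.
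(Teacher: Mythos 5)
Your proposal is correct and follows precisely the route the paper intends: the paper supplies no separate proof, saying only that one specializes Corollary \ref{full-simple-thm} to the transformation groupoid and uses the dictionary, which is exactly your chain --- transfer of simplicity along $\Upsilon_{\text{full}}$, item (a) of the dictionary Comment for minimality, the commutative diagram $\pi^\rtimes_{\text{red}}\circ\Upsilon_{\text{full}}=\Upsilon_{\text{red}}\circ\pi^{\mathcal{G}}_{\text{red}}$ for condition (ii), and the identifications $\text{IntIso}(\mathcal{G})q=G^\circ_q$, $\text{Iso}(\mathcal{G})q\simeq G_q$ from Remark \ref{GV-rem} together with dictionary item (b) for condition (iii). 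Your two-step bridge for (iii) --- everywhere-trivial interior isotropy $\Leftrightarrow$ topological principality via the density of $\mathcal{G}^{(0)}_\circ$, then $\Leftrightarrow$ topological freeness --- correctly fills in the one step the paper leaves implicit in the Comment following Corollary \ref{full-simple-thm}.
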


Concerning the simplicity of the reduced crossed product, 
in preparation for our adaptation of the main results from the previous section (Theorem \ref{simplicity-for-red}
and Proposition \ref{red-simple-converse-other}), we begin by clarifying the status of 
the open subgroupoid $\text{IntIso}(\mathcal{G})\subset\mathcal{G}$.

\begin{notation}
For each $g\in G$,  denote the fixed point set $\{q\in Q\,:\,gq=q\}$ by $Q^g$.
\end{notation}

\begin{remark}\label{intiso-cross}
Using the $*$-isomorphism
$\Upsilon:C_c(\mathcal{G})\to C_c(Q)[G]$, and viewing
$C_c(Q)[G]$ as either a $*$-subalgebra in $C_0(Q)\rtimes G$, or in $C_0(Q)\rtimes_{\text{red}}G$,
 the $*$-subalgebra $C_c(\text{IntIso}(\mathcal{G}))\subset C_c(\mathcal{G})$ gets identified with either one of 
the $*$-subalgebras (using the convention $C_c(\varnothing)=\{0\}$):
\begin{align*}
\mathscr{A}&=\sum_{g\in G}C_c(\text{Int}(Q^g))\mathbf{u}_g\subset C_0(Q)\rtimes G,\\
\mathscr{A}_{\text{red}}=\pi^\rtimes_{\text{red}}(\mathscr{A})&=\sum_{g\in G}C_c(\text{Int}(Q^g))\mathbf{u}_g^{\text{red}}
\subset C_0(Q)\rtimes_{\text{red}} G.
\end{align*}
The sums defining these $*$-subalgebras are {\em direct\/} sums. The fact that $\mathscr{A}$ is a $*$-subalgebra
can also be seen (without any reference to the $*$-homomorphism $\Upsilon$) from the following easy observations:
\begin{itemize}
\item[(i)] if $f\in C_c(\text{Int}(Q^g))$, then $f\mathbf{u}_g=\mathbf{u}_gf$;
\item[(ii)] if $f_j\in C_c(\text{Int}(Q^{g_j}))$, $j=1,2$, then $(f_1\mathbf{u}_{g_1})(f_2\mathbf{u}_{g_2})=
\mathbf{u}_{g_1}(f_1f_2)\mathbf{u}_{g_2}
=(f_1f_2)\mathbf{u}_{g_1g_2}$, where $f_1f_2\in C_c(\text{Int}(Q^{g_1})\cap \text{Int}(Q^{g_2}))\subset
C_c(\text{Int}(Q^{g_1g_2}))$.
\end{itemize}
Using either one of the $*$-isomorphisms 
$C^*(\mathcal{G})\xrightarrow{\Upsilon_{\text{full}}} C_0(Q)\rtimes G$, or
$C^*_{\text{red}}(\mathcal{G})\xrightarrow{\Upsilon_{\text{red}}} C_0(Q)\rtimes_{\text{red}} G$, 
 we can identify 
$C^*(\text{IntIso}(\mathcal{G}))$ with the closure $\overline{\mathscr{A}}\subset C_0(Q)\rtimes G$, and
$C_{\text{red}}^*(\text{IntIso}(\mathcal{G}))$ with the closure $\overline{\mathscr{A}_{\text{red}}}\subset
C_0(Q)\rtimes_{\text{red}} G$.
\end{remark}
\begin{remark}
Fix for the moment $q\in Q$ (viewed as a unit in
$\text{IntIso}(\mathcal{G})^{(0)}$).
Following the treatment from Section \ref{bundles-sec}, 
we have a $C^*$-seminorm $p^{\text{full}}_q$ on
$C^*(\text{IntIso}(\mathcal{G}))$ and two $C^*$-seminorms
$p^{\text{other}}_q$, $p^{\mathbb{E}_{\text{red}}}_q$ on $C^*_{\text{red}}(\text{IntIso}(\mathcal{G}))$.
Using the central inclusions 
$$C^*(\text{IntIso}(\mathcal{G}))\supset C_0(Q)\subset C^*_{\text{red}}(\text{IntIso}(\mathcal{G})),$$
 the seminorms $p^{\text{full}}_q$ and $p^{\text{other}}_q$ are defined 
as 
$$b\longmapsto \inf \{ ||fb||: f \in C_{0,q}(Q), 0 \leq f \leq 1=f(q) \},$$
where $b$ belongs to either $C^*(\text{IntIso}(\mathcal{G}))$, or to
$C^*_\text{red}(\text{IntIso}(\mathcal{G}))$.
The seminorm $p^{\mathbb{E}_{\text{red}}}_q$ is defined using the GNS representation
$\Gamma_{\text{ev}_q\circ\mathbb{E}_{\text{red}}}$ of $C^*_{\text{red}}(\text{IntIso}(\mathcal{G}))$ associated with the state  $\text{ev}_q\circ\mathbb{E}_{\text{red}}$. 

When transferring this set-up to $\overline{\mathscr{A}}\simeq C^*(\text{IntIso}(\mathcal{G}))$ and 
$\overline{\mathscr{A}_\text{red}}\simeq C^*_\text{red}(\text{IntIso}(\mathcal{G}))$, we are now dealing with seminorms which we denote
$p^\text{full}_{q\rtimes}$ (on $\overline{\mathscr{A}}$),
$p^\text{other}_{q\rtimes}$ and
$p^\text{red}_{q\rtimes}$ (on $\overline{\mathscr{A}_\text{red}}$).
As in the preceding paragraph, the seminorms $p^\text{full}_{q\rtimes}$ and
$p^\text{other}_{q\rtimes}$ are given by
\begin{align}
p^\text{full}_{q\rtimes}(a)&= \inf \{ ||fa||: f \in C_{0,q}(Q), 0 \leq f \leq 1= f(q) \},\,\,\,a\in
\overline{\mathscr{A}},\label{pfulA}
\\
p^\text{other}_{q\rtimes}(a)&= \inf \{ ||fa||_\text{red}: f \in C_{0,q}(Q), 0 \leq f \leq 1=f(q) \},\,\,\,a\in
\overline{\mathscr{A}_\text{red}},\label{potherA}
\end{align}
while the seminorm $p^\text{red}_{q\rtimes}$ is given by the
GNS representation $\Gamma_{\text{ev}_q\circ\mathbb{E}^\rtimes_{\text{red}}}$ of 
$\overline{\mathscr{A}_\text{red}}$, associated with the state  $\text{ev}_q\circ\mathbb{E}^\rtimes_{\text{red}}\big|_{\overline{\mathscr{A}_\text{red}}}$. 

Alternatively, these seminorms can be realized as follows. Start off with the group algebra
$\mathbb{C}[G^\circ_q]=\mathbb{C}[\text{IntIso}(\mathcal{G})q]$,  denote its canonical unitary generators by
$\{\mathbf{x}_g\}_{g\in G^\circ_q}$, and consider the $*$-homomorphisms (defined for sums indexed by finite sets $F\subset G$)
\begin{align*}
\boldsymbol{\epsilon}_q:\mathscr{A}
\ni\sum_{g\in F}f_g\mathbf{u}_g &\longmapsto \sum_{g\in F}f_g(q)\mathbf{x}_g\in  \mathbb{C}[G^\circ _q],
\\
\boldsymbol{\epsilon}^\prime_q:\mathscr{A}_\text{red}
\ni\sum_{g\in F}f_g\mathbf{u}^\text{red}_g &\longmapsto \sum_{g\in F}f_g(q)\mathbf{x}_g\in  \mathbb{C}[G^\circ _q].
\end{align*}
On the group algebra $\mathbb{C}[G^\circ _q]$ we now have two $C^*$-norms $\|\,.\,\|_\text{full}$ (the full $C^*$-norm) and $\|\,.\,\|_{\text{other},q}\leq\|\,.\,\|_\text{full}$, which by completion
allow us to extend the above $*$-homomorphisms to two 
surjective $*$-homomorphisms, 
$\boldsymbol{\epsilon}^\text{full}_q:\overline{\mathscr{A}}\to C^*(G^\circ _q)$ and
$\boldsymbol{\epsilon}^\text{other}_q:\overline{\mathscr{A}_\text{red}}\to C^*_{\text{other},q}(G^\circ _q)$, so that
\begin{align}
p^\text{full}_{q\rtimes}(a)&=\|\boldsymbol{\epsilon}^\text{full}_q(a)\|_\text{full},\,\,\,\forall\,a
\in\overline{\mathscr{A}};
\\
p^\text{other}_{q\rtimes}(a)&=\|\boldsymbol{\epsilon}^\text{other}_q(a)\|_{\text{other},q},\,\,\,\forall\,a
\in\overline{\mathscr{A}_\text{red}}.
\end{align}
Also, if we equip $\mathbb{C}[G^\circ _q]$ with the reduced $C^*$-norm $\|\,.\,\|_\text{red}\leq
\|\,.\,\|_{\text{other},q}$, by completion, $\boldsymbol{\epsilon}^\prime_q$ also gives rise to a surjective
$*$-homomorphism $\boldsymbol{\epsilon}^\text{red}_q:\overline{\mathscr{A}_\text{red}}\to C^*_\text{red}(G^\circ_q)$,
which allows us to realize
\begin{equation}
p^\text{red}_{q\rtimes}(a)=\|\boldsymbol{\epsilon}^\text{red}_q(a)\|_\text{red},\,\,\,\forall\,a
\in\overline{\mathscr{A}_\text{red}}.
\end{equation}
Using \eqref{norm-sup-punif}, we have
\begin{align}
\|a\|&=\sup_{q\in Q} p^\text{full}_{q\rtimes}(a)=
\sup_{q\in Q}\|\boldsymbol{\epsilon}^\text{full}_q(a)\|_\text{full},\,\,\,a\in\overline{\mathscr{A}};
\label{norm-on-A=sup}
\\
\|a\|_\text{red}&=\sup_{q\in Q} p^\text{other}_{q\rtimes}(a)=
\sup_{q\in Q}\|\boldsymbol{\epsilon}^\text{other}_q(a)\|_{\text{other},q}\,\,\,a\in\overline{\mathscr{A}_\text{red}}.
\end{align}
Using the faithfulness of $\mathbb{E}^\rtimes_\text{red}$, we also have:
\begin{equation}
\|a\|_\text{red}=\sup_{q\in Q} p^\text{red}_{q\rtimes}(a)=
\sup_{q\in Q}\|\boldsymbol{\epsilon}^\text{red}_q(a)\|_{\text{red}}\,\,\,a\in\overline{\mathscr{A}_\text{red}}.
\label{norm-on-Ared=sup}\end{equation}
\end{remark}

\begin{theorem}\label{thm-Delta-incl}
Let $\{\mathbf{v}_g\}_{g\in G}\subset C^*(G)$ and $\{\mathbf{v}^{\text{\rm red}}_g\}_{g\in G}\subset C^*_{\text{\rm red}}(G)$ denote the standard unitary generators if the full, or reduced group $C^*$-algebras, respectively.
The linear maps (defined on sums indexed by finite subsets $F\subset G$)
\begin{align}
\Delta:\mathscr{A}\ni\sum_{g\in F}f_{\nagyrev{g}}\mathbf{u}_g&\longmapsto \sum_{g\in F}f_{\nagyrev{g}}\otimes\mathbf{v}_g\in 
 C_0(Q)\otimes C^*(G),\label{Delta-def}
\\
\Delta^\prime:\mathscr{A}_\text{\rm red}\ni\sum_{g\in F}f_{\nagyrev{g}}\mathbf{u}^{\text{\rm red}}_g&\longmapsto 
\sum_{g\in F}f_{\nagyrev{g}}\otimes\mathbf{v}^{\text{\rm red}}_g\in  C_0(Q)\otimes C_{\text{\rm red}}^*(G)
\label{Delta-red-def}\end{align}
extend to injective $*$-homomorphisms 
$\Delta_\text{\rm full}:\overline{\mathscr{A}}\to C_0(Q)\otimes C^*(G)$ and
$\Delta_{\text{\rm red}}:\overline{\mathscr{A}_{\text{\rm red}}}\to C_0(Q)\otimes C^*_{\text{\rm red}}(G)$.

{\rm (In \eqref{Delta-def} and \eqref{Delta-red-def}, the symbol $\otimes$ denote\nagyrev{s} the maximal tensor product. Of course, since $C_0(Q)$ is nuclear, the maximal and minimal tensor products involved here {\em coincide}.
\nagyrev{We remind the reader that the sums appearing on the left sides involve only functions $f_g\in C_c(Q^g)$.})}  
\end{theorem}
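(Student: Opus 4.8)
The plan is to handle the full and reduced cases in parallel, first verifying that $\Delta$ and $\Delta'$ are $*$-homomorphisms on the dense $*$-subalgebras $\mathscr{A}$ and $\mathscr{A}_{\text{red}}$, and then extracting the bounded injective extensions from the embedding criterion, Proposition \ref{embed-crit}. The decisive feature, already recorded in Remark \ref{intiso-cross}, is that on the interior isotropy the $G$-twist disappears: for $f\in C_c(\text{Int}(Q^g))$ one has $f\mathbf{u}_g=\mathbf{u}_g f$ and $(f\mathbf{u}_g)^*=\bar f\,\mathbf{u}_{g^{-1}}$, while for $f_j\in C_c(\text{Int}(Q^{g_j}))$ the product is the untwisted $(f_1\mathbf{u}_{g_1})(f_2\mathbf{u}_{g_2})=(f_1f_2)\mathbf{u}_{g_1g_2}$ with $f_1f_2\in C_c(\text{Int}(Q^{g_1g_2}))$. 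Applying $\Delta$ and using $\mathbf{v}_{g_1}\mathbf{v}_{g_2}=\mathbf{v}_{g_1g_2}$, $\mathbf{v}_g^*=\mathbf{v}_{g^{-1}}$ in $C^*(G)$, one reads off $\Delta\big((f_1\mathbf{u}_{g_1})(f_2\mathbf{u}_{g_2})\big)=\Delta(f_1\mathbf{u}_{g_1})\Delta(f_2\mathbf{u}_{g_2})$ and $\Delta\big((f\mathbf{u}_g)^*\big)=\Delta(f\mathbf{u}_g)^*$; the identical computation applies to $\Delta'$. These are routine once the twist has been removed, so I would not belabor them.

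For the extension and injectivity I would invoke Proposition \ref{embed-crit} with $A_0=\mathscr{A}$, $A=\overline{\mathscr{A}}$, $B=C_0(Q)\otimes C^*(G)$, and $\pi_0=\Delta$. As comparison data, for each $q\in Q$ I take $D_q=C^*(G)$ together with $\sigma_q=\mathrm{ev}_q\otimes\mathrm{id}:C_0(Q)\otimes C^*(G)\to C^*(G)$ and $\psi_q=\iota_q\circ\boldsymbol{\epsilon}^{\text{full}}_q:\overline{\mathscr{A}}\to C^*(G)$, where $\iota_q:C^*(G^\circ_q)\to C^*(G)$ is the canonical $*$-homomorphism $\mathbf{x}_g\mapsto\mathbf{v}_g$ induced by the subgroup inclusion $G^\circ_q\hookrightarrow G$. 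On generators, $\sigma_q(\Delta(f\mathbf{u}_g))=f(q)\mathbf{v}_g=\psi_q(f\mathbf{u}_g)$, which gives condition $(c)$; and $\psi_q$, being a composite of $*$-homomorphisms, is automatically bounded and defined on all of $\overline{\mathscr{A}}$ (this is exactly why I route $\psi_q$ through the already-available surjection $\boldsymbol{\epsilon}^{\text{full}}_q$ rather than through $\Delta$ itself).

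It then remains to check conditions $(a)$ and $(b)$. For $(a)$, the identification $C_0(Q)\otimes C^*(G)\cong C_0(Q,C^*(G))$ gives $\|F\|=\sup_q\|\sigma_q(F)\|$, so $\{\sigma_q\}$ is jointly faithful on $B$. For $(b)$ I would use that $\iota_q$ is injective --- the standard fact that the full (resp.\ reduced) group $C^*$-algebra of a subgroup embeds isometrically, via induction of representations (resp.\ the coset decomposition of the left regular representation). Since each $\psi_q$ is a $*$-homomorphism, $K_{\psi_q}=\ker\psi_q=\ker\boldsymbol{\epsilon}^{\text{full}}_q=\ker p^{\text{full}}_{q\rtimes}$, the middle equality using injectivity of $\iota_q$; hence $K_\Psi=\bigcap_q\ker p^{\text{full}}_{q\rtimes}=\{0\}$ by \eqref{norm-on-A=sup}. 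Proposition \ref{embed-crit} then produces the injective extension $\Delta_{\text{full}}$.

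The reduced statement is obtained by the same argument, replacing $C^*(G)$, $\mathbf{v}_g$, $\boldsymbol{\epsilon}^{\text{full}}_q$, $\iota_q$ and \eqref{norm-on-A=sup} by $C^*_{\text{red}}(G)$, $\mathbf{v}^{\text{red}}_g$, $\boldsymbol{\epsilon}^{\text{red}}_q$, the reduced inclusion $\iota^{\text{red}}_q:C^*_{\text{red}}(G^\circ_q)\to C^*_{\text{red}}(G)$ and \eqref{norm-on-Ared=sup}. The main obstacle is arranging $(b)$ correctly: one must channel $\psi_q$ through the surjections $\boldsymbol{\epsilon}^{\text{full}}_q$/$\boldsymbol{\epsilon}^{\text{red}}_q$ so that boundedness on $\overline{\mathscr{A}}$ (resp.\ $\overline{\mathscr{A}_{\text{red}}}$) comes for free, and then lean on the injectivity of the subgroup inclusions to identify $\ker\psi_q$ with the seminorm kernel --- for if $\iota_q$ failed to be injective, the family $\{\psi_q\}$ could fail to be jointly essentially faithful and the whole scheme would collapse.
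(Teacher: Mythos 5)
Your proposal is correct and follows essentially the same route as the paper: after the routine check that $\Delta$ and $\Delta'$ are $*$-homomorphisms (using the untwisted multiplication on the interior isotropy from Remark \ref{intiso-cross}), the paper also applies Proposition \ref{embed-crit} with exactly your comparison data $\sigma_q=\mathrm{ev}_q$ and $\psi_q=\iota_q\circ\boldsymbol{\epsilon}^{\text{full}}_q$ (resp.\ $\iota^{\text{red}}_q\circ\boldsymbol{\epsilon}^{\text{red}}_q$), verifying condition $(c)$ on the monomials $f\mathbf{u}_g$ and deducing the joint faithfulness of $\Psi$ from \eqref{norm-on-A=sup} and \eqref{norm-on-Ared=sup} together with the injectivity of the subgroup inclusions $C^*(G^\circ_q)\hookrightarrow C^*(G)$ and $C^*_{\text{red}}(G^\circ_q)\hookrightarrow C^*_{\text{red}}(G)$ (a fact the paper records, via positive-definite-function extension, in its Appendix). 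Your explicit emphasis on why $\iota_q$ must be injective --- so that $\ker\psi_q$ coincides with $\ker p^{\text{full}}_{q\rtimes}$ --- is a point the paper leaves implicit, but the argument is the same.
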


\begin{proof}
First of all, $\Delta$ and $\Delta'$ are clearly $*$-homomorphisms.

Second, consider, for each $q\in Q$ the injective $*$-homomorphisms
$\iota^\text{full}_q:C^*(G^\circ_q)\to C^*(G)$ and 
$\iota^\text{red}_q:C^*_\text{red}(G^\circ_q)\to C^*_\text{red}(G)$ arising from the inclusion
$\mathbb{C}[G^\circ_q]\subset \mathbb{C}[G]$ (which in turn comes from the inclusion $G^\circ_q\subset G$).
Also, denote by $\text{ev}^\text{full}_q:C_0(Q)\otimes C^*(G)\to C^*(G)$ and
$\text{ev}^\text{red}_q:C_0(Q)\otimes C^*_\text{red}(G)\to C^*_\text{red}(G)$ the evaluation maps.
The desired conclusion now follows from Proposition \ref{embed-crit} applied
\begin{itemize}
\item[(a)] to the $*$-homomorphism $\Delta$ and the families $\Sigma^\text{full}=\{C_0(Q)\otimes C^*(G)\xrightarrow{\text{ev}^\text{full}_q}C^*(G)\}_{q\in Q}$, 
$\Psi^\text{full}=\{\overline{\mathscr{A}}\xrightarrow{\iota^\text{full}_q\circ\boldsymbol{\epsilon}^\text{full}_q}C^*(G)\}_{q\in Q}$, and likewise,
\item[(b)] to the the $*$-homomorphism $\Delta'$ and the families $\Sigma^\text{red}=
\{C_0(Q)\otimes C^*_\text{red}(G)\xrightarrow{\text{ev}^\text{red}_q}C^*_\text{red}(G)\}_{q\in Q}$, 
$\Psi^\text{red}=\{\overline{\mathscr{A}_\text{red}}\xrightarrow{\iota^\text{red}_q\circ\boldsymbol{\epsilon}^\text{red}_q}C^*_\text{red}(G)\}_{q\in Q}$.
\end{itemize}
Indeed, on the one hand, 
it is pretty evident that, for a fixed $q\in Q$, the $*$-homomorphisms $\text{ev}^\text{full}_q$ and $\iota^\text{full}_q\circ\boldsymbol{\epsilon}^\text{full}_q$ act the same way on monomials of the form $f\mathbf{u}_g\in\mathscr{A}$,
and likewise,
$\text{ev}^\text{red}_q$ and $\iota^\text{red}_q\circ\boldsymbol{\epsilon}^\text{red}_q$ act same way on
on $f\mathbf{u}^\text{red}_g\in\mathscr{A}_\text{red}$.
(The monomial referred to in each instance involves some $g\in G$ and  $f\in C_c(\text{Int}(Q^g))$.)
One the other hand,
by \eqref{norm-on-A=sup} and \eqref{norm-on-Ared=sup}, both $\Psi^\text{full}$ and $\Psi^\text{red}$ are jointly faithful, while the joint faithfulness of $\Sigma^\text{full}$ and $\Sigma^\text{red}$ is obvious.
\end{proof}

\begin{theorem}\label{t-g-cont-thm}
The isotropic groupoid $\text{\rm IntIso}(\mathcal{G})$ associated to the \'etale transformation group\-oid
$\mathcal{G}=G\times Q$ exhibits the following continuity properties:
\begin{itemize}
\item[(i)] For each $a\in C^*(\text{\rm IntIso}(\mathcal{G}))$, the map
$Q\ni q\longmapsto p^\text{\rm full}_q(a)\in [0,\infty)$ is continuous.
\item[(ii)] For each $a\in C^*_\text{\rm red}(\text{\rm IntIso}(\mathcal{G}))$,
the map
$Q\ni q\longmapsto p^{\mathbb{E}_\text{\rm red}}_q(a)\in [0,\infty)$ is continuous.
In particular, every $q\in Q$ is a \nagyrev{unit of continuous reduction relative to $\mathbb{E}_\text{red}$}, i.e. the $C^*$-seminorms $p^\text{\rm other}_q$ and
$p^{\mathbb{E}_\text{\rm red}}_q$ coincide on $C^*_\text{\rm red}(\text{\rm IntIso}(\mathcal{G}))$, thus the
$C^*$-norms $\|\,.\,\|_{\text{\rm other},q}$ and $\|\,.\,\|_{\text{\rm red}}$ coincide on
$\mathbb{C}[\text{\rm IntIso}(\mathcal{G})q]=\mathbb{C}[G^\circ _q]$.
\end{itemize}
\end{theorem}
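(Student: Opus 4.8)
The plan is to exploit the embeddings furnished by Theorem~\ref{thm-Delta-incl}, which realize $\overline{\mathscr{A}}\simeq C^*(\text{IntIso}(\mathcal{G}))$ and $\overline{\mathscr{A}_\text{red}}\simeq C^*_\text{red}(\text{IntIso}(\mathcal{G}))$ as $C^*$-subalgebras of the \emph{trivial} $C_0(Q)$-bundles $C_0(Q)\otimes C^*(G)$ and $C_0(Q)\otimes C^*_\text{red}(G)$, respectively. The whole point is that (since $C_0(Q)$ is nuclear) these ambient algebras are isomorphic to $C_0(Q,C^*(G))$ and $C_0(Q,C^*_\text{red}(G))$, so for any element $b$ the fiber map $q\longmapsto \|\text{ev}_q(b)\|$ is nothing but $q\longmapsto\|b(q)\|$, which is continuous because $b$ is a norm-continuous ($D$-valued, vanishing-at-infinity) function. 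Thus it suffices to identify, for fixed $a$, the seminorms $p^\text{full}_q(a)$ and $p^{\mathbb{E}_\text{red}}_q(a)$ with the fiber norms $\|\text{ev}^\text{full}_q(\Delta_\text{full}(a))\|$ and $\|\text{ev}^\text{red}_q(\Delta_\text{red}(a))\|$.

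For this identification I would revisit the proof of Theorem~\ref{thm-Delta-incl}: there it is observed that, for each $q\in Q$, the composite $\text{ev}^\text{full}_q\circ\Delta_\text{full}$ agrees with $\iota^\text{full}_q\circ\boldsymbol{\epsilon}^\text{full}_q$ on the dense set of monomials $f\mathbf{u}_g\in\mathscr{A}$, hence everywhere by continuity; the analogous statement holds in the reduced setting. Consequently
$$\|\text{ev}^\text{full}_q(\Delta_\text{full}(a))\|=\|\iota^\text{full}_q(\boldsymbol{\epsilon}^\text{full}_q(a))\|,\qquad \|\text{ev}^\text{red}_q(\Delta_\text{red}(a))\|=\|\iota^\text{red}_q(\boldsymbol{\epsilon}^\text{red}_q(a))\|.$$
The crucial---and really the only substantive---point is that the subgroup inclusions $\iota^\text{full}_q:C^*(G^\circ_q)\to C^*(G)$ and $\iota^\text{red}_q:C^*_\text{red}(G^\circ_q)\to C^*_\text{red}(G)$ are \emph{isometric}. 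For the full case this is the standard fact that $C^*(H)\hookrightarrow C^*(G)$ is injective for any subgroup $H\leq G$ (induce representations of $H$ to $G$); for the reduced case it follows from the coset decomposition of the left regular representation of $G$, which restricts on $H$ to a multiple of the left regular representation of $H$. Granting isometry, the right-hand sides above equal $\|\boldsymbol{\epsilon}^\text{full}_q(a)\|_\text{full}=p^\text{full}_q(a)$ and $\|\boldsymbol{\epsilon}^\text{red}_q(a)\|_\text{red}=p^{\mathbb{E}_\text{red}}_q(a)$, using the realizations of these seminorms recorded just before Theorem~\ref{thm-Delta-incl}.

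Combining the two displays with the continuity of the fiber maps in the trivial bundles yields statements (i) and (ii). For the ``in particular'' claim, continuity of $q\longmapsto p^{\mathbb{E}_\text{red}}_q(a)$ for every $a$ means precisely, by the equivalence $(i)\Leftrightarrow(ii)$ of Proposition~\ref{cont-prop} (applied to the central inclusion $C_0(Q)\subset C^*_\text{red}(\text{IntIso}(\mathcal{G}))$ with expectation $\mathbb{E}_\text{red}$, so that $p^\text{unif}_q$ there is $p^\text{other}_q$ here), that every $q\in Q$ lies in $\mathcal{G}^{(0)}_\text{r-cont}$; the coincidence of $p^\text{other}_q$ with $p^{\mathbb{E}_\text{red}}_q$, and of the group $C^*$-norms $\|\cdot\|_{\text{other},q}$ and $\|\cdot\|_\text{red}$ on $\mathbb{C}[G^\circ_q]$, is then read off from the equivalent descriptions collected in Remark~\ref{G-isotropic-consequence}.A. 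The main obstacle is simply making the isometry of the two subgroup inclusions explicit; everything else is bookkeeping around the trivial-bundle structure of the target algebras.
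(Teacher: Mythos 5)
Your proposal is correct, and every ingredient you invoke is available in the paper; but the mechanism differs from the paper's own proof in an instructive way. The paper runs both parts through an abstract fact: a $C_0(Q)$-linear \emph{injective} $*$-homomorphism $\Delta$ into a trivial bundle $C_0(Q)\otimes B$ automatically computes the uniform seminorms, $p^\text{unif}_q(a)=\|(\text{ev}_q\circ\Delta)(a)\|$ (this rests on the semicontinuity-uniqueness Lemma \ref{usc-uniqueness}); applied to $\Delta_\text{full}$ this gives (i) at once, while applied to $\Delta_\text{red}$ it gives continuity of $p^\text{other}_q$ together with an isometric fiber map $\boldsymbol{j}_q:C^*_{\text{other},q}(G^\circ_q)\hookrightarrow C^*_\text{red}(G)$, and factoring $\boldsymbol{j}_q$ through $\boldsymbol{\kappa}_q$ via the standard inclusion $C^*_\text{red}(G^\circ_q)\hookrightarrow C^*_\text{red}(G)$ then yields $p^\text{other}_q=p^{\mathbb{E}_\text{red}}_q$, and only thereafter the continuity asserted in (ii). You instead identify the fiber norms concretely, using $\text{ev}_q\circ\Delta=\iota_q\circ\boldsymbol{\epsilon}_q$ on monomials plus the isometry of the subgroup inclusions $\iota^\text{full}_q$ and $\iota^\text{red}_q$ (both standard: the full one is the group case of the fact recorded in the paper's Appendix via extension of positive definite functions, the reduced one is the same coset-decomposition fact the paper uses parenthetically). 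This yields continuity of $p^\text{full}_q$ and of $p^{\mathbb{E}_\text{red}}_q$ directly --- notably without ever using the injectivity of the $\Delta$'s --- and you recover the equality $p^\text{other}_q=p^{\mathbb{E}_\text{red}}_q$ afterwards from Proposition \ref{cont-prop}, reversing the paper's order of deduction in (ii). What each approach buys: the paper's route obtains the isometry of $\boldsymbol{\kappa}_q$ as a structural byproduct of the injectivity of $\Delta_\text{red}$ and keeps the group-theoretic isometries confined to the proof of Theorem \ref{thm-Delta-incl}, whereas your route is more hands-on and isolates exactly which fiberwise facts drive the continuity. One caveat: you have not actually eliminated the semicontinuity-uniqueness input, since the implication (i)$\Rightarrow$(ii) of Proposition \ref{cont-prop} is itself proved from Lemma \ref{usc-uniqueness}, and that Proposition also requires the essential faithfulness of $\mathbb{E}_\text{red}$ as a hypothesis --- both supplied by the paper, so your argument is complete, merely relocating rather than avoiding that lemma.
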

\setcounter{myfactnr}{0}

\begin{proof}
The continuity statements from parts (i) and (ii) follow from the following elementary  facts.
\begin{myfact}
For any $C^*$-algebra $B$, the evaluation maps
$\text{\rm ev}_q:C_0(Q)\otimes B\to B$, $q\in Q$ give rise, for each $a\in C_0(Q)\otimes B$,  to a continuous map
$Q\ni q \longmapsto \|\text{\rm ev}_q(a)\|\in [0,\infty)$, which satisfies
$\sup_{q\in Q}\|\text{\rm ev}_q(a)\|=\|a\|$.
\end{myfact}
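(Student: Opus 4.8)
The plan is to reduce everything to the standard isometric $*$-isomorphism $C_0(Q)\otimes B\cong C_0(Q,B)$, where $C_0(Q,B)$ denotes the $C^*$-algebra of norm-continuous $B$-valued functions on $Q$ vanishing at infinity, under which an elementary tensor $f\otimes b$ corresponds to the function $q\longmapsto f(q)b$. (Since $C_0(Q)$ is abelian, hence nuclear, there is no ambiguity between the maximal and minimal tensor norms here.) Under this identification each evaluation $\ev_q$ becomes the point-evaluation $a\longmapsto a(q)\in B$, while the norm on $C_0(Q,B)$ is by definition $\|a\|=\sup_{q\in Q}\|a(q)\|$. This already delivers the asserted equality $\sup_{q\in Q}\|\ev_q(a)\|=\|a\|$.

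For the continuity of $q\longmapsto\|\ev_q(a)\|$, once the identification is in place the argument is immediate: $a$ is by definition a continuous map $Q\to B$, and the norm $B\to[0,\infty)$ is $1$-Lipschitz, so the composition $q\longmapsto\|a(q)\|$ is continuous. Should one prefer to avoid invoking the identification, I would instead first check continuity on the dense $*$-subalgebra of algebraic tensors, where for $a=\sum_{k=1}^n f_k\otimes b_k$ the map $q\longmapsto \ev_q(a)=\sum_k f_k(q)b_k$ is visibly norm-continuous, and then pass to the limit using that each $\ev_q$ is a contractive $*$-homomorphism: the estimate $\bigl|\,\|\ev_q(a)\|-\|\ev_q(a')\|\,\bigr|\le\|a-a'\|$ is uniform in $q$, so the functions $q\longmapsto\|\ev_q(a')\|$ converge uniformly as $a'\to a$, and a uniform limit of continuous functions is continuous.

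The only point carrying genuine content is the isometric identification itself, and within it the inequality $\|a\|\le\sup_{q\in Q}\|\ev_q(a)\|$; the reverse inequality is free, since each $\ev_q$ is a $*$-homomorphism and hence contractive. I expect this to be the main (though mild) obstacle. It amounts to the injectivity of the product $*$-homomorphism $a\longmapsto(\ev_q(a))_{q\in Q}$ into the bounded product $\ell^\infty(Q,B)$ (well-defined precisely because of the contractivity just noted), an injective $*$-homomorphism between $C^*$-algebras being automatically isometric; injectivity in turn holds because a nonzero element of $C_0(Q,B)$ is a nonzero continuous function and therefore takes a nonzero value at some point. As this is entirely standard, I would simply cite it rather than reprove the tensor-product identification in detail.
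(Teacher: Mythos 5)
Your proposal is correct, and there is nothing in the paper to diverge from: the authors state this Fact without proof, labelling it elementary, and your argument via the canonical isometric identification $C_0(Q)\otimes B\cong C_0(Q,B)$ (legitimate since $C_0(Q)$ is nuclear), under which $\ev_q$ becomes evaluation at $q$, the norm becomes the sup-norm, and continuity of $q\longmapsto\|\ev_q(a)\|$ follows from the $1$-Lipschitz property of the norm, is exactly the standard justification they implicitly invoke. Your fallback route (continuity on algebraic tensors plus the uniform estimate $\bigl|\,\|\ev_q(a)\|-\|\ev_q(a')\|\,\bigr|\le\|a-a'\|$ from contractivity of the $*$-homomorphisms $\ev_q$, and isometry of the injective product map into $\ell^\infty(Q,B)$) is likewise sound, so no gap remains.
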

\begin{myfact}
If $C_0(Q)\subset A$ is a central non-degenerate inclusion, and
$\Delta:A\to C_0(Q)\otimes B$ is an injective $*$-homomorphism, which is $C_0(Q)$-linear
(i.e. $\Delta(fa)=(f\otimes 1)\Delta(a)$, $\forall\,f\in C_0(Q),\,a\in A$), then
$$p^\text{\rm unif}_q(a)=\|(\text{\rm ev}_q\circ\Delta)(a)\|,\,\,\,
\forall\,a\in A,\,q\in Q.$$
In particular, the map $Q\ni q\longmapsto p^\text{\rm unif}_q(a)\in [0,\infty)$ is continuous, for each $a\in A$.
\end{myfact}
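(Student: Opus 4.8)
The plan is to prove the asserted equality $p^{\text{unif}}_q(a)=\|(\ev_q\circ\Delta)(a)\|$ by identifying the kernel of the $*$-homomorphism $\ev_q\circ\Delta:A\to B$ with the ideal $J^{\text{unif}}_q$. Once this is achieved, $\ev_q\circ\Delta$ descends to an \emph{injective} $*$-homomorphism $A/J^{\text{unif}}_q\hookrightarrow B$, which is therefore isometric, and the desired equality reads off from the quotient-norm description of $p^{\text{unif}}_q$ in \eqref{punif=inf}. The continuity of $q\mapsto p^{\text{unif}}_q(a)$ will then follow by applying the preceding Fact to the single element $\Delta(a)\in C_0(Q)\otimes B$. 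Throughout I would use the standard identification $C_0(Q)\otimes B\cong C_0(Q,B)$, under which $\ev_q$ becomes the evaluation $F\mapsto F(q)$, together with the fact that the injective $*$-homomorphism $\Delta$ is isometric.

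The first (easy) inclusion $J^{\text{unif}}_q\subset\ker(\ev_q\circ\Delta)$ I would obtain directly from $C_0(Q)$-linearity: for $f\in C_{0,q}(Q)$ and $a\in A$ one has $\Delta(fa)=(f\otimes 1)\Delta(a)$, whence $\ev_q(\Delta(fa))=f(q)\,\Delta(a)(q)=0$. Since $J^{\text{unif}}_q=\overline{C_{0,q}(Q)A}$ (by centrality and non-degeneracy) and the kernel is closed, this yields the inclusion; in particular $\ev_q\circ\Delta$ factors through $A/J^{\text{unif}}_q$, giving the contractive half $\|(\ev_q\circ\Delta)(a)\|\le p^{\text{unif}}_q(a)$.

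The substantive step is the reverse inclusion $\ker(\ev_q\circ\Delta)\subset J^{\text{unif}}_q$. Here I would fix $a\in A$ with $\Delta(a)(q)=0$ and a canonical approximate unit $(g_\lambda)_\lambda\subset C_{0,q}(Q)$ with $0\le g_\lambda\le 1$ and $g_\lambda\to 1$ uniformly on compact subsets of $Q\smallsetminus\{q\}$. The key computation is that $g_\lambda\,\Delta(a)\to\Delta(a)$ in $C_0(Q,B)$: this uses the continuity and vanishing at $q$ of $\Delta(a)$ (so that $\|\Delta(a)(t)\|$ is small on a neighborhood of $q$, where $1-g_\lambda$ need not be small) together with the $C_0$-decay of $\Delta(a)$ (so that $\|\Delta(a)(t)\|$ is small off a compact set, while $g_\lambda\to 1$ uniformly on the remaining compact portion). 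By $C_0(Q)$-linearity $g_\lambda\Delta(a)=\Delta(g_\lambda a)$, and since $\Delta$ is isometric I conclude $g_\lambda a\to a$; as each $g_\lambda a\in C_{0,q}(Q)A\subset J^{\text{unif}}_q$ and $J^{\text{unif}}_q$ is closed, $a\in J^{\text{unif}}_q$.

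Combining the two inclusions gives $\ker(\ev_q\circ\Delta)=J^{\text{unif}}_q$, hence the isometric factorization and the equality $p^{\text{unif}}_q(a)=\|(\ev_q\circ\Delta)(a)\|$; continuity in $q$ then follows from the preceding Fact. The main obstacle I anticipate is exactly the approximation $g_\lambda\Delta(a)\to\Delta(a)$: it is the one place where the two defining features of $\Delta(a)$ as an element of $C_0(Q,B)$ — continuity with value $0$ at $q$, and vanishing at infinity — must be exploited simultaneously, and where the hypotheses that $\Delta$ is both injective (hence isometric) and $C_0(Q)$-linear are used to transport the conclusion back from $C_0(Q,B)$ to $A$.
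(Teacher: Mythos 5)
Your proof is correct, and it is worth noting that the paper itself offers no argument at all for this Fact: it is declared ``elementary'' inside the proof of Theorem \ref{t-g-cont-thm}, the intended derivation evidently being the machinery already set up in Section \ref{central-sec}. The shortest in-paper route is via Lemma \ref{usc-uniqueness}: put $\mathbf{p}_q(a)=\|(\ev_q\circ\Delta)(a)\|$; then hypothesis $(b)$ of that Lemma is exactly your $C_0(Q)$-linearity computation $\ev_q(\Delta(fa))=f(q)\Delta(a)(q)$, hypothesis $(a)$ follows from Fact 1 combined with the isometry of the injective $*$-homomorphism $\Delta$ (namely $\sup_q\mathbf{p}_q(a)=\|\Delta(a)\|=\|a\|$), and the upper-semicontinuity requirement \eqref{boldp=usc} holds because Fact 1 makes $q\longmapsto\mathbf{p}_q(a)$ honestly continuous; Lemma \ref{usc-uniqueness} then yields $\mathbf{p}_q=p^{\text{unif}}_q$ at every point, and continuity follows as in your last step. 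Your route is genuinely different and self-contained: instead of comparing seminorms you compute the kernel exactly, proving $\ker(\ev_q\circ\Delta)=J^{\text{unif}}_q$, and the hard inclusion is handled soundly --- the crucial observation that for $\Delta(a)(q)=0$ the set $\{t\in Q:\|\Delta(a)(t)\|\geq\varepsilon\}$ is a \emph{compact subset of} $Q\smallsetminus\{q\}$ (closedness and $C_0$-decay give compactness, vanishing at $q$ excludes $q$) is precisely what makes $g_\lambda\Delta(a)\to\Delta(a)$ work, and transporting this back through the isometric, $C_0(Q)$-linear $\Delta$ puts $a$ in $\overline{C_{0,q}(Q)A}=J^{\text{unif}}_q$. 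What your approach buys beyond the Lemma \ref{usc-uniqueness} shortcut is strictly more information: the identification of $\ker(\ev_q\circ\Delta)$ with $J^{\text{unif}}_q$ gives an isometric embedding $A/J^{\text{unif}}_q\hookrightarrow B$, from which the stated equality is immediate via the quotient-norm description in \eqref{punif=inf}; what the paper's route buys is brevity, since the $\varepsilon$-and-compact-set argument you run by hand is, in effect, re-proving the relevant half of Lemma \ref{usc-uniqueness} in this special case. One cosmetic remark: the identity $J^{\text{unif}}_q=\overline{C_{0,q}(Q)A}$ needs only centrality (two-sidedness) plus the fact that $C_{0,q}(Q)\subset\overline{C_{0,q}(Q)A}$ via an approximate unit of $C_{0,q}(Q)$; non-degeneracy is not actually required there.
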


As for the second statement from (ii), the equality $p^\text{other}_q=p
^{\mathbb{E}_\text{red}}_q$ follows from Fact 2, applied to the inclusion 
$C^*_\text{red}(\text{IntIso}(\mathcal{G}))\hookrightarrow C_0(Q)\otimes C^*_\text{red}(G)$
obtained from Theorem \ref{thm-Delta-incl}
(by composing $\Delta_\text{red}$ with the isomorphism $C^*_\text{red}(\text{IntIso}(\mathcal{G}))\xrightarrow{\Upsilon_\text{red}}\overline{\mathscr{A}_\text{red}}$), which implies that the group algebra 
inclusion $\mathbb{C}[G^\circ _q]\subset \mathbb{C}[G]$ extends to a $C^*$-algebra inclusion
$\boldsymbol{j}_q:C^*_{\text{other},q}(G^\circ _q)\hookrightarrow C^*_\text{red}(G)$.
(Since we always have an inclusion $C^*_\text{red}(G^\circ _q)\hookrightarrow C^*_\text{red}(G)$, and
$\boldsymbol{j}_q$ factors through the quotient $*$-homomorphism $\boldsymbol{\kappa}_q:C^*_{\text{other},q}(G^\circ _q)
\to C^*_\text{red}(G^\circ _q)$, the injectivity of  $\boldsymbol{j}_q$ indeed forces $\boldsymbol{\kappa}_q$ to be isometric.)
\end{proof}

Since all units are in transformation groupoids are \nagyrev{units of continuous reduction relative to 
$\mathbb{E}_\text{red}$}, when we adapt Theorem \ref{simplicity-for-red} to this setting, we obtain the following.

\begin{corollary}\label{simplicity-for-cpred}
If there is some $q_0\in Q$, such that the group $G^\circ _{q_0}$ is a $C^*$-simple, then the following conditions are equivalent
\begin{itemize}
\item[(i)] the action $G\curvearrowright Q$ is minimal;
\item[(ii)] the reduced crossed product  $C_0(Q)\rtimes_\text{\rm red}G$ is a simple $C^*$-algebra.
\end{itemize}
\end{corollary}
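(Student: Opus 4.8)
The plan is to obtain this corollary as a direct translation of Theorem \ref{simplicity-for-red} through the transformation-groupoid dictionary assembled in this section. Concretely, I would work with the \'etale transformation groupoid $\mathcal{G} = G \times Q$ and use the $*$-isomorphism $\Upsilon_{\text{red}}:C^*_{\text{red}}(\mathcal{G}) \to C_0(Q)\rtimes_{\text{red}}G$ to identify simplicity of the reduced crossed product $C_0(Q)\rtimes_{\text{red}}G$ with simplicity of $C^*_{\text{red}}(\mathcal{G})$, and the dictionary recorded after Remark \ref{GV-rem} to identify minimality of the action $G \curvearrowright Q$ with minimality of $\mathcal{G}$. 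Under these identifications, conditions (i) and (ii) of the corollary become precisely conditions (i) and (ii) of Theorem \ref{simplicity-for-red}, so it only remains to check that the hypothesis of the corollary implies the hypothesis of that theorem.

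The key step -- and the reason the corollary reads more cleanly than its groupoid sibling -- is the continuity afforded by Theorem \ref{t-g-cont-thm}(ii): for a transformation groupoid every unit $q \in Q$ is a point of continuous reduction relative to $\mathbb{E}_{\text{red}}$, so $\mathcal{G}^{(0)}_{\text{r-cont}} = Q = \mathcal{G}^{(0)}$. This is what neutralizes the restriction ``$u_0 \in \mathcal{G}^{(0)}_{\text{r-cont}}$'' present in Theorem \ref{simplicity-for-red}: in the transformation-groupoid setting that restriction is vacuous. Combining this with Remark \ref{GV-rem}, which identifies $\text{IntIso}(\mathcal{G})q$ with the interior stabilizer group $G^\circ_q$ for each $q$, I see that the hypothesis of the corollary -- that $G^\circ_{q_0}$ is $C^*$-simple for some $q_0 \in Q$ -- is exactly the statement that $\text{IntIso}(\mathcal{G})u_0$ is $C^*$-simple for the unit $u_0 = q_0 \in \mathcal{G}^{(0)}_{\text{r-cont}}$. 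Thus the hypothesis of Theorem \ref{simplicity-for-red} is met.

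With the hypothesis verified, I would simply invoke Theorem \ref{simplicity-for-red} to conclude the equivalence of (i) and (ii), and then transport it back across the identifications above. I do not anticipate a genuine obstacle here: the entire content is bookkeeping to confirm that the three dictionary entries (reduced crossed product $\leftrightarrow$ reduced groupoid $C^*$-algebra, minimal action $\leftrightarrow$ minimal groupoid, interior stabilizer $\leftrightarrow$ interior isotropy) line up with the hypotheses and conclusions of the source theorem. The only point that requires care is to ensure that the $C^*$-simplicity hypothesis is imposed on the correct group, namely $G^\circ_{q_0} = \text{IntIso}(\mathcal{G})q_0$ rather than on the full stabilizer $G_{q_0}$; this is guaranteed by Remark \ref{GV-rem}, and it is precisely where the passage from isotropy to \emph{interior} isotropy matters.
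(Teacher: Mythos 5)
Your proposal is correct and is essentially identical to the paper's own argument: the paper derives this corollary by invoking Theorem \ref{simplicity-for-red} after noting (via Theorem \ref{t-g-cont-thm}(ii)) that every unit of a transformation groupoid is a unit of continuous reduction relative to $\mathbb{E}_{\text{red}}$, together with the identifications $\text{IntIso}(\mathcal{G})q = G^\circ_q$ from Remark \ref{GV-rem} and the minimal-action/minimal-groupoid dictionary. Your bookkeeping of the three dictionary entries, including the caution that the $C^*$-simplicity hypothesis lands on the interior stabilizer rather than the full stabilizer, matches the paper's intended reasoning exactly.
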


\begin{mycomment}
The above result offers a slight improvement to a Theorem of Ozawa (\cite[Thm. 14(1)]{Ozawa}), in which our hypothesis 
-- $C^*$-simplicity of some interior stabilizer $G^\circ _{q_0}$ -- is replaced by a stronger condition: the $C^*$-simplicity of some full stabilizer group $G_{q_0}$. (After all, for any $q\in Q$, we know that $G^\circ _q$ is a normal subgroup of $G_q$; also it is well known that normal subgroups of $C^*$-simple groups are also $C^*$-simple.)
\end{mycomment}

Once again, since all units are continuously reduced, when adapted to crossed products,
the hypothesis from Proposition \ref{red-simple-converse-other} is indistinguishable to the one 
from Corollary \ref{red-simple-converse-amenable}, so our non-simplicity criterion for crossed products is stated as follows (compare to \cite[Thm. 14(2)]{Ozawa})

\begin{corollary}\label{ozawa-sharpening}
If the reduced crossed product  $C_0(Q)\rtimes_\text{\rm red}G$ is a simple $C^*$-algebra, and there is some $q_0\in Q$, such that the group $G^\circ _{q_0}$ is amenable, then 
$G^\circ _q=\{e\}$, $\forall\,q\in Q$, i.e. the action $G\curvearrowright Q$ is topologically free.
\end{corollary}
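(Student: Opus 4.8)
The plan is to derive this as a direct specialization of Corollary~\ref{red-simple-converse-amenable} (equivalently, of Proposition~\ref{red-simple-converse-other}) to the transformation groupoid $\mathcal{G}=G\times Q$, using the identifications established in Remark~\ref{intiso-cross} and Remark~\ref{GV-rem}. First I would invoke the $*$-isomorphism $\Upsilon_{\text{red}}:C^*_{\text{red}}(\mathcal{G})\to C_0(Q)\rtimes_{\text{red}}G$, so that simplicity of the reduced crossed product is the same as simplicity of $C^*_{\text{red}}(\mathcal{G})$, together with the identification $\text{IntIso}(\mathcal{G})q=G^\circ_q$ from Remark~\ref{GV-rem}. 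These translate the hypothesis ``$G^\circ_{q_0}$ amenable'' into ``$\text{IntIso}(\mathcal{G})q_0$ amenable,'' and the desired conclusion ``$G^\circ_q=\{e\}$ for all $q$'' into ``$\text{IntIso}(\mathcal{G})u=\{e\}$ for all units $u\in\mathcal{G}^{(0)}=Q$.''

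With these translations in place, the core of the argument is immediate: the hypotheses are exactly those of Corollary~\ref{red-simple-converse-amenable} applied to $\mathcal{G}$, so that corollary yields $\text{IntIso}(\mathcal{G})u=\{e\}$ for every $u$. I would also record why the corollary's amenability hypothesis and the a priori weaker continuity hypothesis of Proposition~\ref{red-simple-converse-other} collapse to the same condition here, this being the point stressed in the preceding Comment: by Theorem~\ref{t-g-cont-thm}(ii) every $q\in Q$ is a unit of continuous reduction relative to $\mathbb{E}_{\text{red}}$, so $\|\,.\,\|_{\text{other},q}$ and $\|\,.\,\|_{\text{red}}$ agree on $\mathbb{C}[G^\circ_q]$; hence, by Remark~\ref{proper-amenable}, continuity of the augmentation homomorphism $\mathbb{C}[G^\circ_{q_0}]\to\mathbb{C}$ relative to $\|\,.\,\|_{\text{other},q_0}$ is equivalent to amenability of $G^\circ_{q_0}$. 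This makes the hypothesis here interchangeable with that of Proposition~\ref{red-simple-converse-other}, so either result can be cited.

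Finally I would translate the conclusion $\text{IntIso}(\mathcal{G})u=\{e\}$ for all $u$ into topological freeness, reusing the two dictionary observations recorded after Corollary~\ref{full-simple-thm} and before Corollary~\ref{full-simple-cross-prod-thm}: using the density of $\mathcal{G}^{(0)}_\circ$ in $\mathcal{G}^{(0)}$, the condition $\text{IntIso}(\mathcal{G})u=\{e\}$ for all $u$ is equivalent to $\mathcal{G}$ being topologically principal, which for a transformation groupoid is precisely topological freeness of $G\curvearrowright Q$, i.e. density of $\{q\in Q:G_q=\{e\}\}$ in $Q$.

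I do not anticipate a genuine obstacle, since all the real work is carried out in the earlier results (the chain Proposition~\ref{aug-ideal} $\to$ Proposition~\ref{red-simple-converse-other} $\to$ Corollary~\ref{red-simple-converse-amenable}, together with the continuity Theorem~\ref{t-g-cont-thm}). The only step requiring mild care is the final passage from pointwise triviality of the interior isotropy to topological freeness, which rests on the density of $\mathcal{G}^{(0)}_\circ$ and is exactly where one must not conflate triviality of $\text{IntIso}(\mathcal{G})u$ (obtained at \emph{every} $u$) with triviality of the full isotropy $\text{Iso}(\mathcal{G})u$ (which holds only on a dense subset).
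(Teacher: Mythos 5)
Your proposal is correct and follows exactly the route the paper intends: Corollary \ref{ozawa-sharpening} is stated there without a separate proof, as an immediate specialization of Corollary \ref{red-simple-converse-amenable} (equivalently, via Theorem \ref{t-g-cont-thm}(ii) and Remark \ref{proper-amenable}, of Proposition \ref{red-simple-converse-other}) to the transformation groupoid, using the identifications $\Upsilon_{\text{red}}$ and $\text{IntIso}(\mathcal{G})q=G^\circ_q$ and the dictionary equating trivial interior isotropy with topological freeness via the density of $\mathcal{G}^{(0)}_\circ$. Your closing caution about not conflating triviality of $\text{IntIso}(\mathcal{G})u$ at every unit with triviality of the full isotropy $\text{Iso}(\mathcal{G})u$ is exactly the right point of care, and your write-up matches the paper's argument in all essentials.
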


\section*{Appendix: On the inclusion \texorpdfstring{$C^*(\text{IntIso}(\mathcal{G}))\subset C^*(\mathcal{G})$}{C*(IntIso(G)) subset C*(G)}}
\setcounter{oldsec}{\value{section}}
\setcounter{section}{1}
\renewcommand\thesection{\Alph{section}}
\setcounter{theorem}{0}
\setcounter{appresult}{0}

As in Section \ref{grp-sec}, we fix an \'etale Hausdorff, second countable groupoid $\mathcal{G}$, so that its full 
$C^*$-algebra is the completion of $(C_c(\mathcal{G}),\times,{}^*)$ in the full norm 
$\|\,.\,\|_{\text{full}}$ defined in \eqref{full-norm}.  

In our analysis of the isotropy groupoid, we will use the well known identification between the following three spaces, associated with a discrete group $H$:
\begin{itemize}
\item[(i)] the space of positive linear functionals on the full group $C^*$-algebra $C^*(H)$;
\item[(ii)] the space of positive linear functionals on the group algebra $\phi:\mathbb{C}[H]\to\mathbb{C}$, i.e. those satisfying $\phi(f^*\times f)\geq 0$, $\forall\,f\in \mathbb{C}[H]$;
\item[(iii)] the space of  {\em positive definite  functions on
$H$}, i.e. those functions $\theta:H\to\mathbb{C}$, with the property that:
{\em for any finite set $\{h_1,\dots,h_n\}\subset H$, the matrix $[\theta(h_i^{-1}h_j)]_{i,j=1}^n\in M_n$ is 
positive}.
\end{itemize}

This correspondence assigns to every positive definite $\theta$ the positive linear functional
$\mathbb{C}[H]\ni f\longmapsto\sum_{h\in H}\theta(h)f(h)\in\mathbb{C}$, which turns out to be 
$\|\,.\,\|_{\text{full}}$-bounded, thus it extends to a unique positive linear functional on  $C^*(H)$.

Another well know elementary fact, is that, whenever $H_0\subset H$ is a subgroup, and $\theta_0$
 is a positive definite function on $H_0$, the function $\theta:H\to\mathbb{C}$ given by
$$\theta(h)=\left\{\begin{array}{l}\theta_0(h)\text{, if }h\in H_0\\ 0\text{, otherwise}\end{array}\right.$$
is again a positive definite function on $H$. Using this, one obtains the well known fact that the canonical inclusion $\mathbb{C}[H_0]\subset \mathbb{C}[H]$ gives rise to two $C^*$-inclusions of group $C^*$-algebras
$C^*(H_0)\subset C^*(H)$ and 
$C^*_{\text{red}}(H_0)\subset C^*_{\text{red}}(H)$. (These can also be justified quickly using
 Proposition \ref{embed-crit}.)

The next two results are well known, but due to their elementary nature, we supply them with proofs.

\begin{applemma}(a variation of \cite[Prop.4.2]{Renault-JOT87})
\label{app-positivity}
If $\phi:C_c(\mathcal{G})\to\mathbb{C}$ is a linear functional, which is positive, in the sense that
\begin{equation}
\phi(f^*\times f)\geq 0,\,\,\,\forall\,f\in C_c(\mathcal{G}),
\label{app-phi-pos}
\end{equation}
then $\phi$ also satisfies the inequalities
\begin{equation}
\phi(f^*\times h^*\times h\times f)\leq \|h\|_{\text{\rm full}}^2\cdot
\phi(f^*\times f),\,\,\,\forall\,f,h\in C_c(\mathcal{G}).
\label{app-phi-bdd}
\end{equation}
In particular, if $\phi\big|_{C_c(\mathcal{G}^{(0)})}$ is $\|\,.\,\|_\infty$-bounded, then
$\phi$ is $\|\,.\,\|_{\text{\rm full}}$-continuous, thus it extends to a positive linear functional
on $C^*(\mathcal{G})$.
\end{applemma}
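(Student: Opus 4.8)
The plan is to run a GNS-type construction for $\phi$ and to show that the resulting left-regular representation is by \emph{bounded} operators; once this is done, the defining supremum in \eqref{full-norm} delivers \eqref{app-phi-bdd} for free. Starting from the positivity hypothesis \eqref{app-phi-pos}, I would equip $C_c(\mathcal{G})$ with the pre-inner product $\langle g_1,g_2\rangle=\phi(g_1^*\times g_2)$, let $N=\{g\in C_c(\mathcal{G}):\phi(g^*\times g)=0\}$ be its null space, and consider the left-multiplication maps $\lambda(h):g+N\longmapsto (h\times g)+N$. A routine computation gives $\langle\lambda(h)g_1,g_2\rangle=\langle g_1,\lambda(h^*)g_2\rangle$ and $\lambda(h_1\times h_2)=\lambda(h_1)\lambda(h_2)$, so $\lambda$ will be a $*$-representation of $C_c(\mathcal{G})$ on the pre-Hilbert space $C_c(\mathcal{G})/N$, \emph{provided} each $\lambda(h)$ is well defined and bounded.

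The main step, and the one I expect to be the real obstacle, is to prove that every $\lambda(h)$ is bounded; here I would localize on bisections. If $h\in C_c(\mathcal{B})$ for an open bisection $\mathcal{B}$, then by \eqref{sup-prod}--\eqref{sup-inv} the element $w:=h^*\times h$ is supported in $\mathcal{B}^{-1}\mathcal{B}\subset\mathcal{G}^{(0)}$, so $w$ is a nonnegative function in $C_c(\mathcal{G}^{(0)})$ with $M:=\|w\|_\infty<\infty$. For a fixed $g\in C_c(\mathcal{G})$ I would choose a cutoff $\chi\in C_c(\mathcal{G}^{(0)})$ with $0\le\chi\le1$ equal to $1$ on the compact set $\text{supp}\,w\cup r(\text{supp}\,g)$. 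Then $M\chi-w\ge 0$ in $C_c(\mathcal{G}^{(0)})$, so $c:=(M\chi-w)^{1/2}\in C_c(\mathcal{G}^{(0)})$ is again compactly supported and $w=M\chi-c\times c$. Since $\chi\equiv 1$ on $r(\text{supp}\,g)$, a one-line support computation gives $g^*\times\chi=g^*$, hence $\phi(g^*\times\chi\times g)=\phi(g^*\times g)$, and therefore
\begin{equation*}
\|\lambda(h)g\|^2=\phi(g^*\times w\times g)=M\,\phi(g^*\times\chi\times g)-\phi\big((c\times g)^*\times(c\times g)\big)\le M\,\phi(g^*\times g).
\end{equation*}
This bound $\|\lambda(h)g\|\le M^{1/2}\|g\|$ holds uniformly in $g$ (so $\lambda(h)$ is well defined on the quotient and bounded). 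For a general $h\in C_c(\mathcal{G})$ I would take a finite partition of unity subordinate to a cover of $\text{supp}\,h$ by open bisections to write $h=\sum_{i=1}^N h_i$ with each $h_i$ bisection-supported, and conclude $\|\lambda(h)\|\le\sum_i\|\lambda(h_i)\|<\infty$.

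With boundedness in hand, $\lambda$ is a genuine $*$-representation of $C_c(\mathcal{G})$ by bounded operators, so by the very definition \eqref{full-norm} of the full norm we obtain $\|\lambda(h)\|\le\|h\|_{\text{full}}$. Evaluating at the vector $f+N$ then yields
\begin{equation*}
\phi(f^*\times h^*\times h\times f)=\|\lambda(h)(f+N)\|^2\le\|h\|_{\text{full}}^2\,\|f+N\|^2=\|h\|_{\text{full}}^2\,\phi(f^*\times f),
\end{equation*}
which is exactly \eqref{app-phi-bdd}. I want to stress that this last step is where the finiteness of $\|h\|_{\text{full}}$ (Renault's Disintegration Theorem) enters, and that it is essential that $\|\cdot\|_{\text{full}}$ is the \emph{maximal} $C^*$-norm: for a strictly smaller $C^*$-seminorm the analogous inequality can fail.

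Finally, for the ``in particular'' clause I would assume $|\phi(u)|\le C\|u\|_\infty$ for all $u\in C_c(\mathcal{G}^{(0)})$, and given $h\in C_c(\mathcal{G})$ pick $u,v\in C_c(\mathcal{G}^{(0)})$ with $0\le u,v\le 1$, $u\equiv 1$ on $r(\text{supp}\,h)$ and $v\equiv 1$ on $s(\text{supp}\,h)$, so that $u\times h=h=h\times v$. Cauchy--Schwarz for $\phi$ gives $|\phi(h)|^2=|\phi(u^*\times h)|^2\le\phi(u^2)\,\phi(h^*\times h)\le C\,\phi(h^*\times h)$, while $h=h\times v$ together with \eqref{app-phi-bdd} gives $\phi(h^*\times h)=\phi(v^*\times h^*\times h\times v)\le\|h\|_{\text{full}}^2\,\phi(v^2)\le C\,\|h\|_{\text{full}}^2$. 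Combining, $|\phi(h)|\le C\,\|h\|_{\text{full}}$, so $\phi$ is $\|\cdot\|_{\text{full}}$-continuous and extends to a bounded functional on $C^*(\mathcal{G})$, whose positivity is immediate from continuity and the density of $C_c(\mathcal{G})$.
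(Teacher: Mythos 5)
Your proof is correct and follows essentially the same route as the paper's: you localize on open bisections, complete the square against a cutoff function in $C_c(\mathcal{G}^{(0)})$ to bound the left-multiplication operators (the paper's Claim, with your $M=\|h^*\times h\|_\infty$ matching its constant $C_h=\|h\|_\infty^2$), and then invoke the GNS-type representation on the separated completion together with the definition \eqref{full-norm} of $\|\,.\,\|_{\text{full}}$. The only cosmetic differences are that you assemble general $h$ via the triangle inequality for operator norms where the paper uses a Cauchy--Bunyakovsky--Schwarz estimate before bounding each piece, and that you write out the Cauchy--Schwarz argument for the ``in particular'' clause, which the paper leaves implicit.
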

\begin{proof}
The key step is the following slightly weaker version of 
\eqref{app-phi-bdd}:

\begin{claim}
For every $h\in C_c(\mathcal{G})$, there exists some constant $C_h\geq 0$, such that
\begin{equation}
\phi(f^*\times h^*\times h\times f)\leq C_h\cdot
\phi(f^*\times f),\,\,\,\forall\,f,h\in C_c(\mathcal{G}).
\label{app-phi-bdd-easy}
\end{equation}
\end{claim}
\noindent
Using the fact that $C_c(\mathcal{G})$ is spanned by functions supported on open bisections, and 
 Cauchy-Bunyakovsky-Schwarz -- which implies 
$$\phi(f^*\times(\sum_{j=1}^m h_j)^*\times(\sum_{j=1}^m h_j)\times f)
\leq m^2\sum_{j=1}^m\phi(f^*\times h_j^*\times h_j\times f),$$ 
it suffices to prove the Claim under the additional assumption that $h\in C_c(\mathcal{B})$, for some open bisection $\mathcal{B}\subset \mathcal{G}$.
But in this case, $h^*\times h$ belongs to $C_c(\mathcal{G}^{(0)})$, so by replacing $h$ with 
$(h^*\times h)^{1/2}$ we can in fact assume that $h\in C_c(\mathcal{G}^{(0)})$. But now if we take a
function $k\in C_c(\mathcal{G}^{(0)})$, such that $0\leq k\leq 1$ and $k\big|_{r(\text{supp}\,f)\cup \text{supp}\,h}=1$, then $k\times f=f$, so the function $\|h\|^2_\infty k^*\times k-h^*\times h\in C_c(\mathcal{G}^{(0)})$ is positive, so if we take $g$ to be its square root, we have the identity
$h^*\times h+g^*\times g=\|h\|_\infty^2 k^*\times k$, which yields:
$$f^*\times h^*\times h\times f+
f^*\times g^*\times g\times f
=\|h\|_\infty^2 f^*\times f,$$
which by positivity implies \eqref{app-phi-bdd-easy} with $C_h=\|h\|_\infty ^2$.

Having justified the Claim, we can conclude that, by considering the separate completion $\mathscr{H}$ of $C_c(\mathcal{G})$
in the seminorm given by the inner product $\langle f|g\rangle_\phi=\phi(f^*\times g)$, the left multiplication operators
$L_f:C_c(\mathcal{G}) \ni g\longmapsto f\times g\in C_c(\mathcal{G})$ give rise to a $*$-representation
$\pi:C_c(\mathcal{G})\to\mathscr{B}(\mathscr{H})$, so by the definition of $\|\,.\,\|_{\text{\rm full}}$ we must have $\|L_f\|\leq\|f\|_{\text{full}}$.
\end{proof}
\begin{applemma}\label{app-ueta}
Given a unit $u\in\mathcal{G}^{(0)}$ and a positive definite function $\theta$ on 
the isotropy group $u\mathcal{G}u$,
the linear map $\eta_\theta:C_c(\mathcal{G})\ni f\longmapsto \sum_{\gamma\in u\mathcal{G}u}\theta(\gamma)f(\gamma)\in\mathbb{C}$
 is positive, in the sense of \eqref{app-phi-pos}. {\rm (For each $f\in C_c(\mathcal{G})$, the sum that defines 
$\eta_\theta(f)$ has only finitely many non-zero terms.)}
\end{applemma}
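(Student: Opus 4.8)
The plan is to expand $\eta_\theta(f^*\times f)$ directly from the convolution and adjoint formulas, and then reorganize the resulting (finite) sum so that the positive-definiteness of $\theta$ becomes manifest. Throughout, the sums in question are finite: since $\mathcal{G}$ is \'etale the source fiber $\mathcal{G}u=s^{-1}(u)$ is discrete, and as $f\in C_c(\mathcal{G})$ has compact support, only finitely many $\delta\in\mathcal{G}u$ have $f(\delta)\neq 0$; this is what licenses the reindexing below.

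First I would unwind the definitions. For $\gamma\in u\mathcal{G}u$ one has $(f^*\times f)(\gamma)=\sum_{\alpha\beta=\gamma}\overline{f(\alpha^{-1})}f(\beta)$, and since $r(\gamma)=s(\gamma)=u$, every contributing pair satisfies $r(\alpha)=u$ and $s(\beta)=u$. Hence summing $\theta(\gamma)(f^*\times f)(\gamma)$ over all $\gamma\in u\mathcal{G}u$ amounts to summing over all composable pairs $(\alpha,\beta)$ with $r(\alpha)=u$ and $s(\beta)=u$. After the substitution $\delta=\alpha^{-1}$ (so $\delta\in\mathcal{G}u$ and $s(\alpha)=r(\delta)$, whence the composability condition becomes $r(\delta)=r(\beta)$, and $f^*(\alpha)=\overline{f(\delta)}$), this yields
\[
\eta_\theta(f^*\times f)=\sum_{\substack{\delta,\beta\in\mathcal{G}u\\ r(\delta)=r(\beta)}}\theta(\delta^{-1}\beta)\,\overline{f(\delta)}\,f(\beta).
\]

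Next I would partition the source fiber by the range map, writing $\mathcal{G}u=\bigsqcup_{v}v\mathcal{G}u$. The constraint $r(\delta)=r(\beta)$ forces $\delta$ and $\beta$ into a common fiber $v\mathcal{G}u$, so
\[
\eta_\theta(f^*\times f)=\sum_{v}\Big(\sum_{\delta,\beta\in v\mathcal{G}u}\theta(\delta^{-1}\beta)\,\overline{f(\delta)}\,f(\beta)\Big),
\]
and it suffices to show each inner sum is nonnegative. For a fixed $v$ with $v\mathcal{G}u\neq\varnothing$, choose a base point $\delta_0\in v\mathcal{G}u$; then $g\longmapsto\delta_0 g$ is a bijection of the isotropy group $u\mathcal{G}u$ onto $v\mathcal{G}u$, and a short computation gives $(\delta_0 g)^{-1}(\delta_0 h)=g^{-1}h$. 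Setting $c_g=f(\delta_0 g)$ for $g\in u\mathcal{G}u$ (a finitely supported family), the inner sum becomes $\sum_{g,h\in u\mathcal{G}u}\overline{c_g}\,\theta(g^{-1}h)\,c_h$, which is precisely the quadratic form attached to the matrix $[\theta(g^{-1}h)]$. By positive-definiteness of $\theta$ this is $\geq 0$, so the whole sum is $\geq 0$, proving that $\eta_\theta$ is positive.

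I do not expect a genuinely difficult step here; the one place that demands care is the bookkeeping in the first rearrangement --- correctly propagating the range/source constraints through the substitution $\delta=\alpha^{-1}$ and confirming that the triple indexing $(\gamma,\alpha,\beta)$ collapses to composable pairs in a single fiber $v\mathcal{G}u$ --- together with the (routine, via \'etaleness and compact support) verification that all sums are finite so that the reindexing and the base-point bijection are valid.
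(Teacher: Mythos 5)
Your argument is correct: the reindexing of $\eta_\theta(f^*\times f)$ as the sum of $\theta(\delta^{-1}\beta)\,\overline{f(\delta)}\,f(\beta)$ over pairs $\delta,\beta\in\mathcal{G}u$ with $r(\delta)=r(\beta)$ is valid (the substitution $\delta=\alpha^{-1}$ propagates the range/source constraints exactly as you state, and all sums are finite since $\mathcal{G}u$ is discrete and $f$ has compact support), the partition $\mathcal{G}u=\bigsqcup_v v\mathcal{G}u$ is compatible with the constraint $r(\delta)=r(\beta)$, and the base-point translation $g\longmapsto\delta_0 g$, with $(\delta_0 g)^{-1}(\delta_0 h)=g^{-1}h$, turns each inner sum into the finite quadratic form $\sum_{g,h}\overline{c_g}\,\theta(g^{-1}h)\,c_h$, which is nonnegative by the very definition of positive definiteness. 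This is, however, a genuinely more direct route than the paper's. The paper instead works at the level of the convolution $*$-algebra: it lists the range values $u_1,\dots,u_k$ of the finite set $\mathrm{supp}\,f\cap\mathcal{G}u$, picks one element $\gamma_j$ over each $u_j$ together with a normalizer $n_j\in C_c(\mathcal{B}_j)$ on a bisection through $\gamma_j$ (shrinking so the $r(\mathcal{B}_j)$ are disjoint), uses \eqref{nfn*} to decompose $(f^*\times f)\big|_{u\mathcal{G}u}=\sum_j\big((n_j\times f)^*\times(n_j\times f)\big)\big|_{u\mathcal{G}u}$, and thereby reduces to the special case $\mathrm{supp}\,f\cap\mathcal{G}u\subset u\mathcal{G}u$, where $\eta_\theta(f^*\times f)=\omega(\hat f^*\times\hat f)\geq 0$ for the positive functional $\omega$ on $\mathbb{C}[u\mathcal{G}u]$ attached to $\theta$. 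Your translation $\delta\longmapsto\delta_0^{-1}\delta$ is precisely the coefficient-level shadow of the paper's multiplication by the normalizers: the two proofs encode the same idea (translate each range fiber of $\mathcal{G}u$ back into the isotropy group), but yours buys a self-contained, purely combinatorial computation with no auxiliary bisection functions, no shrinking argument, and no appeal to \eqref{nfn*}, while the paper's version buys consistency with the normalizer machinery used throughout the paper and packages the final positivity as a group-algebra statement, the form in which it is reused in the proof of the Appendix theorem.
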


\begin{proof}
Let $\{\mathbf{v}_\gamma\}_{\gamma\in u\mathcal{G}u}$ be the standard unitary generators
of the group algebra $\mathbb{C}[u\mathcal{G}u]$ (so that
we can view $\mathbb{C}[u\mathcal{G}u]=\text{span}\{\mathbf{v}_\gamma\}_{\gamma\in
u\mathcal{G}u}$), and let $\omega$ be the positive linear linear map on $\mathbb{C}[u\mathcal{G}u]$ associated with
$\theta$, i.e.
$\omega(\sum_{\gamma\in F}\zeta_\gamma\mathbf{v}_\gamma)=\sum_{\gamma\in F}\theta(\gamma)\zeta_\gamma$
(for all finite sets $F\subset u\mathcal{G}u$).

Fix now some $f\in C_c(\mathcal{G})$, and let us justify the inequality
\begin{equation}
\eta_\theta(f^*\times f)\geq 0.
\label{app-phitheta-pos}
\end{equation}
Consider the finite set $X=\text{supp}\,f\cap\mathcal{G}u$, and assume $X\neq\varnothing$
(otherwise, $\eta_\theta(f^*\times f)=0$, and there is nothing to prove).
Let us list the finite set of units $r(X)=\{u_1,\dots,u_k\}$, and fix, for each $j$, an element $\gamma_j\in X$, such that $r(\gamma_j)=u_j$, along with an open bisection $\mathcal{B}_j\ni\gamma_j$, and some normalizer
$n_j\in C_c(\mathcal{B}_j)$ with $n_j(\gamma_j)=1$. By suitably shrinking these bisections, we can assume that the sets $r(\mathcal{B}_j)$, $j=1,\dots,k$ are disjoint.

Using
\eqref{nfn*}, it is easy to see that
$$(f^*\times f)(\gamma)=\sum_{j=1}^k(f^*\times n_j^*\times n_j\times f)(\gamma),\,\,\,\forall\,\gamma\in 
u\mathcal{G}u,$$
so it suffices to prove \eqref{app-phitheta-pos} with each of $n_j\times f$'s in place of $f$; in other words, it suffices to prove  \eqref{app-phitheta-pos} for a function $f\in C_c(\mathcal{G})$ which has
$\mathcal{G}u\cap \text{supp}\,f\subset u\mathcal{G}u$. However, this special case is trivial, since for any such $f$,
when we consider the element $\hat{f}=\sum_{\gamma\in \text{supp}\,f\cap u\mathcal{G}u} f(\gamma)\mathbf{v}_\gamma
\in\mathbb{C}[u\mathcal{G}u]$,
we have $\eta_\theta(f^*\times f)=\omega(\hat{f}^*\times \hat{f})\geq 0$.
\end{proof}

\begin{apptheorem}
Assume $\mathcal{Y}\subset\text{\rm Iso}(\mathcal{G})$ is an open subgroupoid. 
%
Then the $*$-algebra inclusion 
\begin{equation}
C_c(\mathcal{Y})\subset C_c(\mathcal{G})
\label{appA-thm}
\end{equation}
 is $\|\,.\,\|_{\text{\rm full}}$-isometric, thus it
extends to a $C^*$-inclusion $C^*(\mathcal{Y})\subset C^*(\mathcal{G})$.
\end{apptheorem}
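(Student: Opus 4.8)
The plan is to prove the two inequalities $\|f\|_{\text{full},\mathcal{Y}}\le\|f\|_{\text{full},\mathcal{G}}$ and $\|f\|_{\text{full},\mathcal{G}}\le\|f\|_{\text{full},\mathcal{Y}}$ for every $f\in C_c(\mathcal{Y})$, where the subscripts record in which convolution algebra the full norm is computed. The second inequality is automatic: since $C_c(\mathcal{Y})\subset C_c(\mathcal{G})$ is a $*$-subalgebra, any $*$-representation $\pi$ of $C_c(\mathcal{G})$ restricts to a $*$-representation of $C_c(\mathcal{Y})$, whence $\|\pi(f)\|\le\|f\|_{\text{full},\mathcal{Y}}$; taking the supremum over $\pi$ gives $\|f\|_{\text{full},\mathcal{G}}\le\|f\|_{\text{full},\mathcal{Y}}$. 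The whole content therefore lies in the reverse inequality, and the strategy is to \emph{extend states supported on single isotropy groups to states on $C^*(\mathcal{G})$}, which is exactly what the two preceding Appendix Lemmas supply.

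For the reverse inequality I would first reduce to the isotropy fibers of $\mathcal{Y}$. Being an open subgroupoid of $\text{Iso}(\mathcal{G})$, $\mathcal{Y}$ is an \'etale group bundle with unit space $\mathcal{Y}^{(0)}=\mathcal{Y}\cap\mathcal{G}^{(0)}$, so the results of Section \ref{bundles-sec} apply to the central non-degenerate inclusion $C_0(\mathcal{Y}^{(0)})\subset C^*(\mathcal{Y})$. Combining \eqref{norm-sup-punif} with the identification $C^*(\mathcal{Y})/J^{\text{full}}_u\cong C^*(\mathcal{Y}u)$ yields
\[
\|f\|_{\text{full},\mathcal{Y}}=\sup_{u\in\mathcal{Y}^{(0)}}\big\|\widehat{f}_u\big\|_{C^*(\mathcal{Y}u)},\qquad \widehat{f}_u=\sum_{\gamma\in\mathcal{Y}u}f(\gamma)\mathbf{v}_\gamma\in\mathbb{C}[\mathcal{Y}u],
\]
so it suffices to bound $\|\widehat{f}_u\|_{C^*(\mathcal{Y}u)}$ by $\|f\|_{\text{full},\mathcal{G}}$ for each fixed $u$. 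Using the correspondence between states on $C^*(\mathcal{Y}u)$ and normalized positive definite functions on $\mathcal{Y}u$ recalled at the start of the Appendix, I would write $\|\widehat{f}_u\|^2_{C^*(\mathcal{Y}u)}=\sup_\theta\omega_\theta(\widehat{f}_u^{\,*}\widehat{f}_u)$, the supremum taken over positive definite $\theta$ on $\mathcal{Y}u$ with $\theta(u)=1$.

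Then I would extend each such $\theta$ across the ambient isotropy. Since a positive definite function on a subgroup extends by zero to a positive definite function on the full group, $\theta$ determines a positive definite $\widetilde{\theta}$ on $u\mathcal{G}u$, vanishing off $\mathcal{Y}u$. Lemma \ref{app-ueta} then makes $\eta_{\widetilde\theta}:C_c(\mathcal{G})\ni g\mapsto\sum_{\gamma\in u\mathcal{G}u}\widetilde\theta(\gamma)g(\gamma)$ a positive functional, and since $\eta_{\widetilde\theta}\big|_{C_c(\mathcal{G}^{(0)})}=\text{ev}_u$ is $\|\,.\,\|_\infty$-bounded, Lemma \ref{app-positivity} extends it to a state on $C^*(\mathcal{G})$ (its norm being $1=\lim_\lambda\eta_{\widetilde\theta}(u_\lambda)$ along an approximate unit $(u_\lambda)_\lambda$ in $C_0(\mathcal{G}^{(0)})$). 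Because $f$ is supported in $\mathcal{Y}$, the convolution $f^*\times f$ is supported in $\mathcal{Y}$ as well, and on the fiber $\mathcal{Y}u$ it coincides with the group-algebra product $\widehat{f}_u^{\,*}\widehat{f}_u$; as $\widetilde\theta$ vanishes off $\mathcal{Y}u$, this gives $\eta_{\widetilde\theta}(f^*\times f)=\omega_\theta(\widehat{f}_u^{\,*}\widehat{f}_u)$. Being a state, $\eta_{\widetilde\theta}(f^*\times f)\le\|f^*\times f\|_{\text{full},\mathcal{G}}=\|f\|_{\text{full},\mathcal{G}}^2$; taking the supremum over $\theta$ and then over $u$ yields $\|f\|_{\text{full},\mathcal{Y}}\le\|f\|_{\text{full},\mathcal{G}}$, completing the proof.

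The step I expect to be the main obstacle is the bookkeeping that glues the fiberwise group-$C^*$ picture to the groupoid convolution: verifying that $f^*\times f$ restricted to $\mathcal{Y}u$ really is the group-algebra square $\widehat{f}_u^{\,*}\widehat{f}_u$ (i.e. that the composable pairs contributing to $(f^*\times f)(\gamma)$ for $\gamma\in\mathcal{Y}u$ lie entirely in $\mathcal{Y}u$, which rests on $\mathcal{Y}$ being a subgroupoid), together with the compatibility of the extension-by-zero $\widetilde\theta$ with the defining sum of $\eta_{\widetilde\theta}$. Once these support and fiber identifications are in place, Lemmas \ref{app-positivity} and \ref{app-ueta} perform the essential analytic work of turning isotropy-group states into states on $C^*(\mathcal{G})$, and the remaining estimates are routine.
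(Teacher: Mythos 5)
Your proposal is correct and follows essentially the same route as the paper's proof: the same fiberwise reduction $\|f\|_{\text{full},\mathcal{Y}}=\sup_{u\in\mathcal{Y}^{(0)}}\|f|_{\mathcal{Y}u}\|_{C^*(\mathcal{Y}u)}$ coming from \eqref{norm-sup-punif}, the same extension-by-zero of positive definite functions from $\mathcal{Y}u$ to $u\mathcal{G}u$, and the same two analytic engines, Lemmas \ref{app-ueta} and \ref{app-positivity}. The only difference is cosmetic packaging: you bound $\|\widehat{f}_u\|^2$ directly by evaluating the extended states against $f^*\times f$, whereas the paper assembles the very same functionals $\phi_{(u,\theta)}$ into a family whose pullback is jointly faithful and invokes the injectivity criterion of Remark \ref{embed-rem} --- the two formulations are interchangeable.
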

\begin{proof}
In order to distinguish between the full norms on $C^*(\mathcal{Y})$ and $C^*(\mathcal{G})$, we will denote them by 
$\|\,.\,\|_{C^*(\mathcal{Y})}$ and $\|\,.\,\|_{C^*(\mathcal{G})}$, respectively.

As pointed out in Section \ref{bundles-sec}, for each unit $u\in\mathcal{Y}^{(0)}$,
we have a surjective $*$-homomorphism
$\rho_u:C^*(\mathcal{Y})\to C^*(\mathcal{Y}u)$ (from the full groupoid $C^*$-algebra to the full group $C^*$-algebra),
which when restricted to $C_c(\mathcal{Y})$, acts as:
$$\rho_u:C_c(\mathcal{Y})\ni f\longmapsto f\big|_{\mathcal{Y}u}\in \mathbb{C}[\mathcal{Y}u]).$$
Using \eqref{norm-sup-punif}, we also know that
\begin{equation}
\|a\|_{C^*(\mathcal{Y})}=
\sup_{u\in\mathcal{Y}^{(0)}}
\|\rho_u(a)\|_{C^*(\mathcal{Y}u)},\,\,\,\forall\,a\in C^*(\mathcal{Y})
\label{app-sup-u}
\end{equation}
where $\|\,.\,\|_{C^*(\mathcal{Y}u)}$ denotes the norm in the full group $C^*$-algebra of $\mathcal{Y}u$.

Consider now the set
$$\Xi=\{(u,\theta)\,:\,u\in \mathcal{Y}^{(0)},\,\theta\text{ positive definite function on the group }
\mathcal{Y}u\}.$$
Fix for the moment $(u,\theta)\in\Xi$.
Let $\varepsilon_\theta:C^*(\mathcal{Y}u)\to\mathbb{C}$ be the positive linear functional associated to
$\theta$, and let $\psi_{(u,\theta)}=\varepsilon_\theta\circ\rho_u$. 
As $\theta$ is a positive definite function on $\mathcal{Y}u$, which is a subgroup in $u\mathcal{G}u$, extend it
(as zero outside $\mathcal{Y}u$) to a a positive definite function $\theta'$ on $u\mathcal{G}u$, then use
Lemma \ref{app-ueta} to produce a linear positive map
$$\eta_{\theta'}:C_c(\mathcal{G})\ni f
\longmapsto \sum_{\gamma\in u\mathcal{G}u}\theta'(\gamma)f(\gamma)
=\sum_{\gamma\in \mathcal{Y}u}\theta(\gamma)f(\gamma)
\in\mathbb{C}.$$
Since $\eta_{\theta'}$ acts on
$C_c(\mathcal{G}^{(0)})$ as a multiple ($\theta(e)$ or zero) of the evaluation map
$f\longmapsto f(u)$,
by Lemma \ref{app-positivity}, $\eta_{\theta'}$ extends to a positive linear functional, hereafter denoted by
$\phi_{(u,\theta)}$ on $C^*(\mathcal{G})$.

Now, when we consider the canonical $*$-homomorphism
$\pi:C^*(\mathcal{Y})\to C^*(\mathcal{G})$ arising from
\eqref{appA-thm}, we have a family $\Sigma=\{\phi_{(u \theta)}\}_{(u,\theta)\in\Xi}$ of positive linear maps on $C^*(\mathcal{G})$, for which the 
associated family 
\begin{align*}\Sigma^\pi&=\{\phi_{(u,\theta)}\circ \pi\}_{(u,\theta)\in\Xi}=
\{\psi_{(u,\theta)}\}_{(u,\theta)\in\Xi}=\\
&=\{\psi\circ\rho_u\,:\,u\in\mathcal{Y}^{(0)},\,\psi\text{ linear positive on }C^*(\mathcal{Y}u)\}
\end{align*}
is clearly jointly faithful on $C^*(\mathcal{Y})$, by
 \eqref{app-sup-u}, so the desired injectivity of $\pi$ follows from
Remark \ref{embed-rem}.
\end{proof}

\bibliographystyle{plain}

\end{document}